\title{Winding homology of knotoids}
\author{Deniz Kutluay}
\address{Department of Mathematics, Indiana University, Bloomington, IN 47405}
\email{dkutluay@indiana.edu}
\newtheorem{thm}{Theorem}[section]
\newtheorem{cor}[thm]{Corollary}
\newtheorem{lem}[thm]{Lemma}
\newtheorem{prop}[thm]{Proposition}
\newtheorem{ex}[thm]{Example}
\theoremstyle{definition}
\newtheorem{defn}[thm]{Definition}
\theoremstyle{remark}
\newtheorem{rem}[thm]{Remark}
\newtheorem{notation}[thm]{Notation}
\newtheorem{convention}[thm]{Convention}
\newcommand{\Q}{{\mathbb Q}}
\newcommand{\Z}{{\mathbb Z}}
\newcommand{\R}{{\mathbb R}}
\newcommand{\CP}{{\mathbb C}}
\newcommand{\A}{{\mathcal A}}
\newcommand{\C}{{\mathcal C}}
\newcommand{\V}{{\mathcal V}}
\newcommand{\NegCros}{{\includegraphics[scale=0.12]{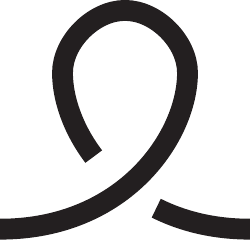}}}
\newcommand{\Arc}{{\includegraphics[scale=0.12]{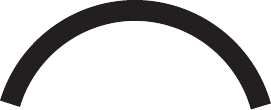}}}
\newcommand{\NoCirc}{{\includegraphics[scale=0.12]{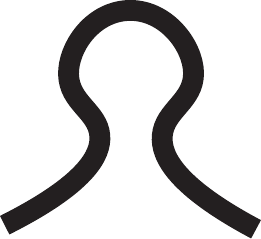}}}
\newcommand{\WithCirc}{{\includegraphics[scale=0.12]{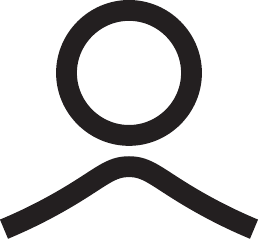}}}
\newcommand{\ArcSeg}{{\includegraphics[scale=0.12]{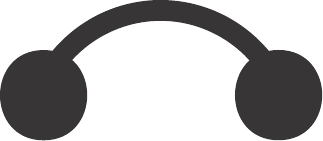}}}
\newcommand{\NoCircSeg}{{\includegraphics[scale=0.12]{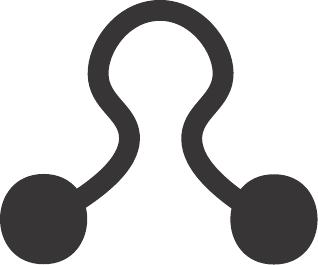}}}
\newcommand{\WithCircSeg}{{\includegraphics[scale=0.12]{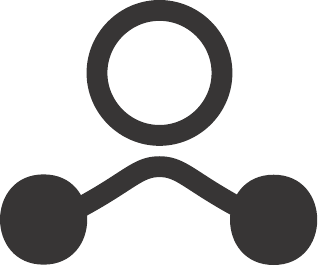}}}
\newcommand{\myfrac}[2]{\genfrac{}{}{0pt}{2}{#1}{#2}}
\newcommand{\RIIcros}{\vcenter{\hbox{\includegraphics[scale=0.18]{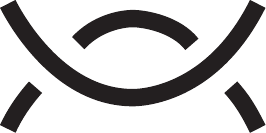}}}}
\newcommand{\RIInocros}{\vcenter{\hbox{\includegraphics[scale=0.17]{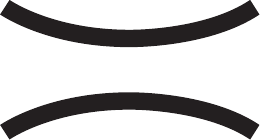}}}}
\newcommand{\LeftBump}{\vcenter{\hbox{\includegraphics[scale=0.15]{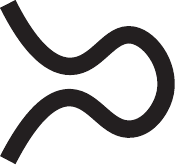}}}}
\newcommand{\RightBump}{\reflectbox{$\LeftBump$}}
\newcommand{\RightSoft}{\vcenter{\hbox{\includegraphics[scale=0.15]{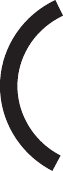}}}}
\newcommand{\LeftSoft}{\reflectbox{$\RightSoft$}}
\newcommand{\LeftBRightS}{\vcenter{\hbox{\includegraphics[scale=0.25]{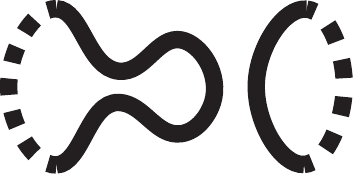}}}}
\newcommand{\SimpCirc}{\vcenter{\hbox{\includegraphics[scale=0.15]{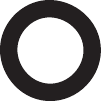}}}}
\newcommand{\TwoWaves}{{\vcenter{\hbox{\includegraphics[scale=0.15]{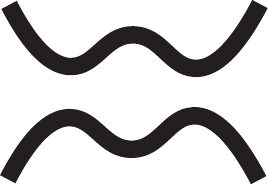}}}}}
\newcommand{\LeftSCircRightS}{\vcenter{\hbox{\includegraphics[scale=0.25]{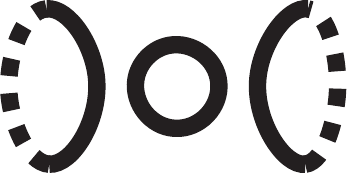}}}}
\newcommand{\WavesClosed}{\vcenter{\hbox{\includegraphics[scale=0.25]{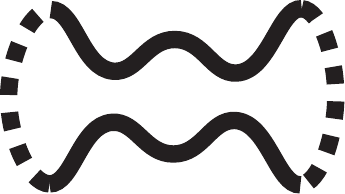}}}}
\newcommand{\CaseIILBRS}{\vcenter{\hbox{\includegraphics[scale=0.25]{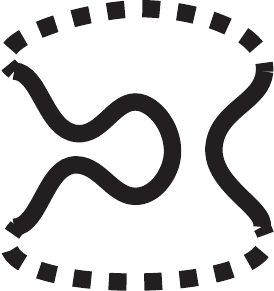}}}}
\newcommand{\CaseIILSCRS}{\vcenter{\hbox{\includegraphics[scale=0.25]{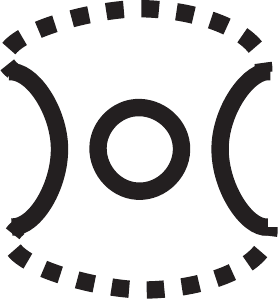}}}}
\newcommand{\CaseIITwoWaves}{\vcenter{\hbox{\includegraphics[scale=0.25]{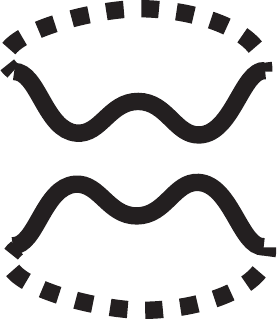}}}}
\newcommand{\CaseIIIoo}{\vcenter{\hbox{\includegraphics[scale=0.25]{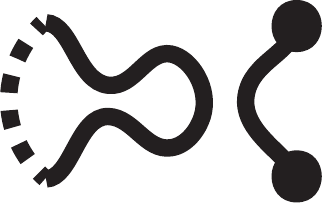}}}}
\newcommand{\CaseIIIio}{\vcenter{\hbox{\includegraphics[scale=0.25]{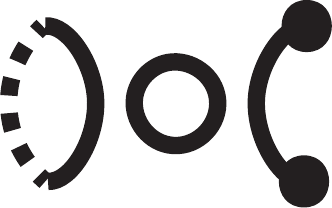}}}}
\newcommand{\CaseIIIoi}{\vcenter{\hbox{\includegraphics[scale=0.25]{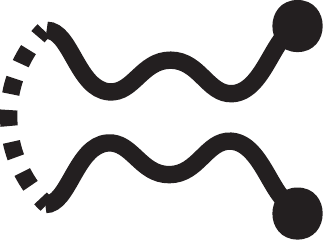}}}}
\newcommand{\CaseIIIii}{\vcenter{\hbox{\includegraphics[scale=0.25]{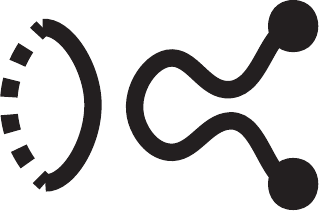}}}}
\newcommand{\RightSoftDot}{\vcenter{\hbox{\includegraphics[scale=0.15]{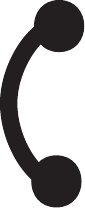}}}}
\newcommand{\RightBumpDot}{\vcenter{\hbox{\includegraphics[scale=0.15]{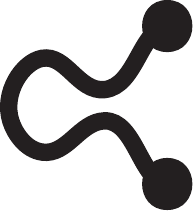}}}}
\newcommand{\TwoWavesDot}{\vcenter{\hbox{\includegraphics[scale=0.15]{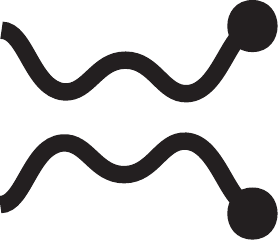}}}}
\newcommand{\CaseIVoo}{\vcenter{\hbox{\includegraphics[scale=0.25]{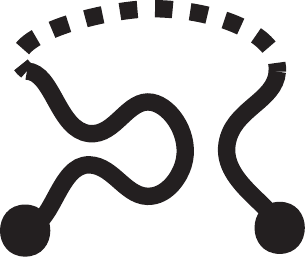}}}}
\newcommand{\CaseIVio}{\vcenter{\hbox{\includegraphics[scale=0.25]{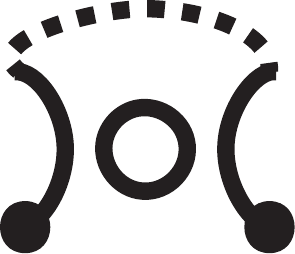}}}}
\newcommand{\CaseIVoi}{\vcenter{\hbox{\includegraphics[scale=0.25]{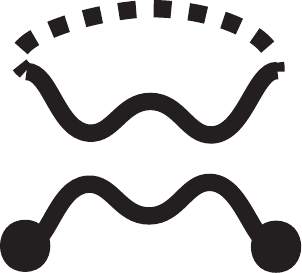}}}}
\newcommand{\CaseIVii}{\vcenter{\hbox{\includegraphics[scale=0.25]{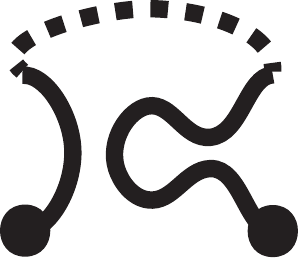}}}}
\newcommand{\LeftBumpSubDot}{\vcenter{\hbox{\includegraphics[scale=0.15]{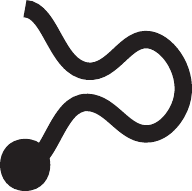}}}}
\newcommand{\LeftSoftSubDot}{\vcenter{\hbox{\includegraphics[scale=0.15]{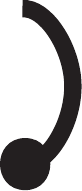}}}}
\newcommand{\TwoWavesSubDot}{\vcenter{\hbox{\includegraphics[scale=0.15]{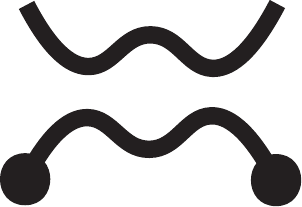}}}}
\newcommand{\RIInocrosDot}{\vcenter{\hbox{\includegraphics[scale=0.17]{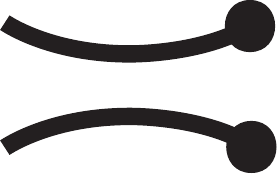}}}}
\newcommand{\RIInocrosSubDot}{\vcenter{\hbox{\includegraphics[scale=0.17]{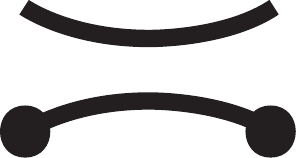}}}}
\newcommand{\CaseVoo}{\vcenter{\hbox{\includegraphics[scale=0.25]{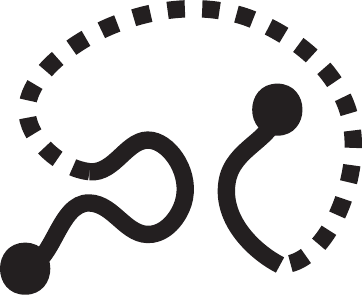}}}}
\newcommand{\CaseVio}{\vcenter{\hbox{\includegraphics[scale=0.25]{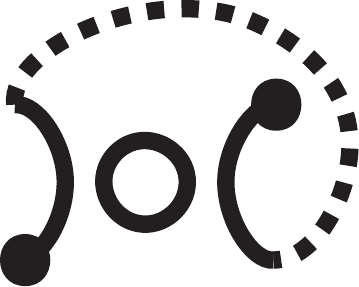}}}}
\newcommand{\CaseVoi}{\vcenter{\hbox{\includegraphics[scale=0.25]{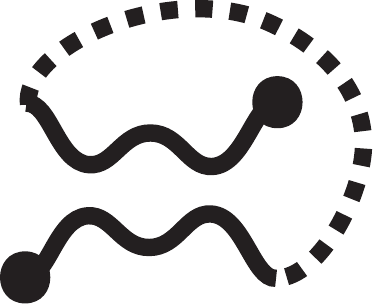}}}}
\newcommand{\CaseVii}{\vcenter{\hbox{\includegraphics[scale=0.25]{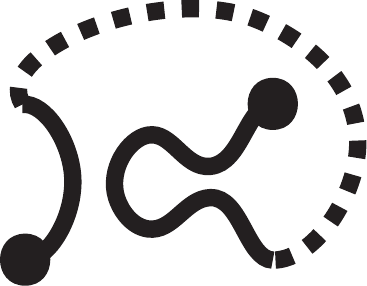}}}}
\newcommand{\TwoWavesDiagDot}{\vcenter{\hbox{\includegraphics[scale=0.15]{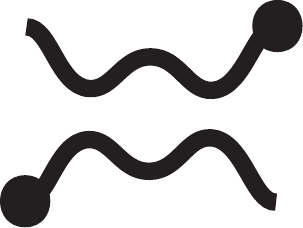}}}}
\newcommand{\RIInocrosDiagDot}{\vcenter{\hbox{\includegraphics[scale=0.17]{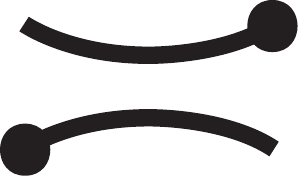}}}}
\newcommand{\Dooo}{\vcenter{\hbox{\includegraphics[scale=0.05]{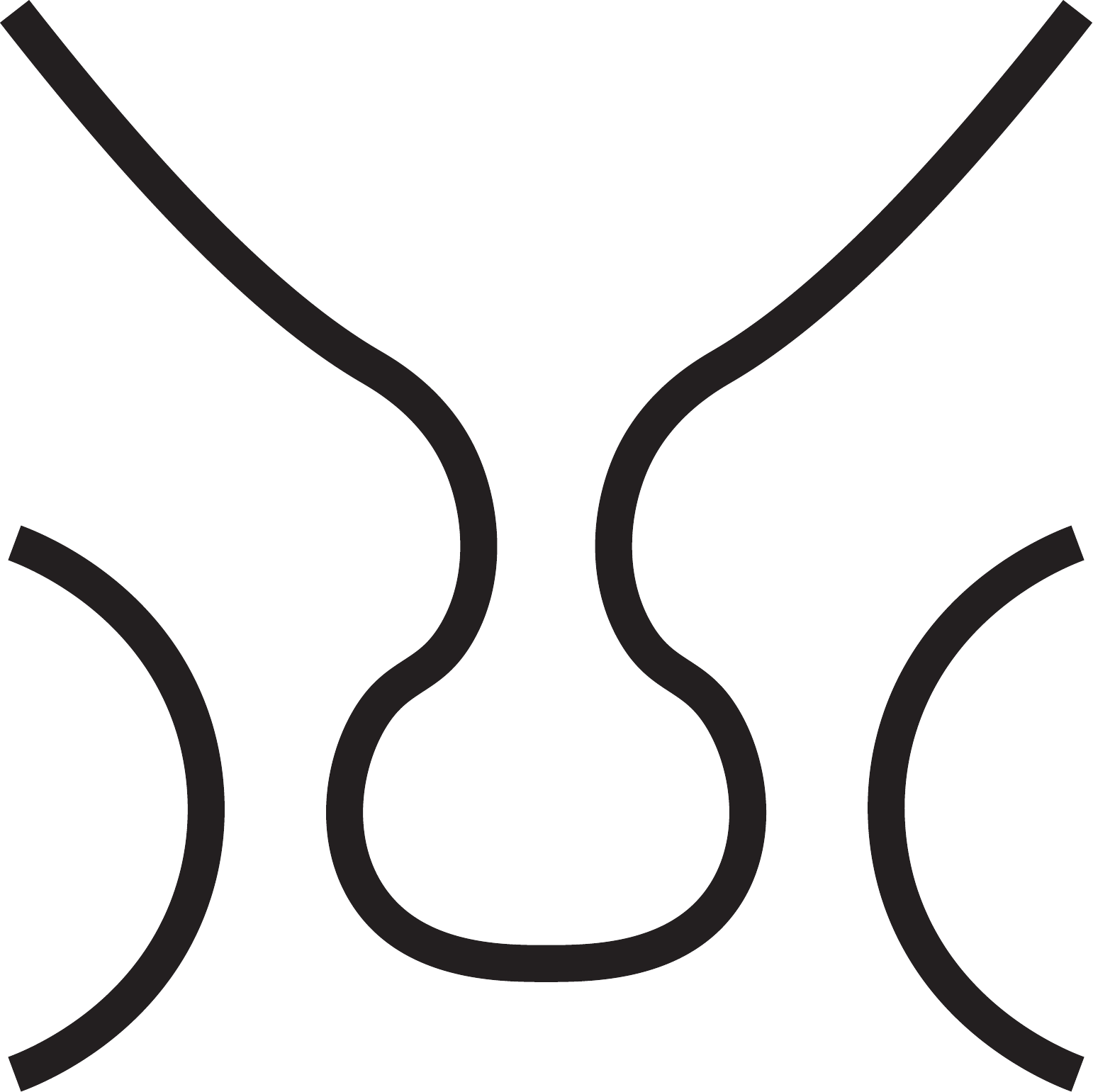}}}}
\newcommand{\Dioo}{\vcenter{\hbox{\includegraphics[scale=0.05]{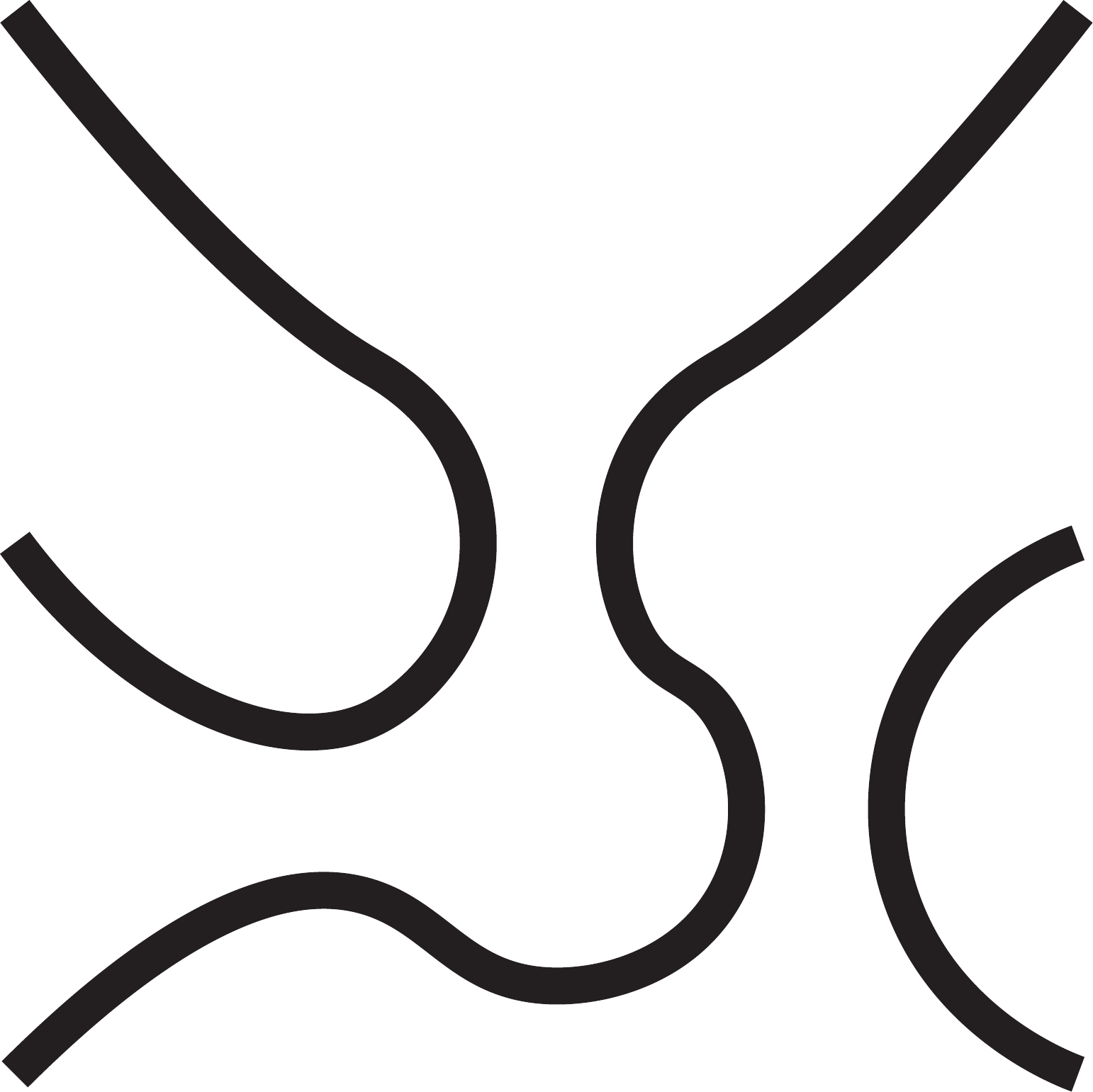}}}}
\newcommand{\Doio}{\vcenter{\hbox{\includegraphics[scale=0.05]{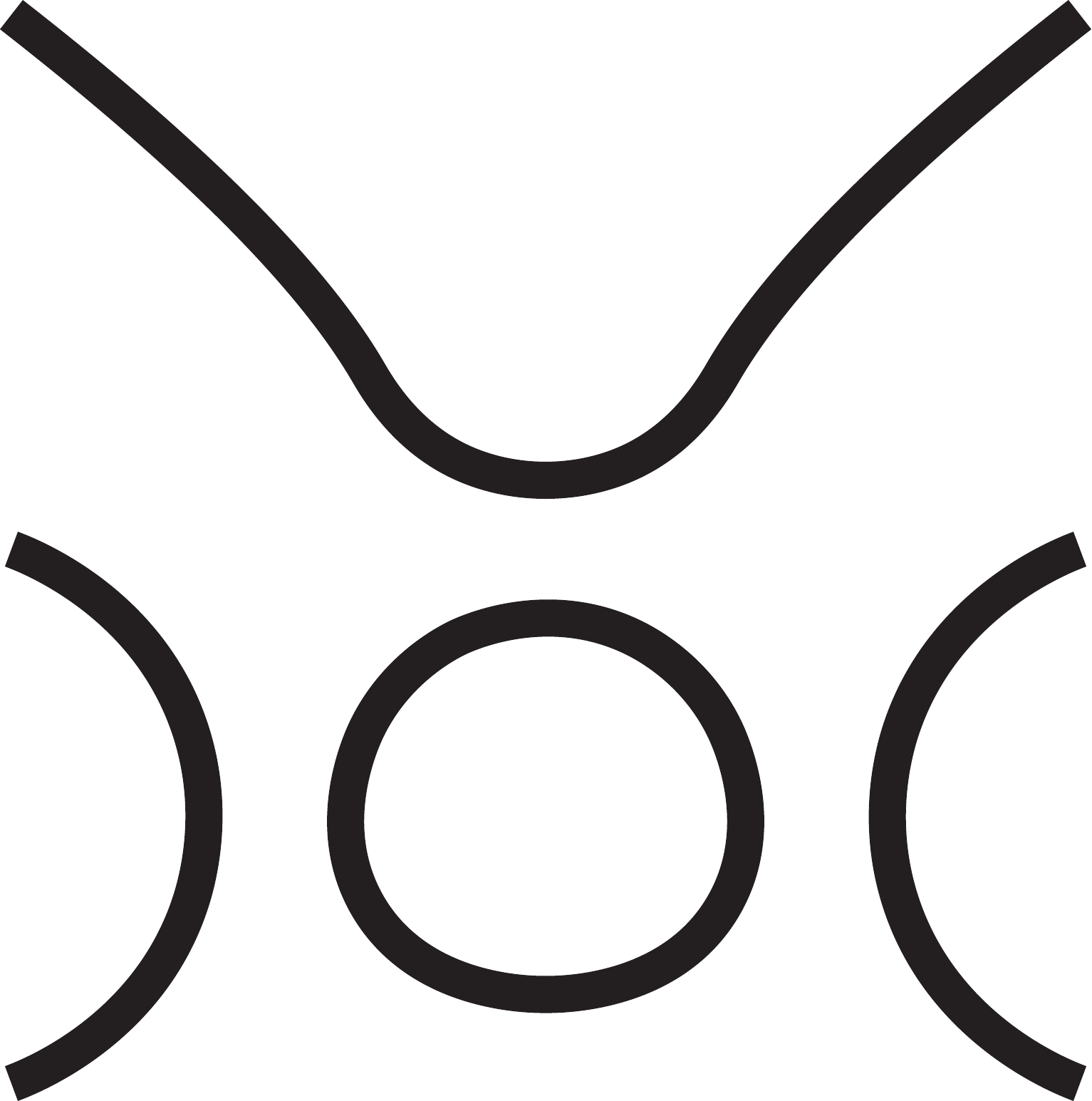}}}}
\newcommand{\Dooi}{\vcenter{\hbox{\includegraphics[scale=0.05]{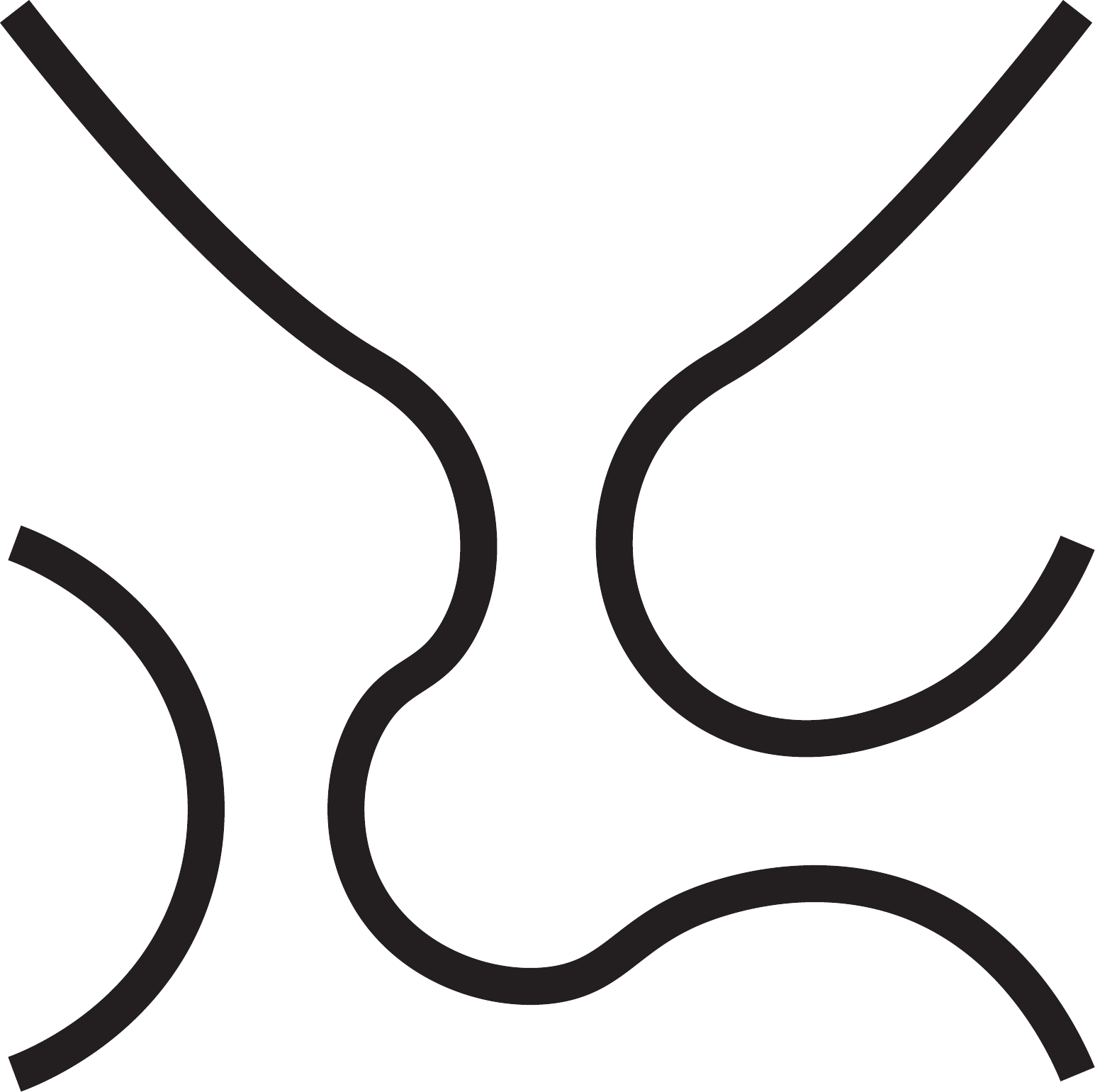}}}}
\newcommand{\Diio}{\vcenter{\hbox{\includegraphics[scale=0.05]{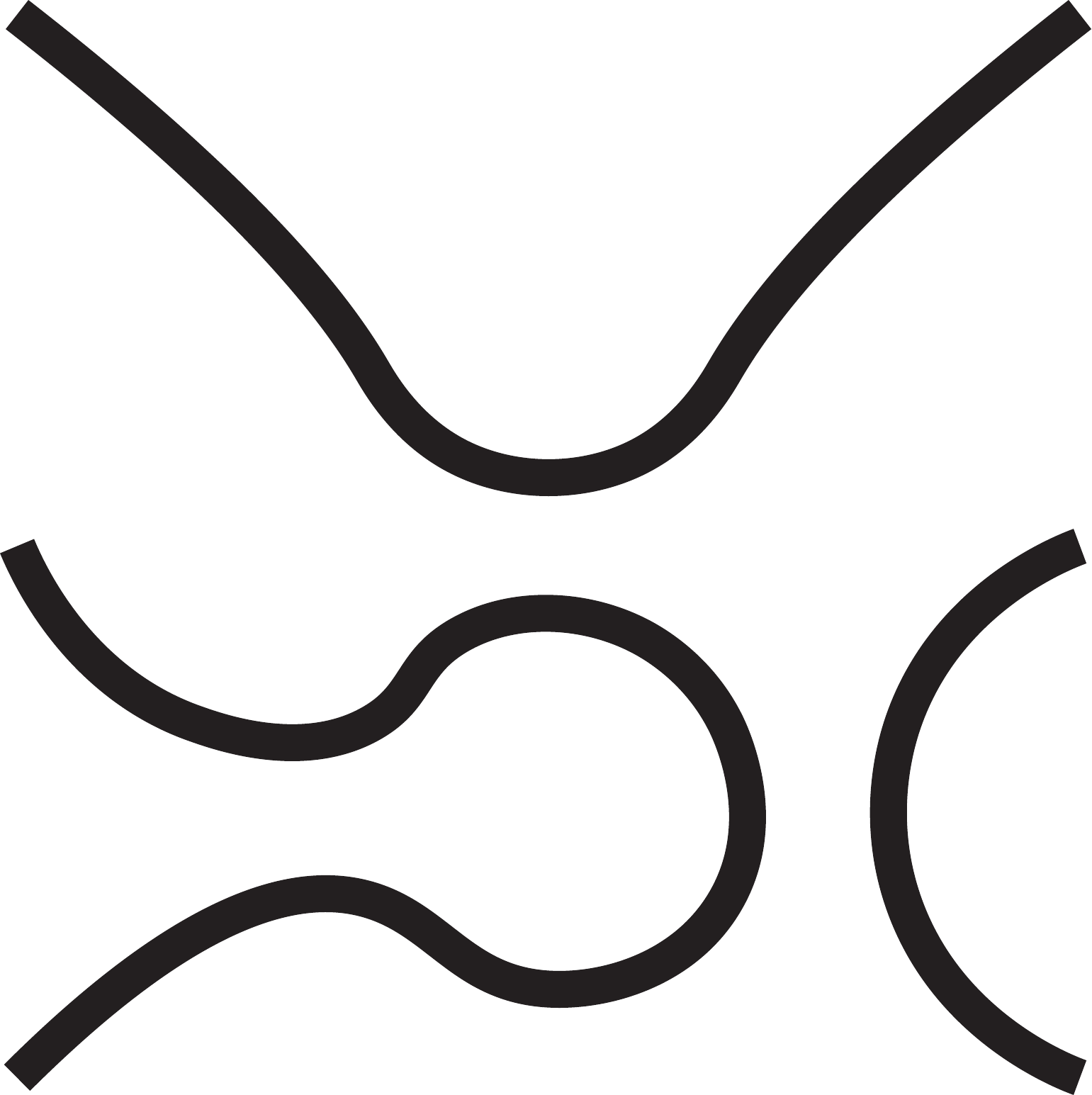}}}}
\newcommand{\Dioi}{\vcenter{\hbox{\includegraphics[scale=0.05]{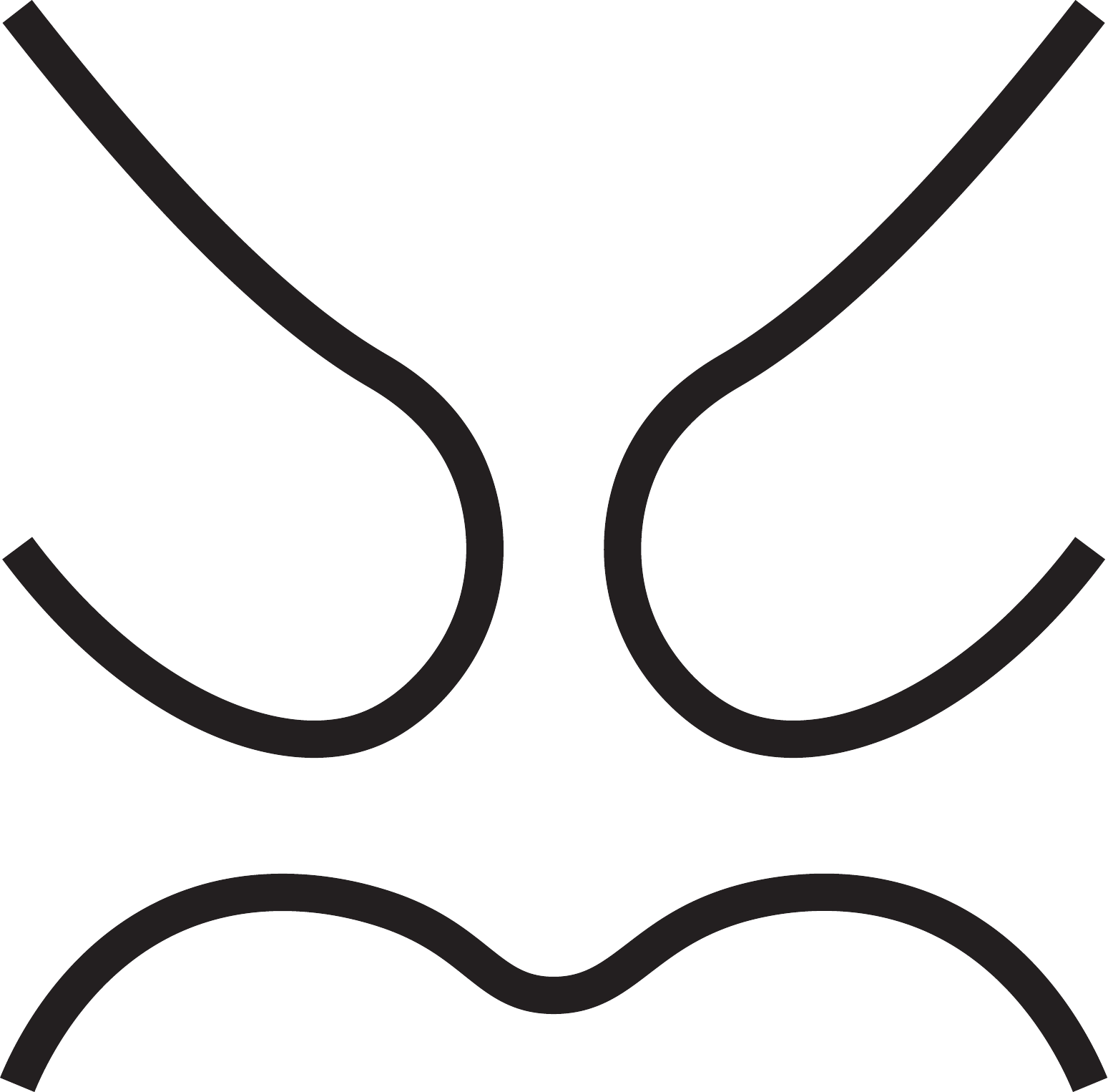}}}}
\newcommand{\Doii}{\vcenter{\hbox{\includegraphics[scale=0.05]{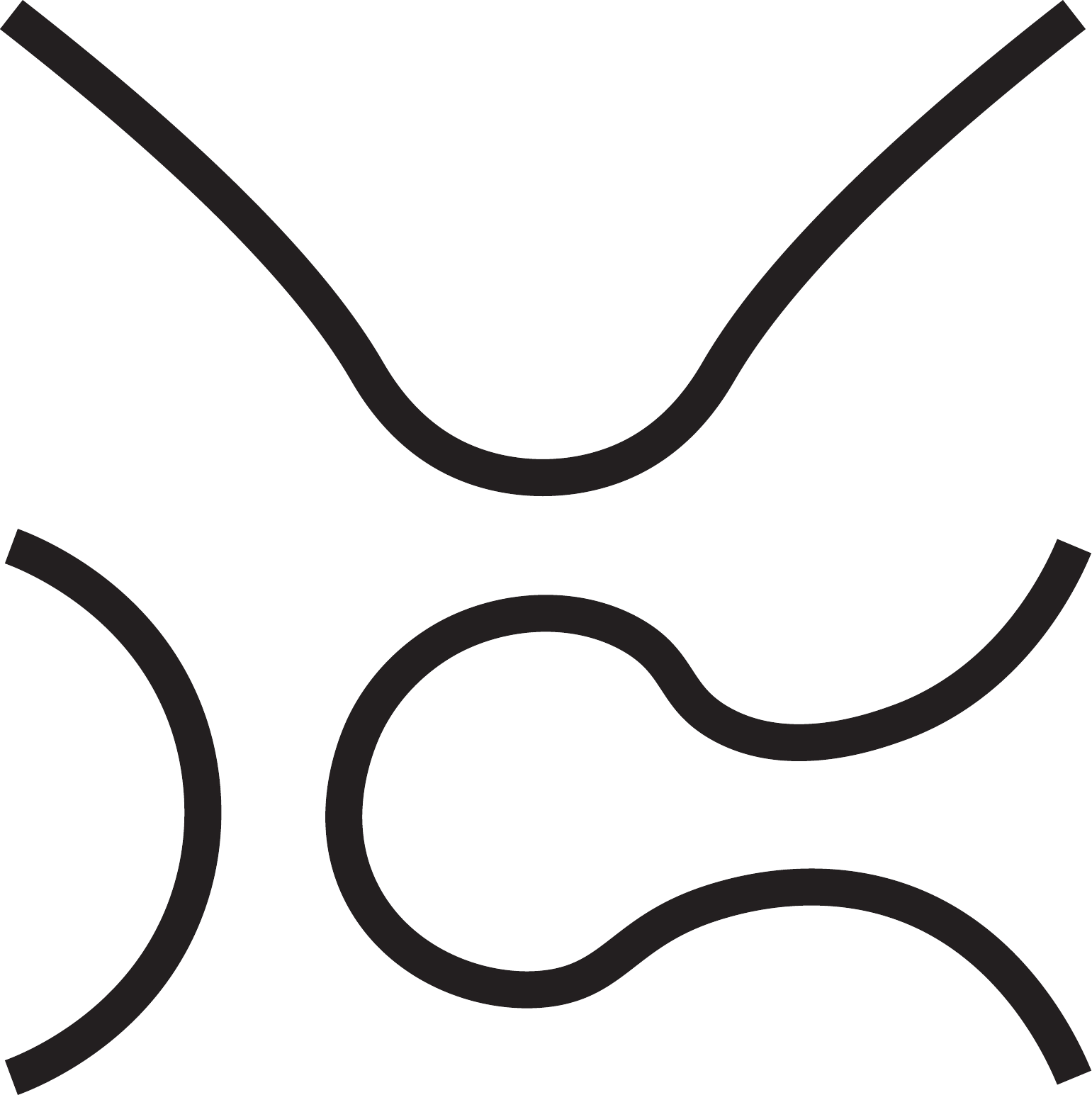}}}}
\newcommand{\Diii}{\vcenter{\hbox{\includegraphics[scale=0.05]{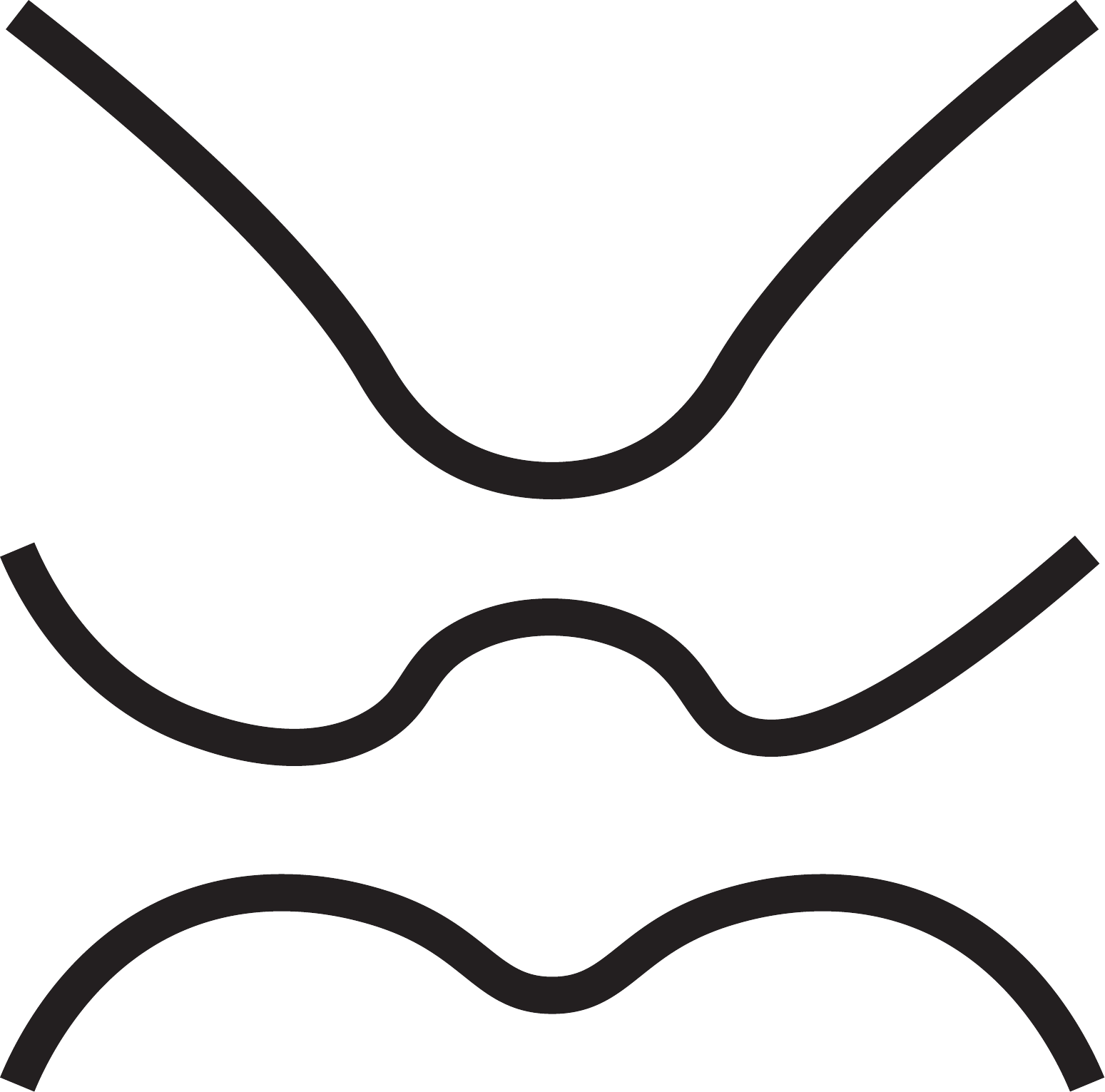}}}}
\newcommand{\Dpooo}{\raisebox{0.2cm}{\rotatebox{180}{$\Dooo$}}}
\newcommand{\Dpioo}{\raisebox{0.2cm}{\rotatebox{180}{$\Dioo$}}}
\newcommand{\Dpoio}{\raisebox{0.2cm}{\rotatebox{180}{$\Doio$}}}
\newcommand{\Dpooi}{\raisebox{0.2cm}{\rotatebox{180}{$\Dooi$}}}
\newcommand{\Dpiio}{\raisebox{0.2cm}{\rotatebox{180}{$\Diio$}}}
\newcommand{\Dpioi}{\raisebox{0.2cm}{\rotatebox{180}{$\Dioi$}}}
\newcommand{\Dpoii}{\raisebox{0.2cm}{\rotatebox{180}{$\Doii$}}}
\newcommand{\Dpiii}{\raisebox{0.2cm}{\rotatebox{180}{$\Diii$}}}
\newcommand{\DiooWithLetters}[3]{
\raisebox{-.2cm}{\begin{picture}(20,20)
\put(0,0){\includegraphics[scale=.04]{D100}}
\put(-5,12){\tiny{$#1$}}
\put(0,-5){\tiny{$#2$}}
\put(15,-5){\tiny{$#3$}}
\end{picture}}
}
\newcommand{\DoioWithLetters}[4]{
\raisebox{-.2cm}{\begin{picture}(20,20)
\put(0,0){\includegraphics[scale=.04]{D010}}
\put(-7,-5){\tiny{$#1$}}
\put(2,18){\tiny{$#2$}}
\put(7.1,2.6){\scalebox{.55}{$#3$}}
\put(19,2){\tiny{$#4$}}
\end{picture}}
}
\newcommand{\DpiooWithLetters}[3]{
\raisebox{-.2cm}{\begin{picture}(20,20)
\put(0,0){\raisebox{.67cm}{\rotatebox{180}{\includegraphics[scale=.04]{D100}}}}
\put(-5,13){\tiny{$#1$}}
\put(0,-4){\tiny{$#2$}}
\put(15,-4){\tiny{$#3$}}
\end{picture}}
}
\newcommand{\DpoioWithLetters}[4]{
\raisebox{-.2cm}{\begin{picture}(20,20)
\put(0,0){\raisebox{.65cm}{\rotatebox{180}{\includegraphics[scale=.04]{D010}}}}
\put(2,-4){\tiny{$#1$}}
\put(19,12){\tiny{$#2$}}
\put(7.1,11.63){\scalebox{.55}{$#3$}}
\put(-5,12){\tiny{$#4$}}
\end{picture}}
}
\newcommand{\DioiWithLetters}[3]{
\raisebox{-.2cm}{\begin{picture}(20,20)
\put(0,0){\includegraphics[scale=.04]{D101}}
\put(-5,11){\tiny{$#1$}}
\put(2,-5){\tiny{$#2$}}
\put(19,11){\tiny{$#3$}}
\end{picture}}
}
\newcommand{\DoiiWithLetters}[3]{
\raisebox{-.2cm}{\begin{picture}(20,20)
\put(0,0){\includegraphics[scale=.04]{D011}}
\put(-8,-5){\tiny{$#1$}}
\put(2,20){\tiny{$#2$}}
\put(19,2){\tiny{$#3$}}
\end{picture}}
}
\newcommand{\DpioiWithLetters}[3]{
\raisebox{-.2cm}{\begin{picture}(20,20)
\put(0,0){\raisebox{.65cm}{\rotatebox{180}{\includegraphics[scale=.04]{D101}}}}
\put(-8,-4){\tiny{$#1$}}
\put(2,20){\tiny{$#2$}}
\put(19,-4){\tiny{$#3$}}
\end{picture}}
}
\newcommand{\DpoiiWithLetters}[3]{
\raisebox{-.2cm}{\begin{picture}(20,20)
\put(0,0){\raisebox{.65cm}{\rotatebox{180}{\includegraphics[scale=.04]{D011}}}}
\put(-6,12){\tiny{$#1$}}
\put(1.5,-4){\tiny{$#2$}}
\put(19,12){\tiny{$#3$}}
\end{picture}}
}
\numberwithin{equation}{section}
\numberwithin{table}{section}
\begin{document}

\begin{abstract}
Knotoids were introduced by V. Turaev as open-ended knot-type diagrams that generalize knots. Turaev defined a two-variable polynomial invariant of knotoids which encompasses a generalization of the Jones knot polynomial to knotoids. We define a triply-graded homological invariant of knotoids categorifying the Turaev polynomial, called winding homology. Forgetting one of the three gradings gives a generalization of the Khovanov knot homology to knotoids.
\end{abstract}

\maketitle


\section{Introduction}
\subsection{Summary}
Turaev~\cite{Turaev} introduced the theory of knotoids in 2010. Knotoids are presented by knot-like diagrams that are generic immersions of the unit interval into a surface, together with the under/overpassing information at double points. Knotoids are defined as the equivalence classes of knotoid diagrams under isotopy and the Reidemeister moves; see ~\cite{GKL} for a survey and ~\cite{GDS} for comprehensive tables of knotoids. Intuitively, knotoids can be considered as open-ended knot-type pictures up to an appropriate equivalence. It is shown in ~\cite{Turaev} that knotoids in $S^2$ generalize knots in $S^3$.

Turaev generalized the Jones knot polynomial to knotoids in $S^2$. Moreover, Turaev introduced a two-variable polynomial invariant of knotoids extending the Jones polynomial.

On the other hand, for an oriented link diagram, Khovanov~\cite{Khovanov} defined a bigraded chain complex whose homology is an invariant of the link. This invariant is a categorification of the Jones polynomial in the sense that the graded Euler characteristic of the Khovanov homology is the Jones polynomial. The Khovanov homology is a stronger invariant of knots than the Jones polynomial.

In this paper, we generalize the Khovanov knot homology to knotoids. We show that the resulting Khovanov homology of knotoids is stronger than the Jones polynomial of knotoids. Next, we categorify the two-variable Turaev polynomial to a triply-graded homological invariant of knotoids. We call this homological invariant \emph{winding homology} as its definition substantially uses winding numbers of closed curves in $\R^2$. In particular, forgetting one of the gradings in the winding homology yields our Khovanov homology of knotoids.\begin{figure}[hbt!]
    \centering
    \begin{tikzcd}[row sep=2cm, column sep=2cm]
        \text{Jones knotoid polynomial} \arrow[r] \arrow[d] & \text{Turaev polynomial} \arrow[d] \\
        \text{Khovanov knotoid homology} \arrow[r] \arrow[ru,no head, loosely dotted, thick] & \text{Winding homology}
    \end{tikzcd}
    \caption{Solid arrows stand for \lq\lq stronger" invariant of knotoids in $S^2$. The diagonal dotted line means that neither is stronger than the other one.}
    \label{fig:allinvariantscompared}
\end{figure}
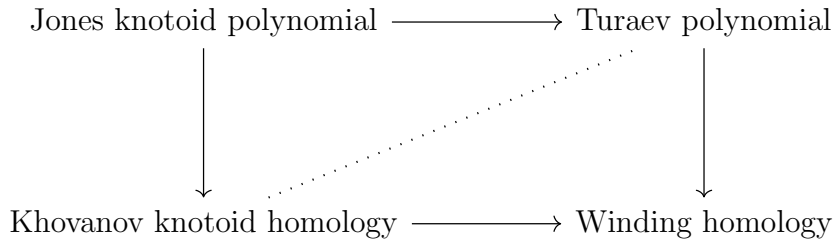 We provide examples of knotoid pairs illustrating the strength of the invariants, as claimed in Figure ~\ref{fig:allinvariantscompared}. We also show that the winding homology is a stronger invariant than the Turaev polynomial and the Khovanov knotoid homology combined. For knots, the winding homology is equivalent to the usual Khovanov homology.

Lastly, we introduce the winding potential function of smooth closed curves in $\R^2$, and use this function to obtain refined polynomial invariants of knotoids in $\R^2$. It is not surprising to have refined invariants for knotoids in $\R^2$ as there are more knotoids in $\R^2$.

\subsection{Organization}

In Section ~\ref{sec.knotoids}, we give a brief introduction to knotoids, and review the Turaev polynomial. In Section ~\ref{sec.categorification}, we define a triply-graded chain complex for a knotoid diagram in $S^2$. Invariance of the homology of this chain complex under the Reidemeister moves, and independence from the choices made in the definition are proved in Section ~\ref{sec.invariance}. In Section ~\ref{sec.computations}, we provide computational results and examples. In Section ~\ref{sec:r2knotoids}, we give refinements of Turaev's polynomials for knotoids in $\R^2$.

\subsection{Acknowledgements} I am indebted to my advisor at Indiana University, Vladimir Turaev, for encouraging me to pursue this problem, and for all his help. I would like to thank Matthew Hogancamp, Paul Kirk, Charles Livingston, and Dylan Thurston for many valuable conversations and comments. The program computing the winding homology of knotoids is built on top of the code of Bar-Natan's program computing the Khovanov homology of knots. The computer search for the examples was run on the supercomputer Carbonate of Indiana University.


\section{Knotoids and the Turaev polynomial}\label{sec.knotoids}
\subsection{Knotoids}\label{subsec:knotoids}
We review the essentials of the theory of knotoids; see \cite{Turaev}, \cite{GKL}, \cite{GDS} for details. A \emph{knotoid diagram} in a surface $\Sigma$ is an immersion $K:[0,1]\rightarrow\Sigma$ having only double transversal points and over/under information for each crossing. The images of 0 and 1 under the immersion are called the \emph{leg} and \emph{head} of $K$, respectively. A \emph{multi-knotoid diagram} is defined in the same way except possibly with extra closed components, but still with only one segment component. Two (multi-)knotoid diagrams are \emph{isotopic} if there is an ambient isotopy of $\Sigma$ that transforms one knotoid into the other preserving their orientations. Two (multi-)knotoid diagrams are \emph{equivalent} if they are isotopic or can be transformed one into the other by Reidemeister moves. Each equivalence class under this relation is called a \emph{knotoid}. Note that passing an arc of $K$ over the head or leg of $K$, as shown in Figure~\ref{fig:omegamoves} and denoted by $\Omega_{\pm}$, is not allowed under equivalence. Similarly, passing an arc under the head/leg, denoted by $\overline{\Omega}_{\pm}$, is also not allowed. In this paper, we will mostly consider knotoids in $S^2$, and when specified, knotoids in $\R^2$.
\begin{figure}[hbt!]
    \centering
    \includegraphics[scale=0.65]{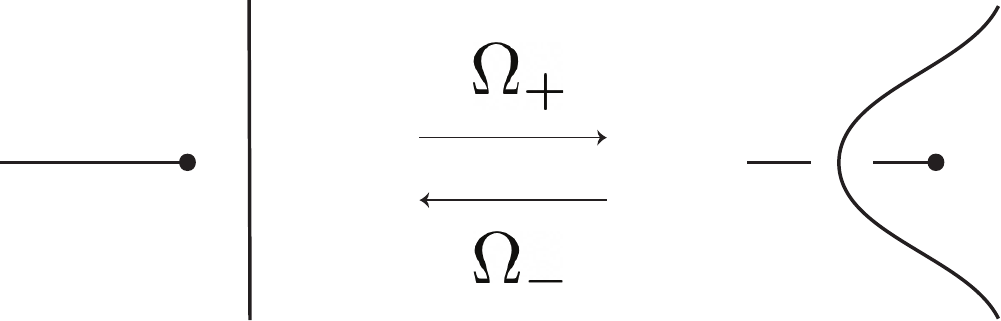}
    \caption{Forbidden $\Omega_{\pm}$ moves for passing an arc over the head/leg.}
    \label{fig:omegamoves}
\end{figure}
For a knotoid diagram $K$ in $S^2$, an immersed, oriented arc $\alpha$ (the red dashed arc in Figure~\ref{fig:Kplusminus}) from the leg to the head of $K$ is called a \emph{shortcut}. We write $\alpha^r$ for $\alpha$ with reversed orientation. When $\alpha$ is embedded, $K\cup \alpha^r$ specifies a knot in $S^3$ if $\alpha$ is assumed to pass under $K$ at each crossing. This knot is written as $K_-$; see Figure~\ref{fig:Kplusminus}. If $\alpha$ passes over $K$ at each crossing, then the resulting knot is written as $K_+$.
\begin{figure}[hbt!]
    \centering
    \includegraphics[scale=.35]{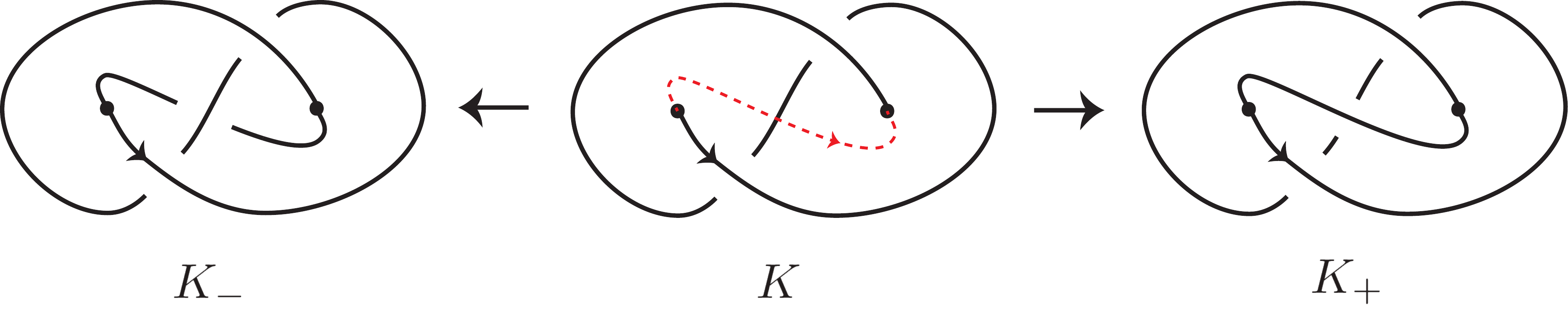}
    \caption{Maps from knotoids in $S^2$ to knots in $S^3$.}
    \label{fig:Kplusminus}
\end{figure}

In the other direction, given an oriented knot diagram $\kappa$ in $S^2$, deleting a small arc yields a knotoid diagram, denoted $\kappa^\bullet$. It turns out (see ~\cite{Turaev}) that different choices of the deleted arc give equivalent knotoid diagrams. Then, it follows that the knotoid $\kappa^\bullet$ is well-defined in $S^2$. Since $(\kappa^\bullet)_\pm=\kappa$, the set of knots injects into knotoids, and thus, can be considered a subset. Knotoids of the form $\kappa^\bullet$ will be referred as knots. A knotoid, that is not a knot, is called a \emph{pure knotoid}.

The set of (multi-)knotoids has a well-defined multiplication (see ~\cite{Turaev}) as follows: For two (multi-)knotoids $K_1$ and $K_2$, pick small disk neighborhood $N_1$ of the head of $K_1$, and a small disk neighborhood $N_2$ of the leg of $K_2$. Then gluing $S^2-N_1$ to $S^2-N_2$ along the boundary consistent with the knotoids and their orientations gives another (multi-)knotoid in $S^2$, denoted by $K_1\cdot K_2$.

\subsection{The Turaev polynomial}
Let $K$ be a knotoid on a surface $\Sigma$. A \emph{state} of $K$ is obtained by resolving the crossings of $K$ by 0 and 1-smoothings; see Figure ~\ref{fig:smoothings}.
\begin{figure}[hbt!]
    \centering
    \includegraphics[scale=.55]{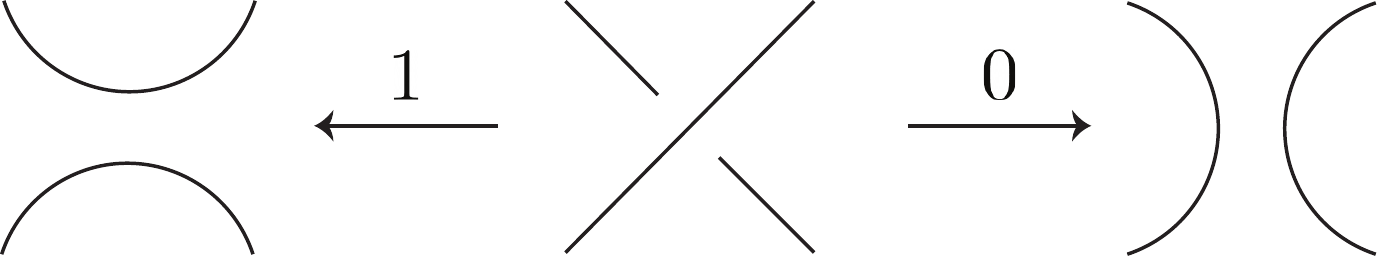}
    \caption{0 and 1-smoothings.}
    \label{fig:smoothings}
\end{figure}
In analogy with the Kauffman bracket polynomial of knots (see ~\cite{Kauffman}) the \emph{Kauffman bracket polynomial} of the knotoid $K$ is defined as
\begin{equation}
    \langle K\rangle =\sum_{s\in S(K)}A^{\sigma_s}(-A^2-A^{-2})^{|s|-1}\in\Z[A^{\pm 1}],
\end{equation}
where $S(K)$ is the set of all states of $K$, $|s|$ is the number components of $s$, and $\sigma_s$ is the number of 0-smoothings minus the number of 1-smoothings. Normalizing the Kauffman bracket, the \emph{Jones polynomial} of the knotoid $K$ is written as 
\begin{equation}
    J_K(A)=\langle K \rangle_\circ=(-A^3)^{-wr(K)}\langle K \rangle,
\end{equation}
where $wr(K)=n_+-n_-$, and $n_\pm$ is the number of positive/negative crossings.

For a knotoid $K\subset S^2$, let $\alpha$ be a shortcut oriented from the leg of $K$ to the head of $K$. Then the \emph{Turaev polynomial} (see ~\cite{Turaev}) is given by
\begin{equation}\label{equ:extendedbracket}
    T_K(A,u)=(-A^3)^{-wr(K)}u^{-K\cdot \alpha}\sum_{s\in S(K)}A^{\sigma_s}u^{k_s\cdot \alpha}(-A^2-A^{-2})^{|s|-1}\in\Z[A^{\pm 1},u^{\pm 1}].
\end{equation}
The term $k_s$ denotes the segment component of the state $s$, oriented from the leg to the head. Then $K\cdot \alpha$ (and $k_s\cdot \alpha$) denote the number of times $K$ (and $k_s$) crosses $\alpha$ from right to left minus the number of times from left to right. The Laurent polynomial $T_K$ is independent of the choice of $\alpha$, and invariant under Reidemeister moves. The substitution $u=1$ in the Turaev polynomial recovers the Jones polynomial of the knotoid $K$:
\begin{equation}\label{equ:jones4knotoids}
    T_K(A,u=1)=J_K(A).
\end{equation}
Using the substitution $q=-A^{-2}$, one can rewrite ~\eqref{equ:extendedbracket} as
\begin{equation}\label{equ:extendedJones}
    T_K(q,u)=(-1)^{n_-}q^{n_+-2n_-}\sum_{s\in S(K)} u^{\mu(s)}(-q)^{|\!|s|\!|}(q+q^{-1})^{|s|-1},
\end{equation}
where $|\!|s|\!|$ is the number of 1-smoothings of $s$, and $\mu(s)=k_s\cdot \alpha-K\cdot \alpha$. Note that $k_s\cdot \alpha$ is equal to $(k_s\cup_i c_s^i)\cdot \alpha$ for any arbitrary assignment of orientations on closed components $\{c_s^i\}_i$ of $s$. We may assume that all smoothings take place away from the intersection points between $K$ and $\alpha$. Thus, the contribution of each intersection to $(k_s\cup_i c_s^i)\cdot \alpha$ and $K\cdot \alpha$ is either the same or differ by $\pm2$, so $\mu(s)$ is an even integer. Since $wr(K)$ and $\sigma(s)$ have the same parity, it is, in fact, the case that $T_K(q,u) \in \Z[q^{\pm 1},u^{\pm 2}]$.

The following substitutions recover the Jones polynomials of the knots $K_-$, $K_+$ from the Turaev polynomial of the knotoid $K$ (see ~\cite{Turaev}):
\begin{align}
    T_K(q, u=-i\cdot q^{-\frac{3}{2}})&=J_{K_-}(q),\\
    T_K(q, u=i\cdot q^{\frac{3}{2}})&=J_{K_+}(q).
\end{align}


\section{Winding Homology}\label{sec.categorification}
\noindent In this section, we give a categorification of the Turaev polynomial $T_K$. For simplicity, we work over the field $\Q$, rather than over $\Z$ or the polynomial ring $\Z[c]$ as in ~\cite{Khovanov}. With a little more work, the constructions of this section can be carried out over $\Z$.

\subsection{Chain groups} 
For an oriented (multi-)knotoid diagram $K$ with $n$ crossings together with a choice of shortcut $\alpha$, let  $\C=\{c_1,c_2,\dots,c_n\}$ be an ordered set of crossings with the ordering $c_1<c_2<\dots<c_n$, and $\V$ be the $\Q$-vector space with the ordered basis $\C$. A complete resolution diagram for $K$ is specified by a \emph{smoothing} function $s:\C\rightarrow \{0,1\}$ which will sometimes be referred as a \emph{state}. Let $\V_s$ be the subspace of $\V$ with basis $s^{-1}(1)$ endowed with an ordering inherited from $\C$, so that $|\!|s|\!|=|s^{-1}(1)|$. The exterior power $\bigwedge^{|\!|s|\!|}\V_s$ is a one-dimensional $\Q$-vector space.

Let $\A=\Q[X]/(X^2)$ be the graded $\Q$-vector space with $\deg(1)=1$ and $\deg(X)=-1$. For any tensor powers of $\A$, the degree is extended additively, that is, $\deg(v\otimes w)=\deg(v)+\deg(w)$. We define the $i$-th chain group as follows
\begin{equation}\label{equ:windingithchaingroup}
    C_i(K)=\bigoplus_{\underset{|\!|s|\!|=i+n_-}{s\in S(K)}} \A^{\otimes |s|-1}\otimes X\A \otimes \bigwedge\nolimits^{|\!|s|\!|}\V_s,
\end{equation}
where $-n_-\leq i\leq n_+$. Intuitively, each closed component of a state is assigned a copy of $\A$, the segment component is assigned the one-dimensional $\Q$-vector space $X\A$, and the wedge power is included to determine signs in differentials later.

The chain groups are equipped with three gradings. For a generator $v$ in a summand of $C_i(K)$ labeled by state $s$, 
\begin{itemize}
    \item[i.] the homological grading is given by $i(v)=|\!|v|\!|-n_-=|\!|s|\!|-n_-$,
    \item[ii.] the $q$-grading is given by $q(v)=\deg(v)+i(v)+n_+-n_-+1$, and
    \item[iii.] the $u$-grading is given by $\mu(v)=k_s\cdot \alpha-K\cdot \alpha$.
\end{itemize}
\begin{notation}
The total complex is written as $C(K)=\bigoplus_i C_i(K)=\underset{i,j,k}{\bigoplus}C_{i,j}^k(K)$ where subscripts $i,j$ refer to the $i$- and $q$-gradings, respectively, whereas $k$ refers to the $u$-grading.
\end{notation}
\begin{figure}[hbt!]
    \centering
    \includegraphics[scale=0.4]{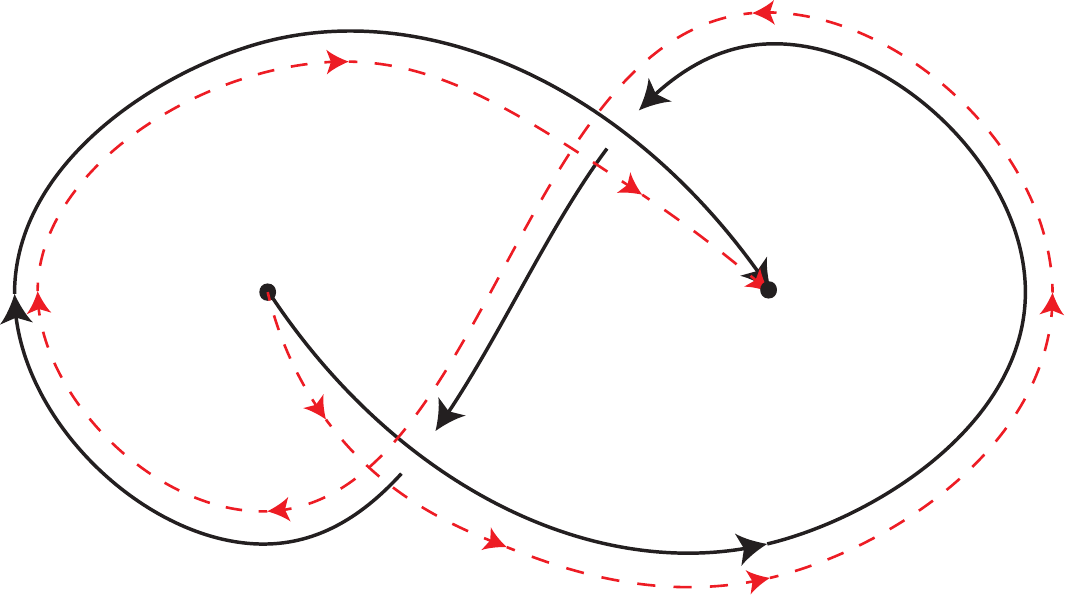}
       \caption{Canonical shortcut.}
    \label{fig:canonicalshortcut}
\end{figure}

The $u$-grading $\mu(v)$ of a generator $v$ depends only on the index state of the summand space, to which $v$ belongs. In particular, $\mu(v)$ is independent of the shortcut $\alpha$, which is dropped from the notation. However, to calculate the gradings of the chain groups, one has to make a choice of $\alpha$. To eliminate this intermediate step, and calculate $\mu(v)$ directly from the state $s$, we make a special choice for the shortcut $\alpha$ as follows. 
\begin{defn}
For a knotoid $K$, the shortcut obtained by pushing $K$ slightly to its right, while having the same orientation as $K$ (see Figure~\ref{fig:canonicalshortcut}) is referred as the \emph{canonical shortcut} of $K$.
\end{defn}

When $\alpha$ is chosen as the canonical shortcut, both positive and negative crossings have no contribution to $K\cdot \alpha$, so it is sufficient to consider $\mu(v)=k_s\cdot \alpha$. Notice that a positive (resp. negative) crossing does not contribute to $k_s\cdot \alpha$ when it is assigned a $0$-smoothing (resp. $1$-smoothing); see Figure ~\ref{fig:contributions to ks dot a}.
\begin{figure}[hbt!]
    \centering
    \begin{subfigure}[b]{0.4\textwidth}
    \centering
    \includegraphics[width=\textwidth]{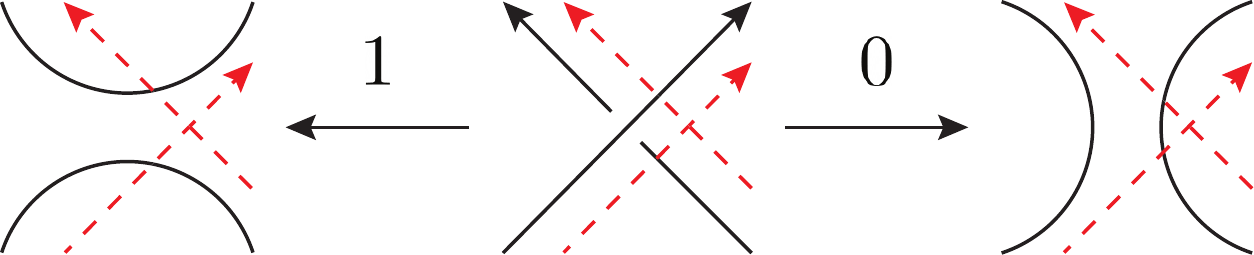}
    \caption{Positive crossing}
    \label{fig:positive4shortcut}
    \end{subfigure}
    \hspace{.5in}
    \begin{subfigure}[b]{0.4\textwidth}
    \centering
    \includegraphics[width=\textwidth]{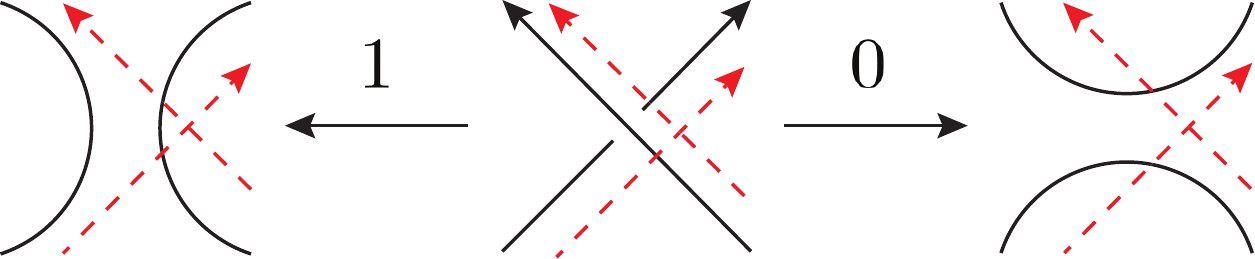}
    \caption{Negative crossing}
    \label{fig:negative4shortcut}
    \end{subfigure}
    \caption{Contributions of crossings to $k_s\cdot \alpha$.}
    \label{fig:contributions to ks dot a}
\end{figure}
For the two other remaining cases, the sign of each crossing of $k_s$ and $\alpha$ depends whether the incoming string of $k_s$ to the crossing of $K$ is an overpass or underpass, and whether it has the same or opposite orientation compared to the orientation of $K$. All possible cases are listed in Table ~\ref{tab:ks signs}.
\begin{table}[hbt!]
    \centering
    \begin{subtable}{0.45\textwidth}
        \begin{tabular}{|c|c|c|} \hline
            & Overpass & Underpass \\ \hline
            Same & - & + \\ \hline
            Opposite & + & -\\ \hline
        \end{tabular}
        \caption{Positive crossing with 1-smoothing}
        \label{tab:pos1smooth}
    \end{subtable}
    \hfill
    \begin{subtable}{0.45\textwidth}
        \begin{tabular}{|c|c|c|} \hline
            & Overpass & Underpass \\ \hline
            Same & + & - \\ \hline
            Opposite & - & +\\ \hline
        \end{tabular}
        \caption{Negative crossing with 0-smoothing}
        \label{tab:neg0smooth}
    \end{subtable}
    \caption{Signs for the calculation of $k_s\cdot \alpha$}\vspace{-.4cm} 
    \label{tab:ks signs}
\end{table}
To summarize this information in a compact formula that is independent of the shortcut $\alpha$, we introduce the following functions.
\begin{defn}
For a crossing $c$ and a marked incoming arc of $\gamma$ of $c$, the \emph{flow} function is defined as
\[ \phi_c(\gamma)=
    \begin{cases}
      +1 & \gamma \text{ has the same orientation as }$K$. \\
      -1 & \text{Otherwise.}
   \end{cases}
\]and the \emph{level} function as
\[ \lambda_c(\gamma)=
    \begin{cases}
      +1 & \gamma \text{ approaches to an overpass}. \\
      -1 & \text{Otherwise.}
   \end{cases}
\]
\end{defn}
\begin{lem}\label{lem:shortcutformula}
For a knotoid diagram $K$, letting $sign(c)$ denote the sign of the crossing $c$ and $s(c)$ be the value of $c$ under the smoothing function, we have
\begin{equation}\label{equ:upsilon}
    \mu(v)=\sum_{\underset{\gamma \subset k_s, \gamma\rightarrow c}{c \subset K}}
    \delta_{sign(c),(-1)^{s(c)+1}}(-1)^{s(c)}\phi_c(\gamma)\lambda_c(\gamma),
\end{equation}
where $v$ is a generator of the summand $\A^{\otimes |s|-1}\otimes X\A \otimes \bigwedge\nolimits^{|\!|s|\!|}\V_s$ of $C_i(K)$, and the notation $\gamma\rightarrow c$ means that $\gamma$ is an approaching arc to $c$.
\end{lem}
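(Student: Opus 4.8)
The plan is to compute $\mu(v)=k_s\cdot\alpha-K\cdot\alpha$ for the \emph{canonical} shortcut $\alpha$ and then reorganize the resulting signed intersection count into a sum over the crossings of $K$. Since $\mu(v)$ does not depend on the choice of shortcut, working with the canonical one loses no generality. As already observed, for the canonical shortcut neither positive nor negative crossings contribute to $K\cdot\alpha$; moreover, away from the crossings $K$ runs parallel to $\alpha$, and near the leg and head the two curves share only their common endpoints, so altogether $K\cdot\alpha=0$. Hence it suffices to evaluate the transverse signed intersection number $k_s\cdot\alpha$.

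First I would localize the intersections. After making $\alpha$ transverse to the state $s$, note that outside small disk neighborhoods $D_c$, one for each crossing $c$ of $K$, the segment component $k_s$ runs along arcs of $K$ and is therefore parallel to, and disjoint from, $\alpha$; near the leg and head the curves $k_s$ and $\alpha$ share an endpoint but, $\alpha$ being pushed off to the right, can be arranged disjoint otherwise. Consequently every intersection point of $k_s$ with $\alpha$ lies in some $D_c$, and
\[
k_s\cdot\alpha=\sum_{c\subset K}\ \sum_{\gamma\subset k_s,\ \gamma\to c}\ \varepsilon_c(\gamma),
\]
where $\gamma$ runs over the arcs of $k_s$ entering $D_c$ and $\varepsilon_c(\gamma)$ denotes the signed count, inside $D_c$, of the intersections with $\alpha$ of the strand of $k_s$ that $\gamma$ initiates.

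Next I would compute $\varepsilon_c(\gamma)$ crossing by crossing, distinguishing the two smoothings of $c$. The smoothing whose resolution arcs carry the orientation of $K$ is the $0$-smoothing when $c$ is positive and the $1$-smoothing when $c$ is negative, i.e.\ exactly when $sign(c)\neq(-1)^{s(c)+1}$, equivalently $\delta_{sign(c),(-1)^{s(c)+1}}=0$; in that case the resolution arcs in $D_c$ may be taken disjoint from the canonically pushed-off $\alpha$, so $\varepsilon_c(\gamma)=0$, as recorded in Figure~\ref{fig:contributions to ks dot a}. In the complementary case $sign(c)=(-1)^{s(c)+1}$ — a positive crossing with a $1$-smoothing, or a negative crossing with a $0$-smoothing — each entering arc of $k_s$ meets $\alpha$ exactly once inside $D_c$, with a sign governed by whether the incoming strand $\gamma$ is an over- or an underpass and by whether $\gamma$ carries the orientation of $K$; this is precisely the content of Table~\ref{tab:ks signs}. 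Reading the two subtables against the definitions of $\phi_c$ and $\lambda_c$ gives $\varepsilon_c(\gamma)=-\phi_c(\gamma)\lambda_c(\gamma)$ for a positive crossing with a $1$-smoothing and $\varepsilon_c(\gamma)=+\phi_c(\gamma)\lambda_c(\gamma)$ for a negative crossing with a $0$-smoothing, that is, $\varepsilon_c(\gamma)=(-1)^{s(c)}\phi_c(\gamma)\lambda_c(\gamma)$ in both. Folding in the Kronecker delta to absorb the vanishing case yields $\varepsilon_c(\gamma)=\delta_{sign(c),(-1)^{s(c)+1}}(-1)^{s(c)}\phi_c(\gamma)\lambda_c(\gamma)$, and summing over all admissible pairs $(c,\gamma)$ produces \eqref{equ:upsilon}.

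The step I expect to be the main obstacle is the crossing-local bookkeeping behind the sign computation: one must draw the finitely many local pictures to check that, in the disoriented case, exactly the entering arcs of $k_s$ each contribute a single intersection with $\alpha$, and to pin down the signs tabulated in Table~\ref{tab:ks signs}, being careful that $k_s$ may pass through a single disk $D_c$ twice and that along such a strand the orientation of $k_s$ need not agree with that of $K$. Everything else is a matter of reorganizing the case analysis that precedes the lemma.
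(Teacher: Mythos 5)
Your proposal is correct and follows essentially the same route as the paper, which gives no separate proof of this lemma but instead derives it from the immediately preceding discussion: choosing the canonical shortcut so that $K\cdot\alpha=0$, observing via Figure~\ref{fig:contributions to ks dot a} that the oriented resolutions contribute nothing (which is exactly the vanishing of the Kronecker delta), and reading the remaining signs off Table~\ref{tab:ks signs}; your sign bookkeeping $\varepsilon_c(\gamma)=(-1)^{s(c)}\phi_c(\gamma)\lambda_c(\gamma)$ checks out against both subtables. The only substantive addition you make is to spell out the localization of the intersections of $k_s$ with $\alpha$ into disk neighborhoods of the crossings, which the paper leaves implicit.
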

\begin{rem}
Note that the summation is not necessarily over all crossings of $K$ since the number of times $k_s$ visits a crossings can be 0, 1 or 2 depending on the state $s$. In either case, one does not need to make a choice of $\alpha$ to compute the formula ~\eqref{equ:upsilon}.
\end{rem}
\begin{rem}
It is required to work with a knotoid diagram $K$, as opposed to a multi-knotoid, to use Lemma ~\ref{lem:shortcutformula}, because it is assumed above that the canonical shortcut $\alpha$ follows $K$ through all crossings twice. For a multi-knotoid diagram, it is also true that $K\cdot \alpha =0$, but $k_s\cdot \alpha$ is not necessarily equal to $\mu(v)$ in the formula~\eqref{equ:upsilon}.
\end{rem}

\subsection{Differentials}
A complete resolution of a knotoid diagram yields a single segment component $k_s$ and zero or more closed components. For any two states that differ by only one crossing, the number of components either increase/decrease by 1 or stay the same. The corresponding maps can be viewed as merging of two components (closed or segment), division of a component (closed or segment) or a self map of the segment component, called \emph{anticurl} or \emph{turbulence} map. All these (local) cobordisms and their induced maps on associated vector spaces are listed diagrammatically in Figure ~\ref{fig:localcoborisms and induced maps}.
\begin{figure}[hbt!]
    \centering
    \begin{subfigure}[b]{0.3\textwidth}
        \includegraphics[width=\textwidth]{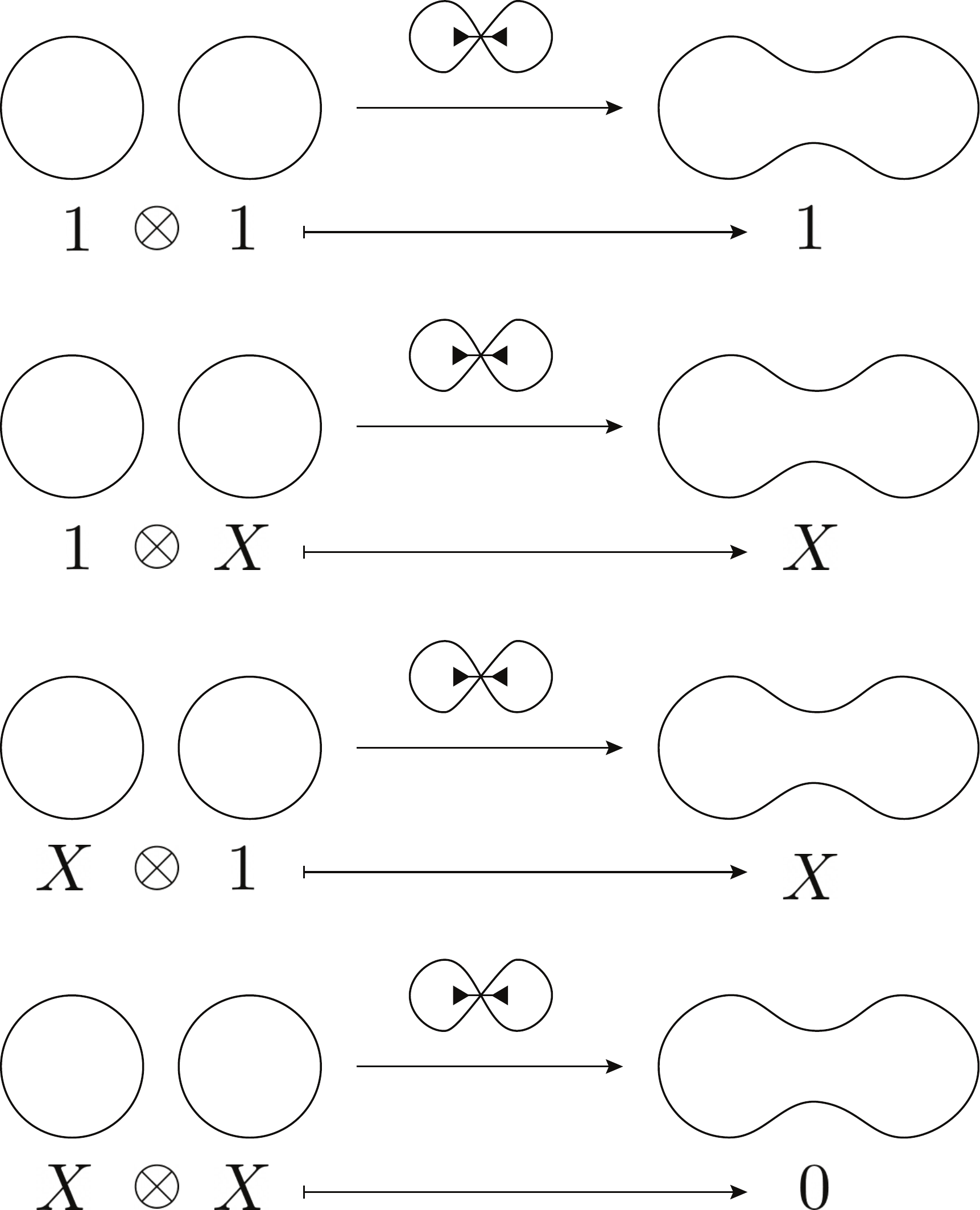}
        \caption{$m_1:\A\otimes \A\rightarrow \A$. Merging of two closed components.}
        \label{fig:merge}    
    \end{subfigure}
    \hfill
    \begin{subfigure}[b]{0.3\textwidth}
        \includegraphics[width=\textwidth]{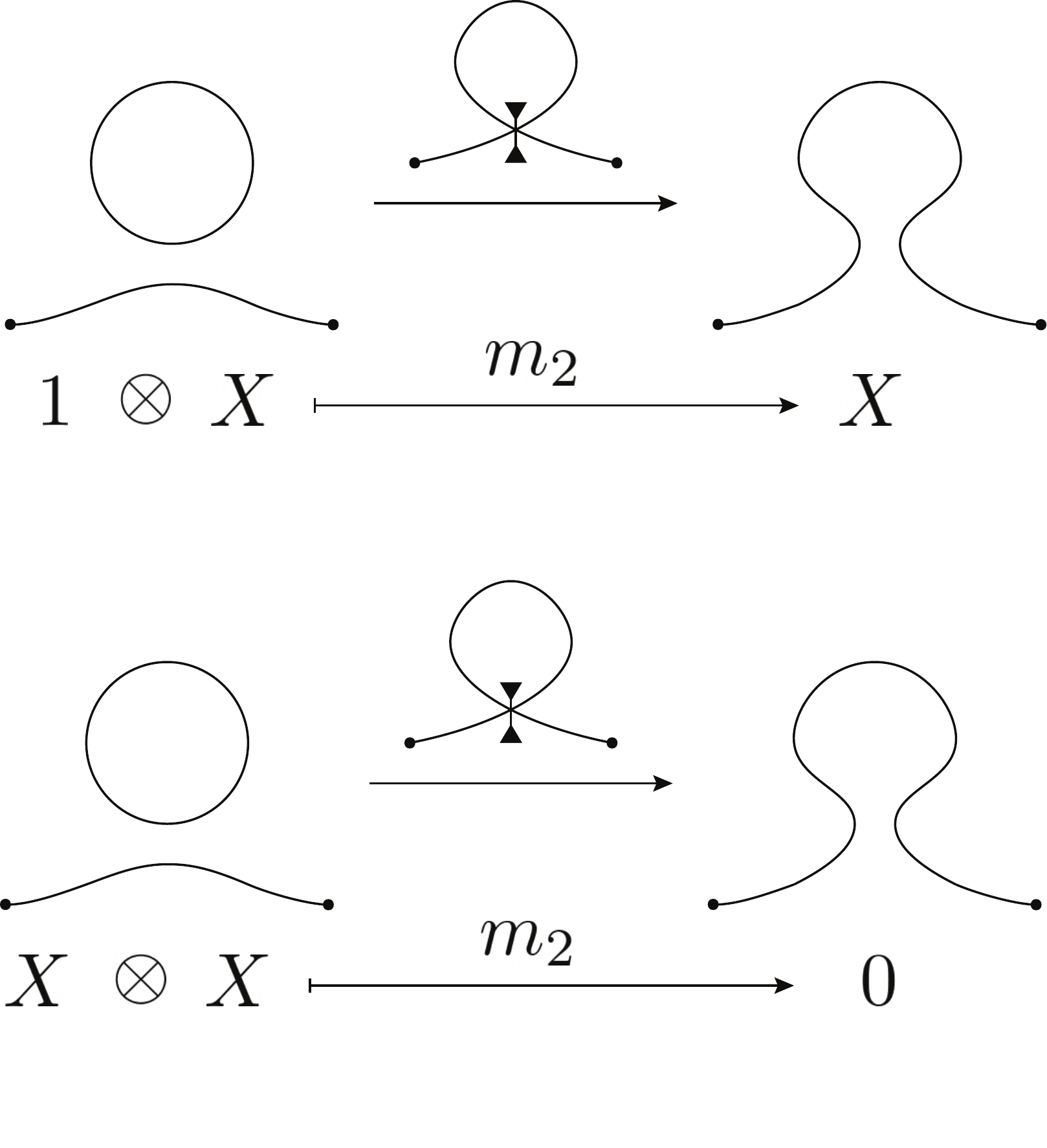}
        \caption{$m_2:\A\otimes X\A\rightarrow X\A$.\\ Merging into the segment component.}
        \label{fig:mergesegment}    
    \end{subfigure}
    \hfill
    \begin{subfigure}[b]{0.3\textwidth}
        \includegraphics[width=\textwidth]{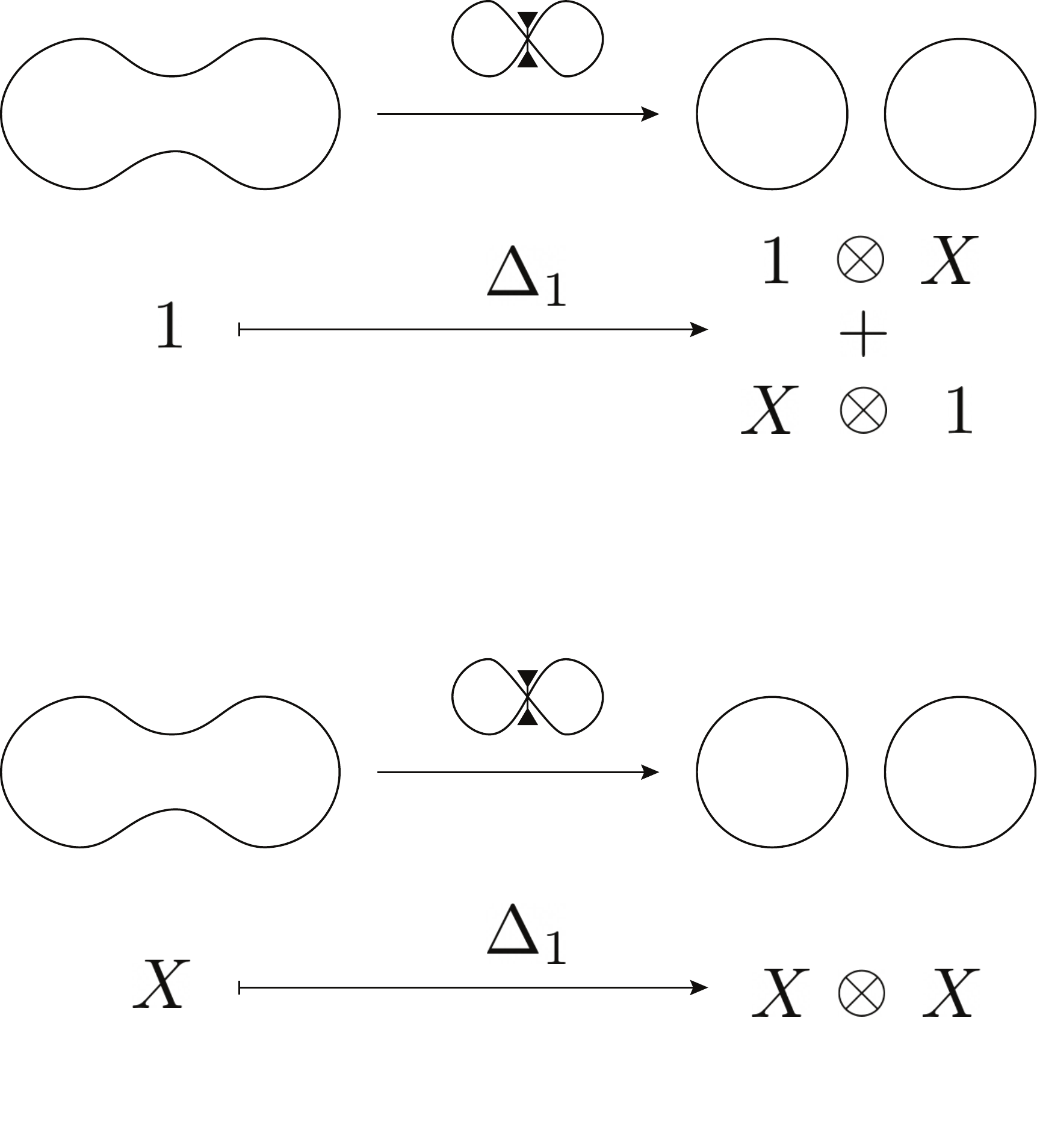}
        \caption{$\Delta_1:\A \rightarrow \A\otimes \A$.\\ Division of a closed component into two.}
        \label{fig:divide}    
    \end{subfigure}\\
    \begin{subfigure}[b]{0.3\textwidth}
        \includegraphics[width=\textwidth]{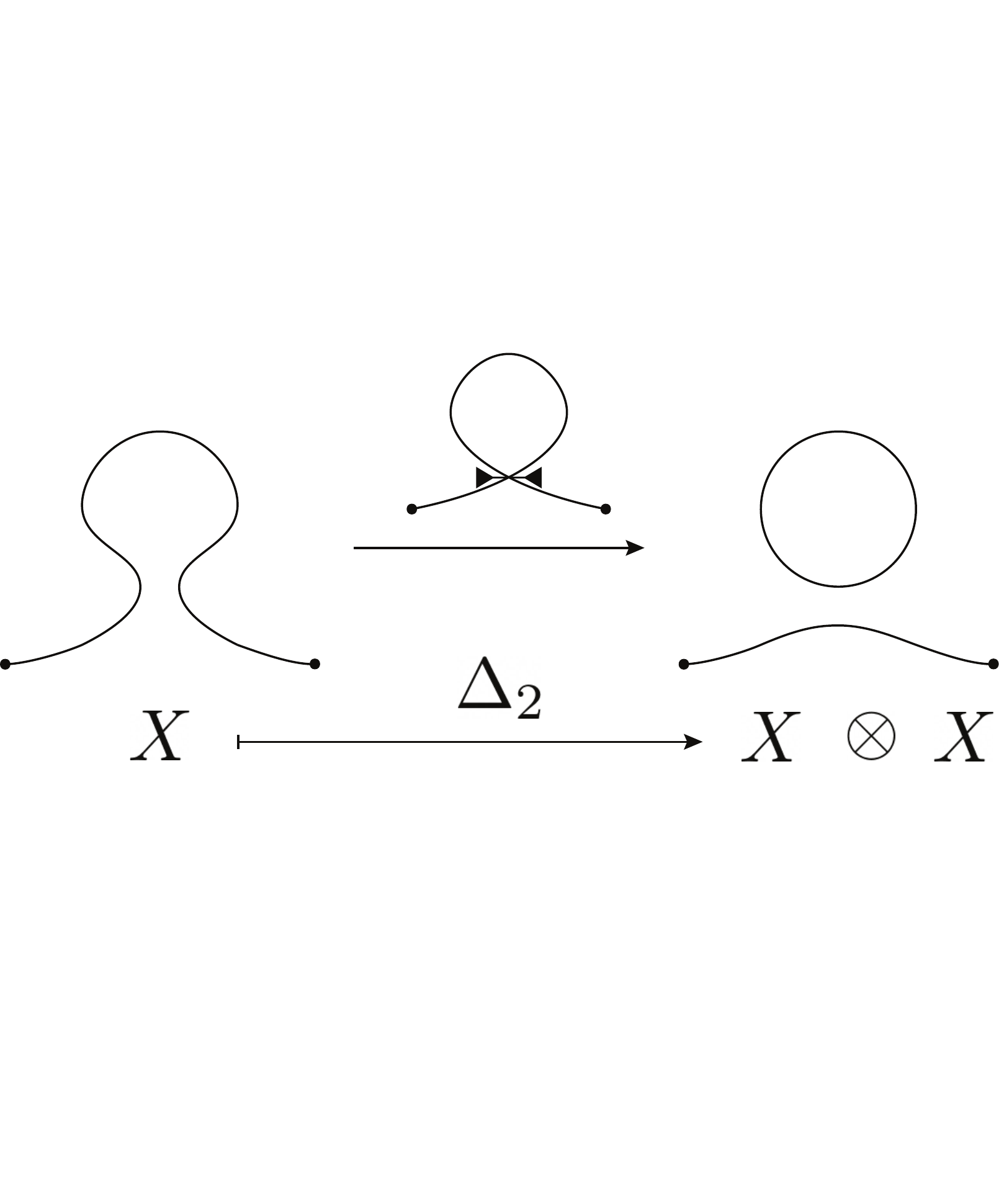}
        \caption{$\Delta_2:X\A \rightarrow \A\otimes X\A$. Division from the segment component.}
        \label{fig:dividesegment}    
    \end{subfigure}
    \hfill
    \begin{subfigure}[b]{0.3\textwidth}
        \includegraphics[width=\textwidth]{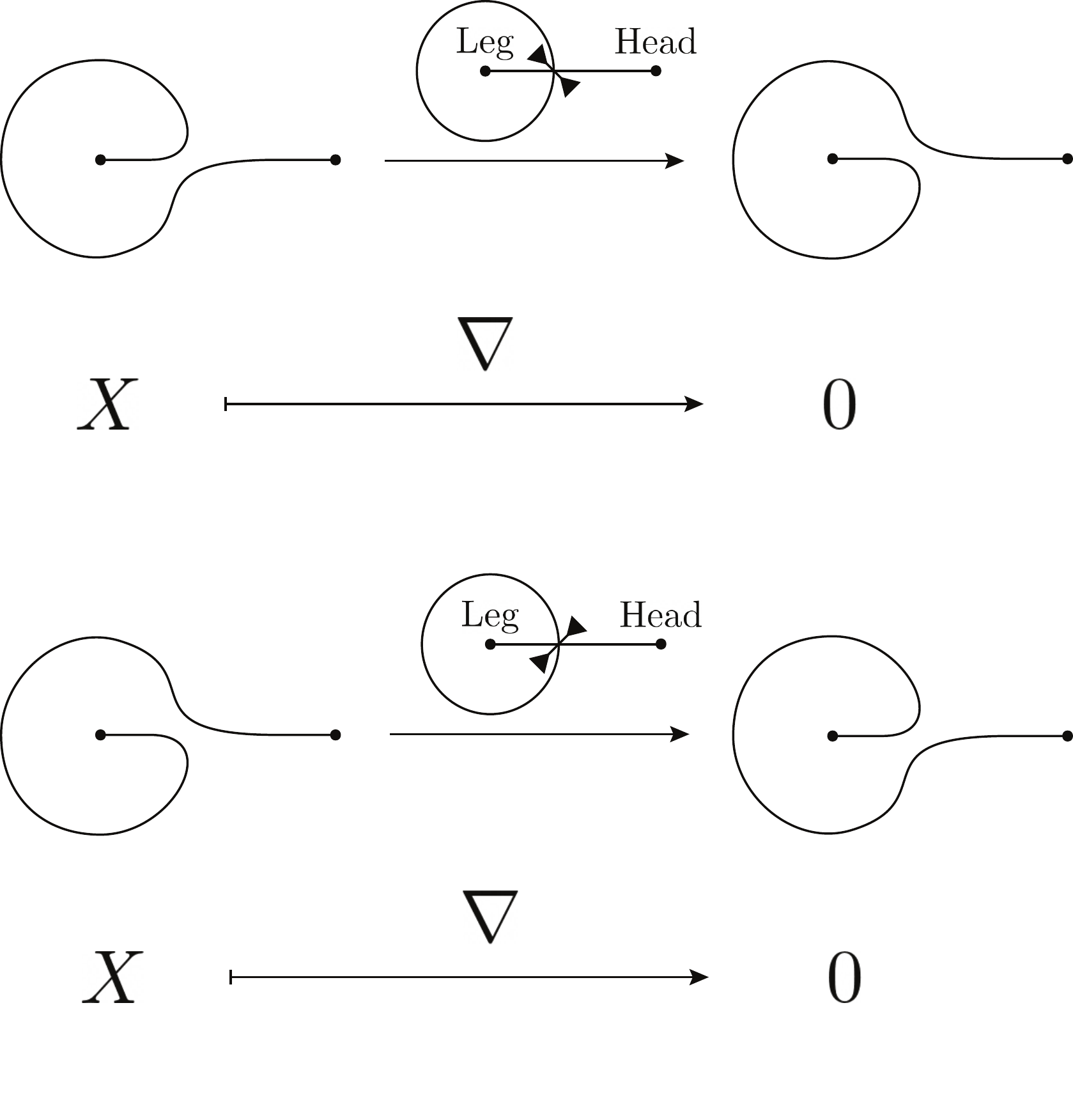}
        \caption{$\nabla: X\A\rightarrow X\A$. Anti-curl (or turbulence) move as self morphism of the segment component.}
        \label{fig:anticurl}    
    \end{subfigure}
    \hfill
    \begin{subfigure}[b]{0.3\textwidth}
        \includegraphics[width=\textwidth]{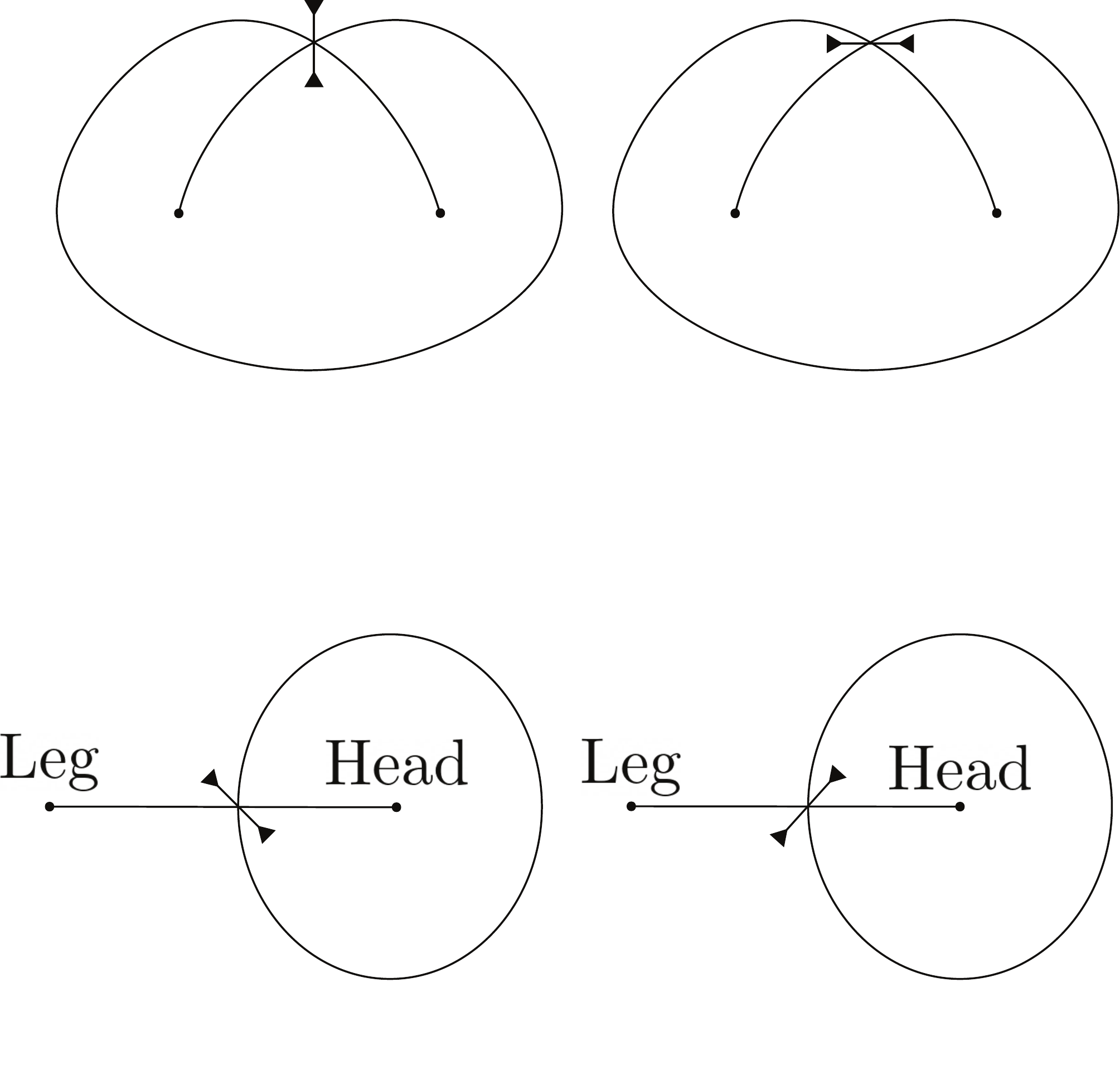}
        \caption{These maps are equivalent to the others in $S^2$ and are omitted.}
        \label{fig:omitted}    
    \end{subfigure}
    
    \caption{Local cobordisms and induced maps.}
    \label{fig:localcoborisms and induced maps}

\end{figure}
The maps are represented diagrammatically by an immersed segment together with a small line segment. The arrows on the line segment designate the local regions to be merged. In Figure~\ref{fig:anticurl}, the left endpoint stands for the leg and the right endpoint for the head of the knotoid, and one can observe that these maps are indistinguishable in $S^2$ from the maps in the second row of Figure~\ref{fig:omitted}. Let $s$ and $s'$ be two states that have the same value at all crossings except at $c_0$ where $s(c_0)=0$ and $s(c_0)=1$. The cobordism from $s$ to $s'$ is necessarily one of the local cobordism in Figure ~\ref{fig:localcoborisms and induced maps} and identity cobordism everywhere else. Thus, we define the \emph{edge map} $e_{s\rightarrow s'}$ as follows:
\begin{align}
    e_{s\rightarrow s'}: \A^{\otimes |s|-1}\otimes X\A \otimes \bigwedge\nolimits^{|\!|s|\!|}\V_s &\longrightarrow \A^{\otimes |s'|-1}\otimes X\A \otimes \bigwedge\nolimits^{|\!|s'|\!|}\V_s'\\
    v\otimes w \otimes [x] &\longmapsto (m_{1,2}(v), \Delta_{1,2}(v)\text{ or }\nabla(v))\otimes w \otimes [c_0\wedge x].
\end{align}
It is understood that $v$ belongs to a space associated to a component that undergoes a non-trivial cobordism and $w$ belongs to one that does not.\\
\begin{rem}~\label{rem:signs}
A clarification is needed for the last terms in the tensor products: $[x]$ represents the canonical generator $c_{i_1}\wedge \dots\wedge c_{i_{|\!|s|\!|}}$, (i.e. $i_1<\dots<i_{|\!|s|\!|}$) of the one-dimensional vector space $\bigwedge\nolimits^{|\!|s|\!|}\V_s$. The differential places the crossing $c_0$ to the front as $[c_0\wedge x]$. To bring $c_0$ to its proper location in the generator $c_{i_1}\wedge \dots\wedge c_{i_{|\!|s'|\!|}}$, one may need to apply an odd number of switches and that would make the edge map to pick a negative sign. In ~\cite{BarNatan}, this is explained as counting the number of 1's before the location of the change from 0 to 1, if the image of the state is thought of as a string of 0's and 1's written in the ordering of crossings. In this setting, the states (and corresponding vector spaces) are placed in the corners of an $n$ dimensional cube and the edge maps on the edges of the cube. The exterior power factor makes sure that each face of the cube contains an odd number of edge maps that pick a negative sign. That way any commutative face becomes skew-commutative.
\end{rem}

\begin{defn}
The differential $d_i:C_i(K)\longrightarrow C_{i+1}(K)$ is defined as the sum of all the edge maps whose domains are specified by a state $s$ with $|\!|s|\!|=i+n_-$:
\begin{equation}\label{equ:sumofedgemaps}
    d_i=\sum_{\underset{\underset{|\!|s'|\!|=i+n_-+1}{|\!|s|\!|=i+n_-}}{s, s' \in S(K)}} e_{s\rightarrow s'}.
\end{equation}
\end{defn}

To show that $d^2=0$, it suffices to prove that each face of the cube is skew-commutative. In the light of Remark~\ref{rem:signs}, this reduces to proving that each face of the cube is commutative when the wedge products are dropped from the definition of the chain complex. One way to prove the commutativity of faces is to directly verify, in a routine computation, that all possible ways of arranging the maps in Figure~\ref{fig:localcoborisms and induced maps} on the edges of a square constitute commutative squares. We will, instead, observe that all the maps $\nabla$ induced by the anti-curl move is 0. The rest of the elementary maps, merge and divide, are commutative multiplication and co-commutative co-multiplication, respectively, of a bialgebra that satisfy the Frobenius identity: $\Delta \circ m =(m\otimes\text{id}) \circ (\text{id} \otimes \Delta)$; see Sections 2.3 and 8.2 in~\cite{Khovanov}. Thus, all faces of the cube are commutative.

\subsection{Homology} Since all maps in Figure ~\ref{fig:localcoborisms and induced maps} decrease the degree ($\deg(\cdot)$) by $1$, the edge maps, and consequently the differential $d$ of the chain complex $C(K)$ has degree $(1,0,0)$, that is, the differential preserves the $q$ and $u$-gradings. Therefore, the graded Euler characteristics of the chain complex $C(K)=\bigoplus C_{i,j}^k(K)$, and its homology $H(K)=\bigoplus H_{i,j}^k(K)$ are equal:
\begin{equation}\label{equ:gradedeulers}
    \hat{\chi}(C(K))= \sum_{i,j,k}(-1)^i q^j u^k \text{rk}(C_{i,j}^k(K)) = \sum_{i,j,k}(-1)^i q^j u^k \text{rk}(H_{i,j}^k(K))=\hat{\chi}(H(K)).
\end{equation}

We state the main result of this paper:

\begin{thm}\label{thm:maintheorem}
For an oriented (multi-)knotoid represented by diagram $K$ in $S^2$, the homology groups $H_{i,j}^k(K)$ are invariants, up to isomorphism, of the knotoid, for all $i,j,k$.
\end{thm}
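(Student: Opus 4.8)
The plan is to split the statement into two parts: that $H(K)$ is independent of the auxiliary data used to build $C(K)$, and that $H(K)$ is unchanged by the Reidemeister moves. Invariance under ambient isotopy of $S^2$ is automatic, since $C(K)$ depends only on the combinatorial (Gauss) data of the diagram. Among the auxiliary choices, reordering the crossing set $\C$ rescales each one-dimensional exterior-power factor by the sign of the corresponding permutation; assembling these signs state by state yields an isomorphism of chain complexes intertwining the differentials, so the homology is unaffected. Independence of the shortcut $\alpha$ is already built in: as noted before Lemma~\ref{lem:shortcutformula}, the $u$-grading $\mu(v)=k_s\cdot\alpha-K\cdot\alpha$ does not depend on $\alpha$ (two shortcuts differ by a null-homologous loop in $S^2$), and $\alpha$ enters the construction only through $\mu$.

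For the Reidemeister moves I would follow the Khovanov--Bar-Natan strategy: for each move, in all of its oriented variants, produce a chain homotopy equivalence between the complexes of the two diagrams, compatible with the $q$- and $u$-gradings, the homological-grading shift being precisely the one already absorbed into the $n_\pm$-normalizations of~\eqref{equ:windingithchaingroup}. Concretely, put the crossings inside the move's supporting disk first in the ordering of $\C$, so that $C(K)$ becomes the total complex of the cube on those local crossings tensored (with signs governed by the exterior factor as in Remark~\ref{rem:signs}) with the fixed data outside the disk; then simplify the local cube by Gaussian elimination. For R1 the local cube is a single edge whose edge map is one of $m_1,m_2,\Delta_1,\Delta_2$ according to which component the extra small circle meets and to the sign of the kink; cancelling the part of this map that is an isomorphism leaves the complex of the de-kinked diagram. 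For R2 the local cube is a square: one corner carries an extra circle and is killed by Gaussian elimination, after which a second pair of corners cancels through the surviving isomorphism, leaving the complex of the simplified diagram; the cases in which the segment component $k_s$ runs through the disk are handled identically using $m_2,\Delta_2$. For R3 the local cube is three-dimensional, and I would reduce both the ``before'' and ``after'' complexes by Gaussian elimination to a common complex (Bar-Natan's ``complex with a dot'', \cite{BarNatan}), which gives the equivalence; here R3 changes neither the number nor the signs of the crossings, so there is no grading shift to track.

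The ingredient that is genuinely new relative to the classical case is verifying that all of these cancellations and identifications preserve the $u$-grading. The key observation is that $\mu(v)$ is computed locally by Lemma~\ref{lem:shortcutformula} as a sum of terms $\delta_{sign(c),(-1)^{s(c)+1}}(-1)^{s(c)}\phi_c(\gamma)\lambda_c(\gamma)$ over the approaching arcs $\gamma$ of the crossings; crossings outside the move's disk contribute the same to every state occurring before and after the move, so it suffices to check, by a finite case analysis using the flow and level functions and the sign data of Table~\ref{tab:ks signs} and Figure~\ref{fig:contributions to ks dot a}, that the surviving generators of the reduced local complexes are matched in a $u$-grading-preserving way. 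Two facts streamline this: every anticurl map $\nabla$ vanishes, so no turbulence edges appear in the cubes, and absorbing a closed component into $k_s$ leaves $k_s\cdot\alpha$ unchanged (as observed after~\eqref{equ:extendedJones}), so $\mu$ is constant along merge and divide edges involving the segment.

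I expect the R3 step, together with this tri-graded compatibility, to be the main obstacle: the three-dimensional cube has many states, and one must keep careful track of which edges involve the segment component (and hence $m_2,\Delta_2$) versus only closed components, in each oriented R3 configuration, and confirm that the reduction to the common ``dotted'' complex respects all three gradings simultaneously. Once the local $u$-bookkeeping is organized --- for instance by fixing the canonical shortcut, which follows $k_s$ through the disk --- the remaining verifications are routine but numerous, exactly as for the $q$- and $i$-gradings in \cite{BarNatan}.
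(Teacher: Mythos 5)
Your proposal follows essentially the same route as the paper: ordering-independence via a sign-twisted isomorphism of the exterior-power factors, shortcut-independence from the invariance of $\mu$, and Reidemeister invariance by Gaussian elimination (the paper's zigzag/cancellation lemma, applied in a careful ``monotonic'' order to avoid creating new arrows) on the cube of resolutions, with a case analysis over how the segment component meets the local disk and a verification that the surviving generators match in a $u$-grading-preserving way using the canonical shortcut, the vanishing of $\nabla$, and the fact that closed components have zero algebraic intersection with $\alpha$. The one caution is that the local cube is not literally a tensor factor of $C(K)$ --- its edge maps depend on the external connectivity, which is precisely what forces the paper's five R2 cases and the Jacobsson-style symbolic bookkeeping for R3 --- but since you acknowledge tracking the segment-component configurations separately, this is a matter of presentation rather than a gap.
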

We give the proof of this theorem in Section ~\ref{sec.invariance}. We will call this invariant \emph{the winding homology} of knotoids. For knots (as a subset of knotoids), the winding homology and the reduced Khovanov homology agree.
\begin{thm}\label{thm.restrictW2knots}
The three-variable Poincar\'{e} polynomial $W_K(t,q,u)$ of $H(K)=\bigoplus H_{i,j}^k(K)$ is equivalent to that of the reduced Khovanov homology when $K$ is a knot or a multi-knot (a link with a base point).
\end{thm}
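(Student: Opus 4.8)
The plan is to reduce the winding homology of a (multi-)knot to the reduced Khovanov homology by comparing the two chain complexes term by term and showing the $u$-grading carries no extra information. First I would observe that for a knot diagram $\kappa^\bullet$, obtained from a closed knot diagram $\kappa$ by deleting a small arc, the segment component of every state $s$ plays exactly the role of the distinguished (marked) component in the reduced Khovanov complex: the assignment of $X\A$ to the segment component in \eqref{equ:windingithchaingroup}, together with the $\A^{\otimes|s|-1}$ on the closed components, literally reproduces the reduced Khovanov chain group of $\kappa$ with basepoint on the marked component. The exterior-power factor $\bigwedge^{|\!|s|\!|}\V_s$ and the sign conventions of Remark~\ref{rem:signs} are just Bar-Natan's cube-of-resolutions sign assignment, so the differentials (merge $m_{1,2}$, divide $\Delta_{1,2}$, and the vanishing anticurl $\nabla$) coincide with the reduced Khovanov differential under this identification. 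Hence, forgetting the $u$-grading, $C(K)$ is isomorphic as a bigraded complex to the reduced Khovanov complex of $\kappa$, and the homological and $q$-gradings match (the shift $q(v)=\deg(v)+i(v)+n_+-n_-+1$ is exactly the reduced Khovanov $q$-shift with the marked component's $X$ accounted for).

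Next I would show that the $u$-grading is identically zero on every generator, which is the crux of the statement. The key point is that for a knot, one may take the canonical shortcut $\alpha$, and then $\mu(v)=k_s\cdot\alpha$ where $k_s$ is the segment component. Using Lemma~\ref{lem:shortcutformula}, $\mu(v)$ is a sum over crossings that $k_s$ approaches, of terms $\delta_{sign(c),(-1)^{s(c)+1}}(-1)^{s(c)}\phi_c(\gamma)\lambda_c(\gamma)$. I would argue that for a knot diagram this sum vanishes: since the underlying object is a closed knot with a point removed, the segment component of any state is obtained from a collection of circles by breaking one circle, and the algebraic intersection number of any such arc (or any cycle built from the state components) with the canonical pushoff of a closed curve is $0$ because linking/intersection numbers of a curve with its own small pushoff in $S^2$ vanish. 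More carefully, $k_s \cup_i c_s^i$ is homologous to the full resolution, which is null-homologous in $S^2$, so $(k_s\cup_i c_s^i)\cdot\alpha=0$; and as noted in the text $k_s\cdot\alpha=(k_s\cup_i c_s^i)\cdot\alpha$, giving $\mu(v)=0$. Therefore every generator sits in $u$-grading $0$, so $H(K)=\bigoplus_{i,j}H_{i,j}^0(K)$ and the three-variable Poincaré polynomial is $W_K(t,q,u)=\widetilde{Kh}_\kappa(t,q)$, independent of $u$.

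Finally I would assemble these two observations: the bigraded complex isomorphism gives $H_{i,j}^0(K)\cong \widetilde{Kh}_{i,j}(\kappa)$, and the $u$-grading vanishing shows there is nothing in other $u$-degrees, so $W_K(t,q,u)$ equals the two-variable reduced Khovanov Poincaré polynomial (viewed as a polynomial in $t,q,u$ constant in $u$). For the multi-knot case (a link with a basepoint) the same argument applies verbatim: the marked component becomes the segment component, the other link components are closed components, and the $u$-grading still vanishes because each state's components form a null-homologous $1$-cycle in $S^2$.

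The main obstacle I expect is the $u$-grading vanishing: one must be careful that Lemma~\ref{lem:shortcutformula} is being applied to a genuine knotoid diagram (not a multi-knotoid), and that the homological argument "the state resolution is null-homologous in $S^2$, hence has zero intersection with any pushoff" is made rigorous — in particular checking that orienting the closed components arbitrarily (as the text permits) does not change $k_s\cdot\alpha$, and that the canonical shortcut genuinely realizes the algebraic pushoff so that self-intersection numbers vanish. The term-by-term complex identification, by contrast, is routine once the dictionary (segment component $\leftrightarrow$ marked component, $X\A \leftrightarrow$ reduced factor) is set up.
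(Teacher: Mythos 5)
Your overall strategy --- identify $C(K)$ with a reduced Khovanov complex and show the $u$-grading vanishes identically --- is the same as the paper's, and the term-by-term identification of chain groups and differentials is essentially right (though you should say explicitly why the anticurl $\nabla$ cannot occur for a knot: closing the segment component up with a shortcut turns every state into a state of a closed diagram, and a saddle on a closed diagram always changes the number of components by one, so an edge realized by $\nabla$ would connect summands of unequal dimension).

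The genuine gap is your justification that $\mu(v)=0$. The argument ``$k_s\cup_i c_s^i$ is null-homologous in $S^2$, hence $(k_s\cup_i c_s^i)\cdot\alpha=0$'' proves nothing: \emph{every} $1$-cycle in $S^2$ is null-homologous, so if this reasoning were valid it would give $\mu\equiv 0$ for every knotoid and kill the $u$-grading entirely, contradicting the computations in Section~\ref{sec.computations} (e.g.\ $W_{K_1}$ has a nonzero $u^2$ part). The point you are missing is that $\alpha$ is an arc with endpoints $L$ and $H$, not a closed curve; the intersection number with it is controlled by the class of the resolution in $H_1(S^2\setminus\{L,H\})\cong\Z$, i.e.\ by the winding around the endpoints, not by the class in $H_1(S^2)=0$. (Also $k_s$ itself is an arc from $L$ to $H$, so $k_s\cup_i c_s^i$ is not even a cycle.) The correct argument is simpler and avoids the canonical shortcut altogether: for a knot the two endpoints lie in the same complementary region, so $\alpha$ can be chosen disjoint from $K$ and hence from every resolution, giving $k_s\cdot\alpha=K\cdot\alpha=0$ on the nose --- this is exactly what the paper does, and it is also what makes the identification with the reduced complex of $K\cup\alpha$ (basepoint on $\alpha$) transparent. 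If you insist on the canonical shortcut, you must transfer this vanishing via the independence of $\mu$ from the choice of shortcut (Section~4.2), not via a self-intersection argument. The same correction is needed in your multi-knot paragraph; note in addition that Lemma~\ref{lem:shortcutformula} is stated only for knotoid diagrams, not multi-knotoids, so it cannot be invoked there.
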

\begin{proof}
When $K$ is a knot, i.e. a knotoid whose endpoints are on the same region, the shortcut $\alpha$ can be chosen so that $\alpha$ does not intersect $K$. Then $\mu(v)=0$ for all $v\in C(K)$ and any saddle cobordism that involves two arcs of the segment component is necessarily either $m_2$ or $\Delta_2$ ($\nabla$ is not possible). Letting $p$ be a point on $\alpha$, the maps $m_2$ and $\Delta_2$ on a resolution of $K$ become $m_1$ and $\Delta_1$ on the same resolution of $K\cup \alpha$. Then the closed component of a resolution of $K\cup \alpha$ with the base point $p$ is assigned the vector space $X\A$ which is exactly how the reduced Khovanov complex of the knot $K\cup \alpha$ with base point $p$ is defined. The same argument holds to go from multi-knotoids to links. The only difference is that, in this case, the homology depends on which component of the link is considered the segment component of the multi-knotoid.
\end{proof}
\begin{cor}
Ignoring the $u$-grading gives the invariant $Kh(K)=\bigoplus H_{i,j}(K)$ of knotoids that generalize the reduced Khovanov homology of knots to knotoids. In particular, for a knotoid $K$, $W_K(t,q,u=1)=Kh_K(t,q)$ where $Kh_K$ denotes the Poincar\'{e} polynomial of $Kh(K)$. The Jones polynomial of knotoids is categorified by $Kh(K)$ in the sense that $Kh_K(t=-1,q)=J_K(q)$ for knotoid $K$.
\end{cor}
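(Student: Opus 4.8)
The plan is to dispose of the three assertions in order; each reduces to bookkeeping once Theorems~\ref{thm:maintheorem} and~\ref{thm.restrictW2knots} and the Euler-characteristic identity~\eqref{equ:gradedeulers} are available.

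For the first assertion, I would note that by Theorem~\ref{thm:maintheorem} the triply-graded groups $H_{i,j}^k(K)$ are invariants, so a knotoid equivalence $K\simeq K'$ induces isomorphisms $H_{i,j}^k(K)\cong H_{i,j}^k(K')$ for all $i,j,k$; taking the direct sum over $k$ yields $H_{i,j}(K)\cong H_{i,j}(K')$, so the bigraded group $Kh(K)=\bigoplus_{i,j}H_{i,j}(K)$ is itself a knotoid invariant. That $Kh$ extends the reduced Khovanov homology of knots is exactly Theorem~\ref{thm.restrictW2knots} (together with its proof, which identifies the underlying chain complexes) after forgetting the $u$-grading. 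The identity $W_K(t,q,u=1)=Kh_K(t,q)$ is immediate from the definitions of the Poincar\'e polynomials: putting $u=1$ turns $\sum_k u^k\,\text{rk}(H_{i,j}^k(K))$ into $\sum_k\text{rk}(H_{i,j}^k(K))=\text{rk}(H_{i,j}(K))$, whence $W_K(t,q,1)=\sum_{i,j}t^iq^j\,\text{rk}(H_{i,j}(K))=Kh_K(t,q)$.

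The only assertion with content is $Kh_K(t=-1,q)=J_K(q)$. Setting $t=-1$ turns the Poincar\'e polynomial into the graded Euler characteristic of the bigraded homology, so applying~\eqref{equ:gradedeulers} with $u=1$ gives $Kh_K(-1,q)=\sum_{i,j}(-1)^iq^j\,\text{rk}(C_{i,j}(K))$, which I would then evaluate directly from the definition~\eqref{equ:windingithchaingroup} of the chain groups. Fix a state $s\in S(K)$ with $|s|$ components. On a basis of the summand $\A^{\otimes|s|-1}\otimes X\A\otimes\bigwedge\nolimits^{|\!|s|\!|}\V_s$, the generating function of $q^{\deg(\cdot)}$ equals $(q+q^{-1})^{|s|-1}\cdot q^{-1}$: each of the $|s|-1$ closed-component factors $\A$, with basis $\{1,X\}$, contributes $q+q^{-1}$, the segment factor $X\A$, which is spanned by $X$, contributes $q^{-1}$, and the one-dimensional degree-zero exterior-power factor contributes nothing. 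Substituting the grading formulas $i(v)=|\!|s|\!|-n_-$ and $q(v)=\deg(v)+i(v)+n_+-n_-+1$ and pulling out the $s$-independent prefactor gives
\begin{equation*}
    \sum_{i,j}(-1)^iq^j\,\text{rk}(C_{i,j}(K))=(-1)^{n_-}q^{n_+-2n_-}\sum_{s\in S(K)}(-q)^{|\!|s|\!|}(q+q^{-1})^{|s|-1}.
\end{equation*}
The right-hand side is precisely the specialization $T_K(q,u=1)$ of~\eqref{equ:extendedJones}, and $T_K(q,u=1)=J_K(q)$ by~\eqref{equ:jones4knotoids}; hence $Kh_K(-1,q)=J_K(q)$.

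I do not anticipate a genuine obstacle. The single point demanding care is verifying that the normalization built into the $q$-grading (the shift by $i(v)+n_+-n_-+1$) matches the writhe and smoothing-count normalization appearing in~\eqref{equ:extendedJones}, so that the prefactor emerges as $(-1)^{n_-}q^{n_+-2n_-}$ and the state sum carries exactly the sign $(-q)^{|\!|s|\!|}$; checking this on a single positive and a single negative crossing pins down the general identity.
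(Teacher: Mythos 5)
Your proposal is correct and follows essentially the same route the paper takes: the invariance and restriction-to-knots claims are immediate from Theorems~\ref{thm:maintheorem} and~\ref{thm.restrictW2knots}, and your Euler-characteristic computation matching $\hat\chi(C(K))$ against the state sum~\eqref{equ:extendedJones} is exactly the argument the paper gives (with the $u$-variable retained) for the theorem $W_K(t=-1,q,u)=T_K(q,u)$ stated immediately afterward, of which the identity $Kh_K(-1,q)=J_K(q)$ is the $u=1$ specialization. Your bookkeeping of the $q^{-1}$ from the $X\A$ factor cancelling the $+1$ normalization in the $q$-grading is the one point that needs checking, and it checks out.
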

The winding homology categorifies the Turaev polynomial in the following sense.
\begin{thm}
For a (multi-)knotoid $K$ in $S^2$, $W_K(t=-1,q,u)=T_K(q,u)$.
\end{thm}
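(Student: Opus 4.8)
The plan is to read off $W_K(t=-1,q,u)$ as a graded Euler characteristic and match it term-by-term against the state-sum formula~\eqref{equ:extendedJones} for $T_K(q,u)$. Since $W_K(t,q,u)=\sum_{i,j,k}t^iq^ju^k\,\text{rk}\,H_{i,j}^k(K)$, setting $t=-1$ gives exactly $\hat\chi(H(K))$, and by~\eqref{equ:gradedeulers} (which uses that the differential has tridegree $(1,0,0)$) this equals $\hat\chi(C(K))=\sum_{i,j,k}(-1)^iq^ju^k\,\text{rk}\,C_{i,j}^k(K)$. So the whole content is to compute $\hat\chi(C(K))$ directly from the definition~\eqref{equ:windingithchaingroup}.

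First I would split the sum over states. Fix $s\in S(K)$ and consider its summand $\A^{\otimes|s|-1}\otimes X\A\otimes\bigwedge^{|\!|s|\!|}\V_s$. On this summand the homological grading $i(v)=|\!|s|\!|-n_-$ and the $u$-grading $\mu(v)=k_s\cdot\alpha-K\cdot\alpha=\mu(s)$ are constant — note $\mu(v)$ depends only on the state, exactly as in~\eqref{equ:extendedJones}, and this needs no appeal to Lemma~\ref{lem:shortcutformula} (so the argument is uniform for multi-knotoids). Hence the contribution of $s$ to $\hat\chi(C(K))$ is $(-1)^{|\!|s|\!|-n_-}u^{\mu(s)}\bigl(\sum_v q^{q(v)}\bigr)$, the last sum over the standard basis. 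Using $q(v)=\deg(v)+i(v)+n_+-n_-+1$, the fact that the exterior-power factor is one-dimensional and carries no $\deg$-grading, $\deg(X)=-1$ in the $X\A$ slot, and that $\A^{\otimes|s|-1}$ has graded dimension $(q+q^{-1})^{|s|-1}$, one obtains $\sum_v q^{q(v)}=q^{|\!|s|\!|+n_+-2n_-}(q+q^{-1})^{|s|-1}$.

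Finally I would sum over all states and extract the state-independent factors. Writing $(-1)^{|\!|s|\!|-n_-}=(-1)^{n_-}(-1)^{|\!|s|\!|}$ and combining $(-1)^{|\!|s|\!|}q^{|\!|s|\!|}=(-q)^{|\!|s|\!|}$ gives
\[
\hat\chi(C(K))=(-1)^{n_-}q^{\,n_+-2n_-}\sum_{s\in S(K)}u^{\mu(s)}(-q)^{|\!|s|\!|}(q+q^{-1})^{|s|-1},
\]
which is literally the right-hand side of~\eqref{equ:extendedJones}. Therefore $W_K(t=-1,q,u)=T_K(q,u)$.

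There is no substantial obstacle: the proof is a bookkeeping computation. The only points requiring care are correctly tracking the three grading shifts in the formula for $q(v)$ (so that the $q^{-1}$ from $\deg(X)$, the $+1$ shift, and the $i(v)$-dependence assemble into the clean exponent $|\!|s|\!|+n_+-2n_-$), and making explicit that the wedge-power factor is grading-neutral and its choice of generator affects only signs — hence is invisible to ranks and to the Euler characteristic.
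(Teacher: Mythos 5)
Your proposal is correct and follows essentially the same route as the paper: both reduce $W_K(t=-1,q,u)$ to $\hat\chi(C(K))$ via equation~\eqref{equ:gradedeulers} and then observe that the vertex-by-vertex graded count of the cube reproduces the state sum~\eqref{equ:extendedJones}. You simply write out the bookkeeping (the cancellation of the $+1$ normalization against $\deg(X)=-1$, and the assembly of the exponent $|\!|s|\!|+n_+-2n_-$) that the paper leaves implicit, and your observation that $\mu(v)$ is state-determined without Lemma~\ref{lem:shortcutformula} correctly covers the multi-knotoid case.
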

\begin{proof}
If one ignores all the edge maps and adds up the graded degrees with alternating signs at each vertex of the cube that makes $C(K)$, the computation is identical to that of the Turaev polynomial as in equation ~\eqref{equ:extendedJones}. The addition of 1 in the definition of $q-$grading is a normalization to make the trivial knotoid have its single generator at grading $(0,0,0)$, instead of $(0,-1,0)$. By equation ~\eqref{equ:gradedeulers}, we have $W_K(t=-1,q,u)=\hat{\chi}(H(K))=\hat{\chi}(C(K))=T_K(q,u)$.
\end{proof}

\subsection{Properties of the winding homology} We examine the behavior of the winding homology $H(K)$ under orientation reversal, taking mirror image, taking symmetric reflection (see Figure ~\ref{fig:bifoils}) and knotoid multiplication. We will also consider the connected sum and disjoint union of a knotoid (with single component in the case of connected sum) and a knot.
\begin{prop}\label{prop:propertiesofw}
Let $\textnormal{Rev}(K)$ be the same as the multi-knotoid $K$ but with reversed orientation on all components, $\textnormal{Mir}(K)$ be the mirror image of $K$, and $\textnormal{Sym}(K)$ be the symmetric reflection of $K$, then
\begin{align}
    W_{\textnormal{Rev}(K)}(t,q,u)&=W_{K}(t,q,u),\\
    W_{\textnormal{Mir}(K)}(t,q,u)&=W_K(t^{-1},q^{-1},u),\\
    W_{\textnormal{Sym}(K)}(t,q,u)&=W_K(t^{-1},q^{-1},u^{-1}).
\end{align}
\end{prop}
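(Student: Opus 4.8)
The plan is to prove each of the three identities by exhibiting an explicit isomorphism (or anti-isomorphism) between the chain complexes $C(K)$ and $C(K')$ for $K' = \mathrm{Rev}(K), \mathrm{Mir}(K), \mathrm{Sym}(K)$ that respects the relevant gradings up to the stated substitutions, and then passing to homology. The point is that each of these symmetry operations acts on diagrams in a way that sends states to states and local cobordisms to local cobordisms, so the combinatorics of the cube is preserved; only the three gradings get reindexed.

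First I would handle $\mathrm{Rev}(K)$. Reversing the orientation of every component changes none of the following data: the set of crossings, the sign of each crossing (a crossing's sign is unchanged when \emph{both} strands are reversed), the set of states, the number of components of each state, and the cube of edge maps. Hence $C(\mathrm{Rev}(K)) = C(K)$ as bigraded complexes in the $(i,q)$-gradings. For the $u$-grading I would invoke Lemma~\ref{lem:shortcutformula}: each term of $\mu(v)$ is $\delta_{sign(c),(-1)^{s(c)+1}}(-1)^{s(c)}\phi_c(\gamma)\lambda_c(\gamma)$, and reversing all orientations leaves $sign(c)$, $s(c)$, and $\lambda_c(\gamma)$ untouched while \emph{simultaneously} flipping the orientation of $K$ and of $\gamma\subset k_s$, so $\phi_c(\gamma)$ is also unchanged. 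Therefore $\mu$ is preserved on the nose, giving $W_{\mathrm{Rev}(K)} = W_K$.

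Next, $\mathrm{Mir}(K)$: the mirror swaps over- and under-crossings, hence swaps positive and negative crossings and interchanges $0$- and $1$-smoothings at every crossing. This identifies the state $s$ of $K$ with the state $\bar s = 1 - s$ of $\mathrm{Mir}(K)$, with $|\!|\bar s|\!| = n - |\!|s|\!|$ and $n_\pm(\mathrm{Mir}(K)) = n_\mp(K)$; consequently the homological grading $i$ negates and $C_i(\mathrm{Mir}(K)) \cong C_{-i}(K)$. On the $q$-grading, the merge/divide maps are intertwined, which on $\A$ amounts to the degree-negating involution $1 \leftrightarrow X$ (this is the standard Khovanov mirror argument, Section~7.2 of~\cite{Khovanov}), and combined with $i \mapsto -i$ and $n_+ \leftrightarrow n_-$ one checks the normalization constant $i + n_+ - n_- + 1$ behaves correctly so that $q \mapsto -q$. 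For the $u$-grading: mirroring does not reverse orientations, so $\phi_c$ is unchanged, $\lambda_c$ flips sign (over $\leftrightarrow$ under), $sign(c)$ flips, $s(c)$ flips — and tracking the Kronecker delta and the two sign flips $\delta_{sign(c),(-1)^{s(c)+1}}(-1)^{s(c)}\lambda_c(\gamma)$, I expect $\mu$ to come out unchanged, so $u$ is fixed. Thus $W_{\mathrm{Mir}(K)}(t,q,u) = W_K(t^{-1},q^{-1},u)$. Finally, $\mathrm{Sym}(K)$ is, as in Figure~\ref{fig:bifoils}, the composition of the mirror with an orientation-reversing reflection of the plane; alternatively, comparing with the established fact that $\mathrm{Sym}(K)_\pm$ relates to $K_\mp$ and that symmetric reflection negates the relevant winding-number quantity, I would argue that $\mathrm{Sym}$ acts exactly like $\mathrm{Mir}$ on the $(i,q)$-data (so $t \mapsto t^{-1}$, $q \mapsto q^{-1}$) but \emph{additionally} reverses the orientation-sensitive crossing/shortcut pairings so that every term in~\eqref{equ:upsilon} negates, giving $\mu \mapsto -\mu$ and hence $u \mapsto u^{-1}$.

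The main obstacle I anticipate is the bookkeeping in the $u$-grading under $\mathrm{Mir}$ and $\mathrm{Sym}$: one must carefully verify via~\eqref{equ:upsilon} exactly which of the four inputs ($sign(c)$, $s(c)$, $\phi_c(\gamma)$, $\lambda_c(\gamma)$) each operation flips, confirm that the Kronecker delta $\delta_{sign(c),(-1)^{s(c)+1}}$ (i.e., the condition that $c$ contributes at all) is preserved, and chase the product of signs term-by-term. A cleaner route, which I would prefer if it works out, is to use the geometric definition: $\mathrm{Mir}$ fixes orientations and leaves the shortcut-crossing numbers $k_s\cdot\alpha$ and $K\cdot\alpha$ invariant (a mirror is still an orientation-preserving homeomorphism of $S^2$), whereas $\mathrm{Sym}$ is realized by an orientation-\emph{reversing} homeomorphism of $S^2$, which negates all algebraic intersection numbers and hence sends $\mu \mapsto -\mu$. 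The subtlety to resolve with care is the interplay between reflecting $S^2$ and the choice of "right-hand side" implicit in the canonical shortcut, but since $\mu(v)$ was shown to be independent of the choice of $\alpha$, this should cause no trouble. Everything else — the identification of cubes, the sign-in-differential issue handled by the exterior-power factor, and passage to homology — is routine once the grading dictionary is pinned down.
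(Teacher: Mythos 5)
Your proposal is correct and follows essentially the same route as the paper: orientation reversal changes nothing (the paper argues via the reversed shortcut, you via Lemma~\ref{lem:shortcutformula}, both fine); for the mirror, the state correspondence $s\leftrightarrow\bar s$ with the $1\leftrightarrow X$ swap on closed components gives $i\mapsto -i$, $q\mapsto -q$, while the unchanged underlying planar shadow keeps the shortcut intersection numbers and hence $\mu$ fixed; and $\mathrm{Sym}$ additionally negates all intersection numbers, so $\mu\mapsto -\mu$. The only presentational difference is that you propose an explicit chain-level (anti-)isomorphism and pass to homology, whereas the paper invokes equation~\eqref{equ:gradedeulers} to reduce to comparing gradings of generators — your framing is, if anything, the more careful one.
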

\begin{proof}
For the first identity, reversal of orientation on all components preserves the positions of all states in the cube of resolutions as well as the homological degrees and $q$-degrees. Since the leg and the head of the knotoid is switched under orientation reversal, the shortcut also reverses orientation. Thus, $k_s\cdot \alpha$ and $K\cdot \alpha$ do not change.

For the second, a 0-smoothing for a crossing in $K$ is a 1-smoothing for the same crossing in $\textnormal{Mir}(K)$, and vice versa. Therefore, there is a one-to-one correspondence between the states of $K$ and $\textnormal{Mir}(K)$. The same vector spaces are assigned to these states. By equation ~\eqref{equ:gradedeulers}, one can ignore the differentials and only consider the gradings of the generators of the chain complexes $C(K)$ and $C(\textnormal{Mir}(K))$. Indeed, there is a one-to-one correspondence of generators as follows: for a generator $v \in C(K)$ coming from state $s$, consider the state $\overline{s}$ of $\textnormal{Mir}(K)$ obtained from $s$ by changing 0's and 1's. Then we define $\overline{v} \in C(\textnormal{Mir}(K))$ as coming from the state $\overline{s}$ and obtained from $v$ by switching 1's and $X$'s only on the closed components of the resolution. Then we have 
\begin{align}
    i(\overline{v})&=|\!|\overline{v}|\!|-\overline{n_-}=n-|\!|v|\!|-n_+ \\
                &=-(|\!|v|\!|-n_-)=-i(v), \nonumber  \\
    q(\overline{v})&=\deg(\overline{v})+i(\overline{v})+\overline{n_+}-\overline{n_-}+1\\
                &=\deg(\overline{w})-1-i(v)-n_++n_-+1 \nonumber \\
                &=-\deg(w)-i(v)-n_++n_- \nonumber \\
                &=-\deg(v)-1-i(v)-n_++n_- =-q(v), \nonumber 
\end{align}
where bars over refer to terms for $\textnormal{Mir}(K)$ and $w$'s are the tensor factors of $v$ associated only with the closed components of the resolutions. Since taking the mirror image does not change the intersection numbers with the canonical shortcut, it follows that $\mu(\overline{v})=\mu(v)$.

The argument for the third identity is similar to that of the second identity except that the $u$-degrees also switch sign.
\end{proof}

For (multi-)knotoids $K_1$, $K_2$, and their product $K_1\cdot K_2$, we have 
\begin{equation}
    C(K_1\cdot K_2)\otimes X\A\{1\} \cong C(K_1)\otimes C(K_2),
\end{equation}where $\{1\}$ indicates a shift in the $q$-grading up by 1, that is, the single generator of $X\A\{1\}$ has $q$-grading 0. Using the K\"{u}nneth formula, we obtain
\begin{prop}\label{prop:product formula}
$\displaystyle H_{x,y}^z(K_1\cdot K_2)=\bigoplus_{i,j,k \in \Z} (H_{i,j}^k(K_1)\otimes H_{x-i,y-j}^{z-k}(K_2))$
\end{prop}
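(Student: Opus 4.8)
The plan is to deduce Proposition~\ref{prop:product formula} from the displayed chain-level isomorphism $C(K_1\cdot K_2)\otimes X\A\{1\}\cong C(K_1)\otimes C(K_2)$ together with the K\"unneth theorem over the field $\Q$. First I would make the chain-level statement precise as an isomorphism of triply-graded complexes: I would check that it respects all three gradings. The homological and $q$-gradings are handled as in the ordinary Khovanov product formula; the point of the $\{1\}$ shift is exactly to correct the $q$-grading, since in $C(K_1\cdot K_2)$ the segment component carries the one-dimensional space $X\A$ (with generator in $\deg=-1$), whereas in the tensor product $C(K_1)\otimes C(K_2)$ there are two segment components contributing two copies of $X\A$, one of which must be absorbed. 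For the $u$-grading I would argue that a state $s$ of $K_1\cdot K_2$ corresponds to a pair $(s_1,s_2)$ of states of $K_1$ and $K_2$, that the respective segment components $k_s$ and $k_{s_1},k_{s_2}$ are glued along the connect-sum sphere away from all crossings, and that one can take the canonical shortcut of $K_1\cdot K_2$ to be the concatenation of the canonical shortcuts of $K_1$ and $K_2$; then by Lemma~\ref{lem:shortcutformula} the contributions to $\mu$ are local to the crossings and simply add, so $\mu(v)=\mu(v_1)+\mu(v_2)$ under the correspondence $v\leftrightarrow v_1\otimes v_2$. I would also note the differential matches because every saddle cobordism of $K_1\cdot K_2$ occurs within one of the two summand regions, and the merge/divide/anticurl maps plus the sign conventions (the ordered set of crossings being the concatenation of the two ordered sets) are compatible with the tensor product of complexes; here one uses that the connect sum does not create or destroy crossings.

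Once the chain-level isomorphism of triply-graded complexes is established, I would apply the K\"unneth formula. Since we work over the field $\Q$, $H_\ast(C(K_1)\otimes C(K_2))\cong H_\ast(C(K_1))\otimes H_\ast(C(K_2))$ with no Tor terms, and the isomorphism is graded in each of the three gradings, giving
\[
H_\ast\big(C(K_1\cdot K_2)\otimes X\A\{1\}\big)_{x,y}^{z}\;\cong\;\bigoplus_{i,j,k\in\Z} H_{i,j}^k(K_1)\otimes H_{x-i,y-j}^{z-k}(K_2).
\]
On the left, tensoring with the one-dimensional space $X\A\{1\}$, whose single generator sits in tridegree $(0,0,0)$, is the identity on homology up to this trivial factor, so $H_\ast(C(K_1\cdot K_2)\otimes X\A\{1\})_{x,y}^z\cong H_{x,y}^z(K_1\cdot K_2)$. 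Combining these two identifications yields the claimed formula.

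The main obstacle I expect is the bookkeeping in the chain-level isomorphism, specifically verifying that the $u$-grading and the signs are correctly matched under the decomposition $s\leftrightarrow(s_1,s_2)$. The $u$-grading requires justifying that the canonical shortcut behaves well under connect sum and invoking Lemma~\ref{lem:shortcutformula} to see the additivity of $\mu$; one must be slightly careful because the lemma is stated for knotoid diagrams rather than multi-knotoids, but since $K_1\cdot K_2$ is again a (multi-)knotoid with a single segment component this applies after choosing the canonical shortcut for the product. The sign issue reduces, as in Remark~\ref{rem:signs}, to checking that the cube of resolutions of $K_1\cdot K_2$ is the product of the cubes of $K_1$ and $K_2$ with the wedge-power sign conventions respecting the concatenated ordering of crossings; this is the standard argument for the Khovanov product formula and is routine once set up. Everything else—the $q$-shift and the appeal to K\"unneth over a field—is straightforward.
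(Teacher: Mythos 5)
Your proposal is correct and follows essentially the same route as the paper, which simply records the chain-level isomorphism $C(K_1\cdot K_2)\otimes X\A\{1\}\cong C(K_1)\otimes C(K_2)$ and invokes the K\"unneth formula over $\Q$; your write-up supplies the grading and sign bookkeeping that the paper leaves implicit. One small caveat: for the additivity of the $u$-grading it is cleaner to argue directly from the definition $\mu(v)=k_s\cdot\alpha-K\cdot\alpha$ using the concatenated canonical shortcut (the intersection points split between the two hemispheres of the gluing), since Lemma~\ref{lem:shortcutformula} is stated only for single-component knotoid diagrams and need not apply verbatim when $K_1$ or $K_2$ is a multi-knotoid.
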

\noindent In other words, $W_{K_1\cdot K_2}=W_{K_1}\cdot W_{K_2}$. In particular, $W$ is invariant under change of order in knotoid multiplication.

For a knotoid $K$ (single component) and a knot $\kappa$, the connected sum $K\# \kappa$ is equivalent to the product $K\cdot \kappa^\bullet$. Thus, one can apply the Proposition ~\ref{prop:product formula}, to get a formula for $H_{i,j}^k(K\# \kappa)$.

For a multi-knotoid $K$ and a knot $\kappa$, the chain complex of their disjoint sum is given by
\begin{equation}
    C(K\sqcup\kappa)\cong C(K)\otimes CKh(\kappa),
\end{equation}
where $CKh(\kappa)$ is the unreduced Khovanov homology of $\kappa$. Here, the segment component of the knotoid $K\sqcup\kappa$ is the same as the segment component of $K$, $\kappa$ is considered a closed component of $K\sqcup\kappa$. Then, we similarly obtain the following, by K\"{u}nneth formula.
\begin{prop}
$\displaystyle H_{x,y}^z(K\sqcup \kappa)=\bigoplus_{i,j,k \in \Z} (H_{i,j}^k(K)\otimes \widehat{Kh}_{x-i,y-j}^{z-k}(\kappa))$,
where $\widehat{Kh}(\kappa)$ is the unreduced Khovanov homology of the knot $\kappa$.
\end{prop}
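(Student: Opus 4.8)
The plan is to deduce the proposition from the chain-level isomorphism $C(K\sqcup\kappa)\cong C(K)\otimes CKh(\kappa)$ recorded just above the statement, upgraded to a trigraded isomorphism, together with the K\"unneth theorem over $\Q$. To make the chain isomorphism precise, represent $K\sqcup\kappa$ by a diagram in which $K$ and $\kappa$ lie in disjoint disks of $S^2$ and order the crossings so that the crossings of $K$ precede those of $\kappa$. A state of $K\sqcup\kappa$ is then a pair $(s,s')$ with $s\in S(K)$ and $s'\in S(\kappa)$; the segment component of the resolution $(s,s')$ is that of $s$ (so it carries the factor $X\A$), its closed components are those of $s$ together with those of $s'$ (each carrying $\A$), and $\bigwedge^{\|(s,s')\|}\V_{(s,s')}\cong\bigwedge^{\|s\|}\V_s\otimes\bigwedge^{\|s'\|}\V_{s'}$ because $\V=\V(K)\oplus\V(\kappa)$ with the basis of $\V(K)$ ordered first. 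This identifies each summand of $C(K\sqcup\kappa)$ with the corresponding summand of $C(K)\otimes CKh(\kappa)$, where $CKh(\kappa)$ is the unreduced Khovanov complex, i.e. the one assigning $\A$ rather than $X\A$ to every circle of every resolution of $\kappa$.

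Next I would check that this identification is compatible with the differentials and with all three gradings. Because $K$ and $\kappa$ sit in disjoint disks, no edge map of $C(K\sqcup\kappa)$ changes a crossing of $K$ and a crossing of $\kappa$ simultaneously; a saddle at a crossing of $\kappa$ always merges or splits two circles (hence is an $m_1$ or a $\Delta_1$), and a saddle at a crossing of $K$ is as in the definition of $C(K)$. With the crossings of $K$ ordered first, the sign picked up by an edge map from the exterior-power factors is exactly the Koszul sign, so under the identification the differential becomes $d_{C(K)}\otimes\mathrm{id}+(-1)^{\|s\|}\,\mathrm{id}\otimes d_{CKh(\kappa)}$, the differential of $C(K)\otimes CKh(\kappa)$. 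The homological grading is additive since $\|(s,s')\|=\|s\|+\|s'\|$ and $n_\pm(K\sqcup\kappa)=n_\pm(K)+n_\pm(\kappa)$; the internal degree $\deg(\cdot)$ is additive; and the $+1$ normalization in the $q$-grading occurs exactly once, on the unique $X\A$ factor, which lies on the $K$-side, so the $q$-grading is additive with $CKh(\kappa)$ carrying the unshifted Khovanov grading. Finally, since $\kappa$ lies in a disk disjoint from $K$, the canonical shortcut $\alpha$ of $K\sqcup\kappa$ can be taken to miss $\kappa$; then the segment component of every resolution, and $\alpha$ itself, lie in the $K$-disk, so $\mu(v\otimes v')=k_{(s,s')}\cdot\alpha-(K\sqcup\kappa)\cdot\alpha=\mu(v)$, i.e. $CKh(\kappa)$ is concentrated in $u$-grading $0$.

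With the trigraded chain isomorphism in hand, the proposition follows from the K\"unneth theorem: since we work over the field $\Q$ there is no Tor term, so $H_{x,y}^z\big(C(K)\otimes CKh(\kappa)\big)\cong\bigoplus_{i,j,k\in\Z}H_{i,j}^k(K)\otimes H_{x-i,\,y-j}^{\,z-k}(CKh(\kappa))$, and $H(CKh(\kappa))=\widehat{Kh}(\kappa)$ sits in $u$-grading $0$ by the previous step. The one place that requires genuine care — and which I expect to be the main obstacle — is the sign bookkeeping: one must verify that ordering the crossings of $K$ ahead of those of $\kappa$ really does make the exterior-power signs of the two cubes assemble into the Koszul sign of the tensor product, so that $d_{C(K\sqcup\kappa)}$ is literally the tensor-product differential; everything else is routine grading arithmetic.
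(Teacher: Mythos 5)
Your proposal is correct and follows essentially the same route as the paper, which simply asserts the chain-level isomorphism $C(K\sqcup\kappa)\cong C(K)\otimes CKh(\kappa)$ and invokes the K\"unneth formula. You have merely filled in the details the paper leaves implicit (the crossing ordering, the Koszul sign from the exterior-power factors, and the additivity of the three gradings with $CKh(\kappa)$ concentrated in $u$-grading $0$), all of which check out.
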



\section{Invariance of \texorpdfstring{\(H(K)\)}{}}\label{sec.invariance}

\subsection{Independence of the ordering of crossings}\label{sec:indpfromorder}
\begin{defn}
For an $m$ dimensional vector space $V$ with an ordered basis, and the one-dimensional exterior power $\bigwedge^m V$, the \emph{sign} $\text{sgn}(x)$ of a generator $x$ of the exterior power is the sign of the permutation that takes $x$ to the canonical generator $[x]$ of the exterior power. Here, the word ``permutation" refers to the permutation of the basis elements of $V$ as wedge factors in the elements of $\bigwedge^m V$. In other words, $\text{sgn}(x)=(-1)^{N(\sigma)}$ where $N(\sigma)$ is the number of adjacent transpositions in $\sigma$, which sends $x$ to the canonical generator $[x]$ by ordering the factors of $x$.
\end{defn}
Let $C=\{c_1,\dots,c_n\}$ with $c_1<\dots<c_n$, and $D=\{d_1,\dots,d_n\}$ with $d_1<\dots<d_n$ be two ordered sets denoting the crossings of $K$. The correspondence between the two sets is given by the map $\rho: C\rightarrow D$ with $\rho(c_i)=d_{\pi(i)}$, where $\pi \in \text{Sym}(n)$ is a permutation of $I=\{1,\dots,n\}$. For $J\subset I$, let $C_J=\{c_j\in C|j\in J \}$ be a subset of $C$ with inherited ordering, and $\V_J$ be a $\Q$-vector space with basis $C_J$. Similarly, we denote $\rho(C_J)$ as $D_{\pi(J)}$, and the associated vector space as $\V_{\pi(J)}$. Writing $C_J=\{c_{j_1},\dots,c_{j_{|J|}}\}$ with $j_1<\dots<j_{|J|}$, we set $c^*=c_{j_1} \wedge \dots \wedge c_{j_{|J|}}$ so that $c^*$ is the canonical generator $[c^*]\in \bigwedge^{|J|}\V_J$. Let $d^*=\rho(c^*)=d_{\pi(j_1)}\wedge \dots \wedge d_{\pi(j_{|J|})}$ with $[d^*]$ denoting the canonical generator in $\bigwedge^{|J|}\V_{\pi(J)}$, which is equal to $\text{sgn}(d^*)d^*$.

Suppose that $C(K)$ is constructed using the crossing set $C$, and $C'(K)$ by using $D$. Clearly, the generators of the chain groups $C_i(K)$ and $C'_i(K)$ are identical except for the exterior power factors. We define the map $\Phi:C(K)\rightarrow C'(K)$ such that $\Phi(v\otimes [c^*])=\text{sgn}(d^*) v'\otimes [d^*]$, where $v$, and $v'$ are identical generators of belonging to summands in $C(K)$, and $C'(K)$ of isotopic states. To show that $\Phi$ is a chain map, we only need to show that the following diagram commutes for all $c_x\not\in C_J$
\begin{center}
    \begin{tikzcd}[row sep=.5in, column sep=1in]
        \makebox{$[c^*]$} \arrow[scale=2,r, "\makebox{$(-1)^{\text{pos}_{c^*}(c_x)}$}"] \arrow[d,"\text{sgn}(d^*)"]
        &\makebox{$[c_x \wedge c^* ]$}\arrow[d, "\text{sgn}(d_{\pi(x)}\wedge d^*)"] \\
        \makebox{$[d^*]$} \arrow[r, "(-1)^{\text{pos}_{d^*}(d_{\pi(x)})}"]
        & \makebox{$[d_{\pi(x)}\wedge d^*]$}
    \end{tikzcd}
\end{center}
where $\text{pos}_{c^*}(c_x)=N(\sigma_{c^*}(c_x))$, i.e. the number of adjacent transpositions of the permutation $\sigma_{c^*}(c_x)$ bringing $c_x$ from the leftmost position to its proper position in the canonical generator $[c_x \wedge c^*]$. Similarly, $\text{pos}_{d^*}(d_{\pi(x)})=N(\sigma_{d^*}(d_{\pi(x)}))$. Commutativity of the diagram follows from the equality
\begin{equation}
    \text{sgn}(d_{\pi(x)}\wedge d^*)=(-1)^{\text{pos}_{d^*}(d_{\pi(x)})+\text{pos}_{c^*}(c_x)}\text{sgn}(d^*).
\end{equation}
To see this, the map $\rho: [c_x \wedge c^* ] \mapsto [d_{\pi(x)}\wedge d^*]$ is written as the composition 
\begin{equation*}
    [c_x \wedge c^* ] \xmapsto{(-1)^{\text{pos}_{c^*}(c_x)}} c_x \wedge c^* \xmapsto{\text{sgn}(d^*)} d_{\pi(x)}\wedge d^* \xmapsto{(-1)^{\text{pos}_{d^*}(d_{\pi(x)})}} [d_{\pi(x)}\wedge d^*].
\end{equation*}
We end by noting that $\Phi$ is invertible.

\subsection{Independence of the choice of a shortcut}
In the construction of the chain complex $C(K)$, a choice of shortcut $\alpha$ is made to define the $u$-gradings of all generators $v$ of the vector space associated with the state $s$. More precisely, the $u$-grading of all generators $v\in \A^{\otimes |s|-1}\otimes X\A \otimes \bigwedge^{|\!|s|\!|}\V_s$ is given by $\mu(v)=k_s\cdot \alpha-K\cdot \alpha$. 

Any two shortcuts $\alpha_1$ and $\alpha_2$ for a (multi-)knotoid $K$ in $S^2$ are related by
\begin{itemize}
    \item[i.] passing a small arc of the shortcut through an arc of $K$ that creates two extra intersection points,
    \item[ii.] passing the shortcut through a crossing of $K$, or
    \item[iii.] adding a spiral to the shortcut near the head or leg of $K$ that creates an extra intersection point.
\end{itemize}
For the first two of these moves, $k_s\cdot \alpha_1=k_s\cdot \alpha_2$ and $K\cdot \alpha_1=K\cdot \alpha_2$. For the third move, we have $k_s\cdot \alpha_1-K\cdot \alpha_1=k_s\cdot \alpha_2-K\cdot \alpha_2$, since the local orientations of $k_s$ and $K$ near the head or the leg agree for all states. Therefore, the $u$-gradings of the generators are preserved, and the triply graded chain complexes obtained from $\alpha_1$ and $\alpha_2$ are identical.

\subsection{Invariance under the Reidemeister move I}
To prove invariance under the Reidemeister moves, we will make use of the following well-known fact; also see Section 2.1 of ~\cite{BHL}.
\begin{lem}[Zigzag or cancellation lemma]
Let $C$ be a freely generated chain complex. For generators $x$, $y$, $a\in C$, suppose that \begin{equation}
    d(x)=y+d_{\hat{y}}(x) \,\, \text{ and }\,\,\, d(a)=a_y y+d_{\hat{y}}(a),
\end{equation}
where $a_y$ denotes the coefficient of $y$ in $d(a)$, and $d_{\hat{y}}(x)$ (resp. $d_{\hat{y}}(a)$) denotes the image of $x$ (resp. $a$) under $d$, in the free subgroup of $C$ with all the same generators except $y$. Then $(C,d)$ is chain homotopy equivalent to $(C',d')$ where $C'= C/\langle x,y\rangle$, and $d'$ is given by
\begin{equation}
    d'([a])=[d_{\hat{y}}(a)]-a_y[d_{\hat{y}}(x)].
\end{equation}
\end{lem}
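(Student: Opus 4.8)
The plan is to produce explicit chain maps between $(C,d)$ and $(C',d')$ and an explicit chain homotopy, following the standard Gaussian-elimination argument. Write $C = C' \oplus \langle x \rangle \oplus \langle y \rangle$ as graded vector spaces, where I identify $C'$ with the span of all generators other than $x$ and $y$. First I would set up notation: for $a \in C'$ decompose $d(a) = a_y\, y + d_{\hat y}(a)$ as in the statement, noting that $d_{\hat y}(a) = d'_0(a) + (\text{coefficient of } x)\, x$ where $d'_0$ is the "naive" part landing in $C'$; the formula $d'([a]) = [d_{\hat y}(a)] - a_y [d_{\hat y}(x)]$ is exactly what corrects for the $x$-component so that the result lands in $C'$ and squares to zero. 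I would define the projection $p : C \to C'$ by $p(a) = a - (\text{coeff of } x \text{ in } a)\, x$ on $C' \oplus \langle x\rangle$ — more precisely $p$ kills $y$ and sends $x \mapsto -d_{\hat y}(x)$ (so that $p$ undoes the zigzag), and the inclusion $\iota : C' \to C$ by $\iota(a) = a - a_y\, x$ (correcting each generator so that its differential has no $y$-component contribution causing trouble).

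The key steps, in order: (1) check $d'$ is well-defined, i.e. $d'(a) \in C'$; this is immediate since the $x$-coefficient of $d_{\hat y}(a) - a_y d_{\hat y}(x)$ cancels by construction, using that the $x$-coefficient of $d_{\hat y}(x)$ is the $x$-coefficient of $d(x)$. (2) Verify $(d')^2 = 0$; this follows from $d^2 = 0$ after projecting, or can be checked directly by expanding $d'(d'(a))$ and using $d^2(a) = 0$ together with $d^2(x) = 0$ (which gives $d(y) = -d(d_{\hat y}(x))$, the relation that makes everything consistent). (3) Check $\iota$ and $p$ are chain maps: $d' p = p\, d$ and $d\, \iota = \iota\, d'$ — each is a short computation using the definitions and $d^2 = 0$ on $x$. (4) Check $p\,\iota = \mathrm{id}_{C'}$ on the nose. (5) Produce the homotopy $h : C \to C$ with $\iota\, p - \mathrm{id}_C = dh + hd$; the natural choice is $h(y) = x$ and $h = 0$ on $C' \oplus \langle x\rangle$. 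Then verify this identity separately on the three summands $C'$, $\langle x \rangle$, $\langle y \rangle$ by direct expansion.

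I would present the verifications of (1)–(5) compactly, since each is a routine linear-algebra computation once the maps are written down, and I would emphasize that the only structural input is $d^2 = 0$, specialized to the three generators $x$, $y$, and an arbitrary $a \in C'$.

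The main obstacle is purely bookkeeping: being careful that "$d_{\hat y}$" is applied consistently (it is a fixed linear map $C \to C$, namely $d$ followed by projection away from $y$), and that the sign in $h(y) = x$ versus the sign in $\iota(a) = a - a_y x$ match up so that the homotopy identity holds with the correct sign. There is no conceptual difficulty — this is the abstract form of Gaussian elimination on a chain complex — so the write-up should be short; the care needed is entirely in tracking which component of $d$ lands where and confirming the homotopy equation on each of the three pieces of the decomposition of $C$.
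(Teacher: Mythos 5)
Your approach is the standard Gaussian-elimination argument and is essentially the same as the paper's: the paper's ``two changes of basis'' are precisely your $\iota(a)=a-a_y x$ together with $y\rightsquigarrow y+d_{\hat{y}}(x)$, after which the acyclic summand $\langle x, d(x)\rangle$ splits off, and your $p$ and $h$ are the resulting retraction data. So the strategy is fine; the issue is with one of your explicit formulas. The projection you wrote down --- ``$p$ kills $y$ and sends $x\mapsto -d_{\hat{y}}(x)$'' --- has the roles of $x$ and $y$ reversed: since $d_{\hat{y}}(x)$ lives in the same homological degree as $y$ (one above $x$), the assignment $x\mapsto -d_{\hat{y}}(x)$ does not even preserve the grading, and one checks directly that $p\,d(x)=[d_{\hat{y}}(x)]\neq 0=d'\,p(x)$, so your step (3) would fail as stated. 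The correct projection is $p(x)=0$, $p(y)=-[d_{\hat{y}}(x)]$, $p(a)=[a]$; with that choice $p\,d=d'\,p$ follows on $a$ from the definition of $d'$, on $x$ by cancellation, and on $y$ from the relation $d(y)=-d(d_{\hat{y}}(x))$ coming from $d^2(x)=0$. Relatedly, with $\iota(a)=a-a_y x$ you need $h(y)=-x$ (not $+x$) for the identity $\iota p-\mathrm{id}=dh+hd$, as one sees already on a generator $a$: the left side is $-a_y x$ while $h(d(a))=a_y h(y)$. These are exactly the bookkeeping points you flagged as the main obstacle, and once corrected the verification in steps (1)--(5) goes through.
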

\begin{proof} We give a diagrammatic proof in Figure ~\ref{fig:zigzag}. The first two steps are change of basis. The last one is a chain homotopy equivalence with inclusion and projection maps. 
\end{proof}
\begin{figure}[hbt!]
    \centering
    \includegraphics[scale=.65]{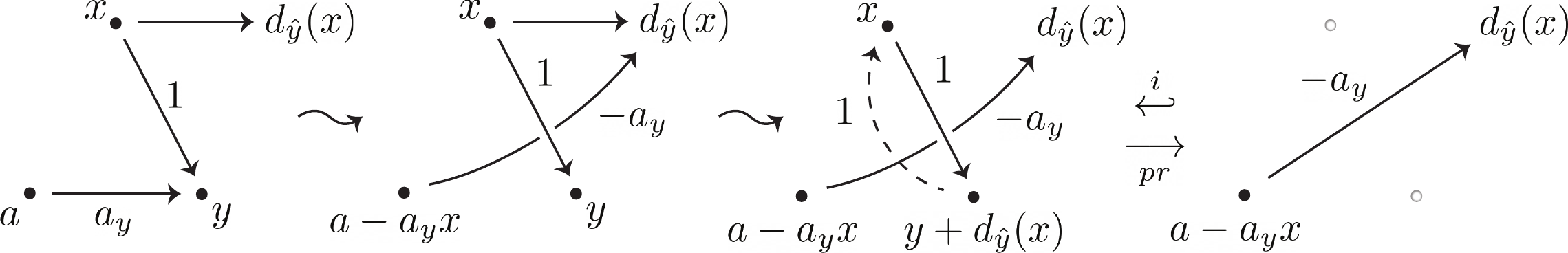}
    \caption{Proof of the zigzag lemma.}
    \label{fig:zigzag}
\end{figure}

Now, we show that there is a ($q$,$u$)-grading preserving chain homotopy equivalence from the chain complex $C(\NegCros)$ with a negative twist to $C(\Arc)$ with the twist removed. Without loss of generality, the crossing in question, labeled as $c$, is assumed to be the first in the ordering. To simplify the notation, from now on, we drop the wedge product symbols, and write $[abc\dots]$ instead of $[a\wedge b\wedge c\wedge \dots]$. Note that the chain complexes $C(\NegCros)$ and $C(\NoCirc)[\![-1]\!] \oplus C(\WithCirc)$ are isomorphic as triply-graded vector spaces, where the number inside the square bracket represents the amount of homological grading shift, i.e. $C_{i,j}^k(\NoCirc)[\![-1]\!]=C_{i+1,j}^k(\NoCirc)$. Therefore; the generators of $C(\NegCros)$, together with (some of) the differentials between them, is diagrammatically listed in Figure ~\ref{fig:negtwist}.

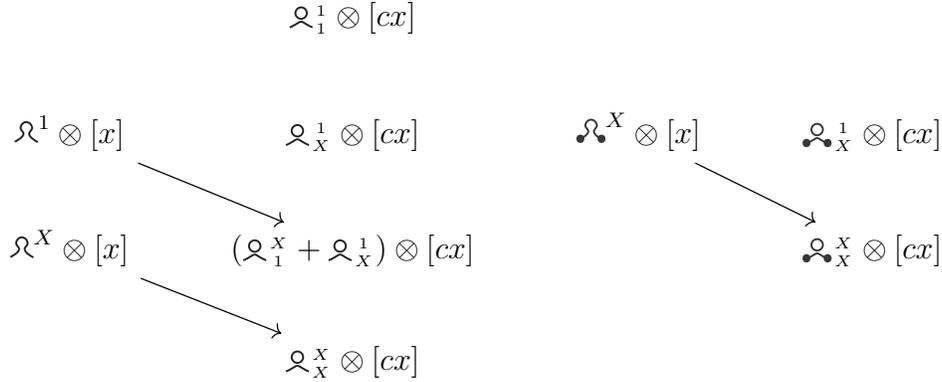
\begin{figure}[hbt!]
    \centering
       \begin{tikzcd}
            & \vcenter{\hbox{\WithCirc}}\myfrac{1}{1}\otimes [cx] & & \\
 \NoCirc^1 \otimes [x]\arrow[rd] & \vcenter{\hbox{\WithCirc}}\myfrac{1}{X}\otimes [cx] & \NoCircSeg^X \otimes [x] \arrow[rd] & \vcenter{\hbox{\WithCircSeg}}\myfrac{1}{X}\otimes [cx]\\
 \NoCirc^X \otimes [x] \arrow[rd] & (\vcenter{\hbox{\WithCirc}}\myfrac{X}{1}+\vcenter{\hbox{\WithCirc}}\myfrac{1}{X})\otimes [cx] & & \vcenter{\hbox{\WithCircSeg}}\myfrac{X}{X}\otimes [cx]\\
  & \vcenter{\hbox{\WithCirc}}\myfrac{X}{X}\otimes [cx] & &
    \end{tikzcd}
    \caption{Components of $d$, for the negative twist, between the generators that are distinct inside the local disk.}
    \label{fig:negtwist}
\end{figure}
Here, each small drawing with labels ($1$ or $X$) represents a set of generators that agrees with the local picture of the state and the label(s) on the component(s). The dotted intervals (on the 3rd and 4th columns) mean that the pictures are part of the segment components. Also, there are no negative signs on the arrows since we assumed $c$ to be the first crossing in the ordering. Using the change of basis on $C(\WithCirc)[\![1]\!]$ as in the diagram, each arrow represents a bijection between the sets of generators. These bijections are referred as \emph{type 1 arrows}, which means that they are components of $d$ between generators with different local resolutions. We would like to cancel all type 1 arrows using the cancellation lemma such that the set of arrows gets smaller in a monotonic fashion. However, these arrows are only some components of the total differential $d$. For example, there could be other arrows among the generators represented by the set $\vcenter{\hbox{\WithCirc}}\myfrac{X}{X}\otimes [cx]$. Any component of $d$ that is not of type 1 is referred as an \emph{arrow of type 2}. It is possible that arrows of type 1 form zigzags when considered together with arrows of type 2; see Figure~\ref{fig:zigzagcube}.
\begin{figure}[hbt!]
    \centering
    \begin{tikzcd}
        & D_{100} \arrow[r, red] \arrow[rd]& D_{110}\arrow[from=dl, red, shift right=1ex, crossing over] \arrow[rd] &\\
    D_{000} \arrow[ru, blue, "\underline{3}", dashed] \arrow[r] \arrow[rd] & D_{010} \arrow[ru, crossing over, blue, "\underline{2}", near end, dashed] & D_{101} \arrow[r] & D_{111}\\
        & D_{001} \arrow[ru, blue, "\underline{2}", near end, dashed] \arrow[r] & D_{011}\arrow[from=ul, crossing over, red]\arrow[ru, blue, "\underline{1}", dashed] &
    \end{tikzcd}
    \caption{Dashed blue arrows are the type 1 arrows to be cancelled. Notice that the type 1 arrow from $D_{010}$ to $D_{110}$ together with the type two arrows from $D_{100}$ to $D_{110}$, and from $D_{010}$ to $D_{011}$ form the red zigzag. Therefore, cancelling the type 1 arrow from $D_{010}$ to $D_{110}$ first would create a new arrow from $D_{100}$ to $D_{011}$. This would be against the monotonic reduction strategy. Underlined numbers describe the proper cancellation order of type 1 arrows, so that the set of arrows gets smaller monotonically.}
    \label{fig:zigzagcube}
\end{figure}
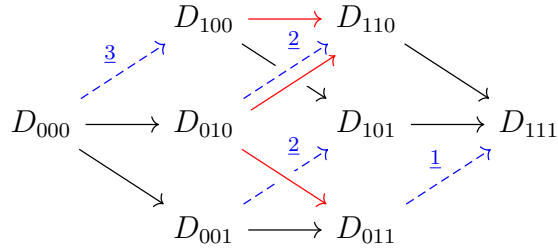
Cancellation of such type 1 arrows would introduce new arrows, which goes against the monotic reduction strategy. To avoid this possibility, we cancel the arrows in a specific order. Note that there is a single type 1 arrow and no type 2 arrows coming out of each generator at state $D_{01\cdots1}$. These arrows can be cancelled without creating new arrows. Since no generators are then left at state $D_{01\cdots1}$, there are no type 2 arrows coming out of generators at states $D_{001\cdots1}$, $D_{0101\cdots1}$, \dots, $D_{011\cdots10}$. Similarly, there is a single type 1 arrow coming out of each generator at these states to the states $D_{101\cdots1}$, $D_{1101\cdots1}$, \dots, $D_{111\cdots10}$; see Figure ~\ref{fig:zigzagcube}. Thus, these arrows can also be cancelled monotonically. Continuing by induction, all arrows of type 1 are cancelled in a monotonic fashion. The resulting chain complex is chain homotopy equivalent to $C(\Arc)$ by the maps:
\begin{align}
    \vcenter{\hbox{\WithCirc}}\myfrac{1}{1}\otimes [cx]& \xmapsto{(-1)^i} \vcenter{\hbox{\Arc}}^1\otimes[x]\label{equ:r1che1},\\
      \vcenter{\hbox{\WithCirc}}\myfrac{1}{X}\otimes [cx]& \xmapsto{(-1)^i} \vcenter{\hbox{\Arc}}^X\otimes[x],\\
        \vcenter{\hbox{\WithCircSeg}}\myfrac{1}{X}\otimes [cx]& \xmapsto{(-1)^i} \vcenter{\hbox{\ArcSeg}}^X\otimes[x],\label{equ:r1che3}
\end{align}
where the sign $(-1)^i$ makes commutative squares by accounting for the sign difference between the parallel edge maps caused by the extra crossing $c$ in the front.

These maps preserve the homological grading ($i(v)=|\!|v|\!|-n_-$) since the terms on the left come from a knotoid with an extra negative crossing. The $q$-grading is preserved similarly. The $u$-grading is preserved because the canonical shortcut has the same algebraic number of intersections with both diagrams on left and on the right of the maps ~\eqref{equ:r1che1}\,-~\eqref{equ:r1che3}, since the two intersections on the left cancel each other regardless of whether the shortcut goes around the small circle component or makes a twist inside the circle; see Figure~\ref{fig:R1uDeg}.
\begin{figure}[hbt!]
    \centering
    \includegraphics[width=.7\textwidth]{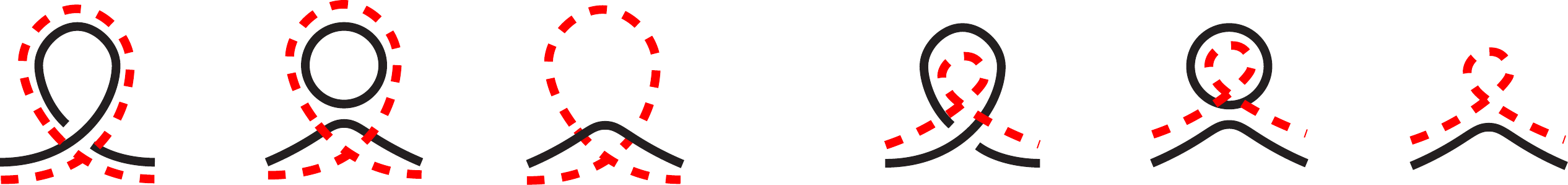}
    \caption{Possibilities for the intersections of the canonical shortcut (red dashed line) with a twist and its resolutions.} 
    \label{fig:R1uDeg}
\end{figure}
Finally, we point out that the chain homotopy equivalences obtained from the applications of the zigzag lemma above preserve the $(q,u)$-grading. In Figure ~\ref{fig:zigzag}, the generator $a$ is replaced by $a-a_y x$ after the cancellation. Since $a$ and $x$ map to the same generator, and $d$ preserves the $(q,u)$-grading, the replacement $a\rightsquigarrow a-a_y x$ also preserves the $(q,u)$-grading.

\begin{rem}
The cancellation process of arrows can be considered as a spectral sequence starting with the chain complex $E_0=C(\NegCros)$ and converging to $E_\infty=E_n=C(\Arc)$, where the page $E_{k\geq 1}$ is obtained by monotonically cancelling all type 1 arrows from all states $D_{0\alpha}$ with the string $\alpha$ containing $k-1$ zeros. The pages of the spectral sequence are chain homotopy equivalent.
\end{rem}

The case for the positive twist, $C(\reflectbox{\NegCros})\overset{c.h.e.}{\sim} C(\Arc)$, follows similarly from the diagrammatic listing of the generators and type 1 arrows as shown in Figure ~\ref{fig:postwist}.
\begin{figure}[hbt!]
    \centering
   \begin{tikzcd}
    \vcenter{\hbox{\WithCirc}}\myfrac{1}{1}\otimes [x] \arrow[rd] & & & \\
 \vcenter{\hbox{\WithCirc}}\myfrac{1}{X}\otimes [x] \arrow[rd] &\NoCirc^1 \otimes [x] & \vcenter{\hbox{\WithCircSeg}}\myfrac{1}{X}\otimes [x]\arrow[r] & \NoCircSeg^X \otimes [x]  \\
 (\vcenter{\hbox{\WithCirc}}\myfrac{X}{1}-\vcenter{\hbox{\WithCirc}}\myfrac{1}{X})\otimes [x] &\NoCirc^X \otimes [x]  & \vcenter{\hbox{\WithCircSeg}}\myfrac{X}{X}\otimes [x] & \\
  \vcenter{\hbox{\WithCirc}}\myfrac{X}{X}\otimes [x] & & & 
    \end{tikzcd} 
    \caption{Type 1 arrows for the positive twist.}
    \label{fig:postwist}
\end{figure}
After the cancellations, the resulting complex is equivalent to $C(\Arc)$ by the following maps:
\begin{align}
    (\vcenter{\hbox{\WithCirc}}\myfrac{X}{1}-\vcenter{\hbox{\WithCirc}}\myfrac{1}{X}) \otimes [x]&\xmapsto{\hspace{.3in}} \vcenter{\hbox{\Arc}}^1\otimes[x],\\
    \vcenter{\hbox{\WithCirc}}\myfrac{X}{X} \otimes [x]&\xmapsto{\hspace{.3in}} \vcenter{\hbox{\Arc}}^X\otimes[x],\\
    \vcenter{\hbox{\WithCircSeg}}\myfrac{X}{X} \otimes [x]&\xmapsto{\hspace{.3in}} \vcenter{\hbox{\ArcSeg}}^X\otimes[x].\label{equ:r1che6}
\end{align}

\subsection{Invariance under the Reidemeister move II} In the picture $\RIIcros$, let the left and right crossings be $c_1$ and $c_2$, respectively. We assume that $c_1<c_2<\cdots$. Using the cancellation lemma again, we show that $C(\RIIcros )\overset{c.h.e.}{\sim}C(\RIInocros)$. In contrast to the previous case, there are four boundary points of the (local) tangle. A complete resolution can connect these end points in 8 different ways outside the local picture -- 2 come from those that involve only closed components and 6 from those that involve the segment component. Up to symmetry, there are five cases to be considered.

Before we start listing and working through the cases, it is worth pointing out that we will present a way of unifying all five cases into a diagram using a symbolic notation in Remark~\ref{rem:r2summary} afterwards. The cases are listed to check the validity of this unified diagram.

\textit{Case 1.} The left (resp. right) end points connect to each other in the complete resolution, and the segment component is not involved. After a change of basis, we obtain the diagram in Figure ~\ref{fig:case1}.
\begin{figure}[hbt!]
    \centering
\begin{tikzcd}[row sep=1ex,column sep=13ex]
            & \LeftSCircRightS \otimes [c_1 x] & \\
            & \LeftSoft^1 \SimpCirc^1 \RightSoft^1 \arrow[rdddd,"-"] & \\
            & \LeftSoft^1\SimpCirc^1\RightSoft^X \arrow[rdddd,"-"] & \\
            & \LeftSoft^X\SimpCirc^1\RightSoft^1 \arrow[rdddd,"-"] & \\
    \LeftBRightS \otimes [x] & \LeftSoft^X\SimpCirc^1\RightSoft^X \arrow[rdddd,"-"] & \reflectbox{$\LeftBRightS$}\otimes [c_1 c_2 x] \\
    \LeftBump^1\RightSoft^1 \arrow[r] & \LeftSoft^1\SimpCirc^X\RightSoft^1+\LeftSoft^X\SimpCirc^1\RightSoft^1+\TwoWaves^1& \LeftSoft^1\RightBump^1 \\
    \LeftBump^1\RightSoft^X \arrow[r]& \LeftSoft^1\SimpCirc^X\RightSoft^X+\LeftSoft^X\SimpCirc^1\RightSoft^X+\TwoWaves^X& \LeftSoft^1\RightBump^X \\
    \LeftBump^X\RightSoft^1 \arrow[r] & \LeftSoft^X\SimpCirc^X\RightSoft^1+\TwoWaves^X & \LeftSoft^X\RightBump^1 \\  
    \LeftBump^X\RightSoft^X \arrow[r] & \LeftSoft^X\SimpCirc^X\RightSoft^X & \LeftSoft^X\RightBump^X \\
    & & \\
    & & \\
            & \WavesClosed \otimes [c_2 x]& \\
            & \TwoWaves^1 + \LeftSoft^1\SimpCirc^1\RightSoft^X + \LeftSoft^X\SimpCirc^1\RightSoft^1 & \\
            & \TwoWaves^X + \LeftSoft^X\SimpCirc^1\RightSoft^X &
\end{tikzcd} 
    \caption{Case 1: After the change of basis, arrows of type 1 become bijections and are ready to be cancelled monotonically.}
    \label{fig:case1}
\end{figure}
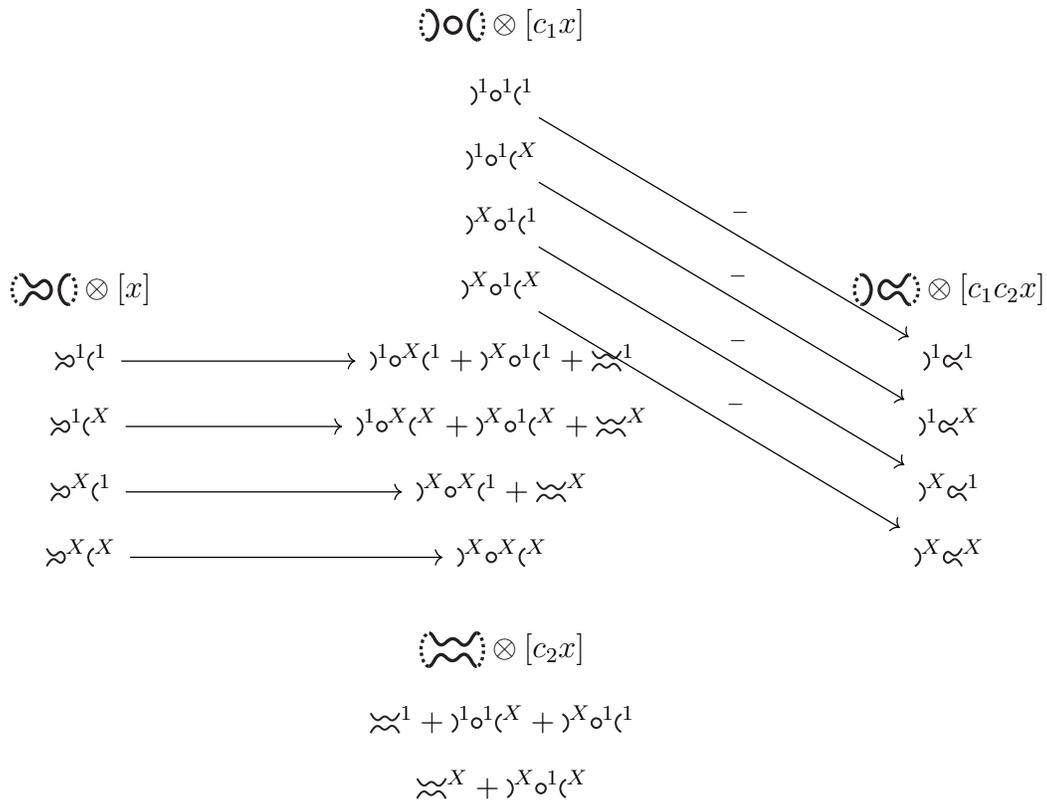
It is important to observe that, with this change of basis, all type 1 arrows are represented in the diagram. For example, the generator $\LeftSoft^1\SimpCirc^X\RightSoft^1+\LeftSoft^X\SimpCirc^1\RightSoft^1+\TwoWaves^1$ maps to zero, as well as others that have no arrows on them. Then, we monotonically cancel the type 1 arrows with the same trick of cancelling from right to left within each sub-cube. More precisely, the arrows at states $D_{101\cdots1}$ and $D_{001\cdots1}$ are cancelled first without introducing any new arrows, since there are no type 2 arrows at these positions. Then, we move on to the states $D_{1001\cdots1},D_{10101\cdots1},\dots,D_{101\cdots10}$; and $D_{0001\cdots1},D_{00101\cdots1},\dots,D_{001\cdots10}$ ...etc. The left over generators in the resulting chain complex are sent to $C(\RIInocros)$ by the chain homotopy equivalence:
\begin{align}
    \TwoWaves^1\otimes [c_2 x] + \LeftSoft^1 \SimpCirc^1 \RightSoft^X \otimes [c_1 x] + \LeftSoft^X \SimpCirc^1 \RightSoft^1 \otimes [c_1 x] &\xmapsto{(-1)^i} \RIInocros^1 \otimes [x],\label{equ:cheR2first}\\
    \TwoWaves^X\otimes [c_2 x] + \LeftSoft^X\SimpCirc^1\RightSoft^X \otimes [c_1 x] &\xmapsto{(-1)^i} \RIInocros^X \otimes [x].
\end{align}

\textit{Case 2.} The top (resp. bottom) end points connect to each other in the complete resolution, and the segment component is not involved. Same argument as in the first case holds with the diagram in Figure ~\ref{fig:case2}.
\begin{figure}[hbt!]
    \centering
\begin{tikzcd}[row sep=.5ex,column sep=12ex]
 & \CaseIILSCRS \otimes [c_1 x]& \\
 & \LeftSoft\,\,\SimpCirc^1\RightSoft^1 \arrow[rdddd,"-"] & \\
 & \LeftSoft\,\,\SimpCirc^1\RightSoft^X \arrow[rdddd,"-"]& \\
 & \LeftSoft\,\,\SimpCirc^X\RightSoft^1+\LeftSoft\,\,\SimpCirc^1\RightSoft^X+\TwoWaves\myfrac{1}{X}+\TwoWaves\myfrac{X}{1} & \\
 \CaseIILBRS \otimes [x] & \LeftSoft\,\,\SimpCirc^X\RightSoft^X+\TwoWaves\myfrac{X}{X} &  \reflectbox{$\CaseIILBRS$} \otimes [c_1 c_2 x] \\
 \LeftBump^1\RightSoft \arrow[ruu] &  & \LeftSoft^1\RightBump \\
 \LeftBump^X\RightSoft \arrow[ruu] & \CaseIITwoWaves \otimes [c_2 x] & \LeftSoft^X\RightBump \\
 & \TwoWaves\myfrac{1}{1}+\LeftSoft\,\,\SimpCirc^1\RightSoft^1  & \\
 & \TwoWaves\myfrac{1}{X}+\LeftSoft\,\,\SimpCirc^1\RightSoft^X  & \\
 & \TwoWaves\myfrac{X}{1}+\LeftSoft\,\,\SimpCirc^1\RightSoft^X  & \\
 & \TwoWaves\myfrac{X}{X} & 
\end{tikzcd}
    \caption{Case 2: Type 1 arrows.}
    \label{fig:case2}
\end{figure}
The left over generators, after cancellations, are sent to $C(\RIInocros)$ by the chain homotopy equivalence:
\begin{align}
    \TwoWaves\myfrac{1}{1}\otimes [c_2 x]+\LeftSoft\,\,\SimpCirc^1\RightSoft^1 \otimes [c_1 x] &\xmapsto{(-1)^i} \RIInocros\myfrac{1}{1} \otimes [x],\\
    \TwoWaves\myfrac{1}{X}\otimes [c_2 x]+\LeftSoft\,\,\SimpCirc^1\RightSoft^X \otimes [c_1 x] &\xmapsto{(-1)^i} \RIInocros\myfrac{1}{X} \otimes [x],\\
    \TwoWaves\myfrac{X}{1}\otimes [c_2 x]+\LeftSoft\,\,\SimpCirc^1\RightSoft^X \otimes [c_1 x] &\xmapsto{(-1)^i} \RIInocros\myfrac{X}{1} \otimes [x],\\
    \TwoWaves\myfrac{X}{X}\otimes [c_2 x] &\xmapsto{(-1)^i} \RIInocros\myfrac{X}{X} \otimes [x].
\end{align}

\textit{Case 3.} Left end points connect to each other and the right end points are part of segment component. Then we use the diagram in Figure ~\ref{fig:case3} for the same argument.
\begin{figure}[hbt!]
    \centering
\begin{tikzcd}[row sep=.5ex,column sep=12ex]
& \CaseIIIio \otimes [c_1 x] & \\
& \LeftSoft^1\SimpCirc^1\RightSoftDot^X \arrow[rdd,"-"] & \\
\CaseIIIoo \otimes [x]& \LeftSoft^X\SimpCirc^1\RightSoftDot^X \arrow[rdd,"-"] & \CaseIIIii\otimes [c_1 c_2 x]\\
\LeftBump^1\RightSoftDot^X \arrow[r] & \LeftSoft^1\SimpCirc^X\RightSoftDot^X+\LeftSoft^X\SimpCirc^1\RightSoftDot^X+\TwoWavesDot^X &  \LeftSoft^1 \RightBumpDot^X\\
\LeftBump^X\RightSoftDot^X \arrow[r] & \LeftSoft^X\SimpCirc^X\RightSoftDot^X & \LeftSoft^X \RightBumpDot^X\\ 
& \hspace{.1cm} & \\
& \CaseIIIoi \otimes [c_2 x]& \\
& \TwoWavesDot^X+\LeftSoft^X\SimpCirc^1\RightSoftDot^X &
\end{tikzcd}
    \caption{Case 3: Type 1 arrows.}
    \label{fig:case3}
\end{figure}
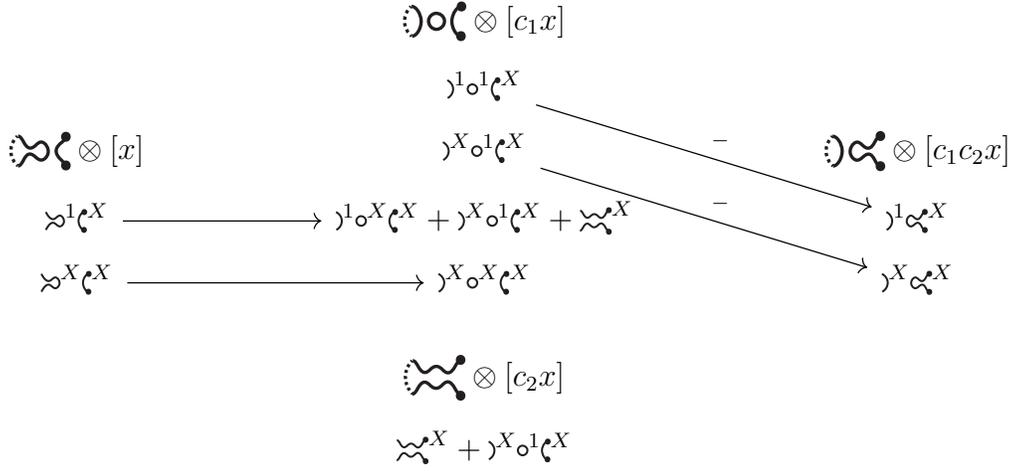
Similarly, the equivalence to $C(\RIInocros)$ is given by
\begin{equation}
    \TwoWavesDot^X\otimes [c_2x] +\LeftSoft^X\SimpCirc^1\RightSoftDot^X \otimes [c_1x] \xmapsto{(-1)^i}\RIInocrosDot^X\otimes [x].
\end{equation}

\textit{Case 4.} Top end points connect to each other and the bottom end points are part of segment component. We use the diagram in Figure ~\ref{fig:case4} for the usual argument.

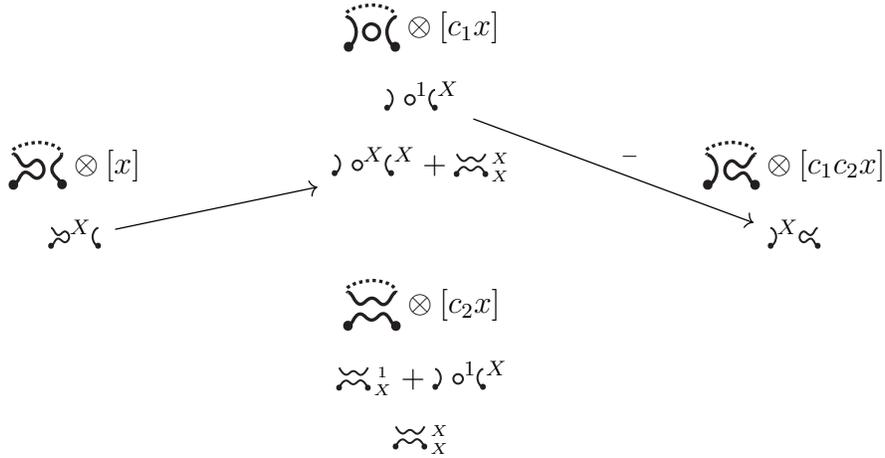
\begin{figure}[hbt!]
    \centering
\begin{tikzcd}[row sep=.5ex,column sep=12ex]
& \CaseIVio \otimes [c_1 x] & \\
& \LeftSoftSubDot\,\,\SimpCirc^1\reflectbox{$\LeftSoftSubDot$}^X \arrow[rdd,"-"]& \\
\CaseIVoo \otimes [x]& \LeftSoftSubDot\,\,\SimpCirc^X\reflectbox{$\LeftSoftSubDot$}^X+\TwoWavesSubDot\myfrac{X}{X} & \CaseIVii \otimes [c_1c_2x] \\
\LeftBumpSubDot^X\reflectbox{$\LeftSoftSubDot$}\arrow[ur]& &\LeftSoftSubDot^X\reflectbox{$\LeftBumpSubDot$}\\ 
& \CaseIVoi \otimes [c_2x] &\\
& \TwoWavesSubDot\myfrac{1}{X} + \LeftSoftSubDot\,\,\SimpCirc^1\reflectbox{$\LeftSoftSubDot$}^X \\
& \TwoWavesSubDot\myfrac{X}{X} &
\end{tikzcd}
    \caption{Case 4: Type 1 arrows.}
    \label{fig:case4}
\end{figure}
Then the equivalence map is given by
\begin{align}
    \TwoWavesSubDot\myfrac{1}{X} \otimes [c_2 x]+ \LeftSoftSubDot\,\,\SimpCirc^1\reflectbox{$\LeftSoftSubDot$}^X  \otimes [c_1 x] &\xmapsto{(-1)^i} \RIInocrosSubDot\myfrac{1}{X} \otimes [x],\\
    \TwoWavesSubDot\myfrac{X}{X} \otimes [c_2 x] &\xmapsto{(-1)^i} \RIInocrosSubDot\myfrac{X}{X} \otimes [x].
\end{align}

\textit{Case 5.} Two diagonal end points connect to each other and other two end points are part of segment component. We use the diagram in Figure~\ref{fig:case5}, where the equivalence map is given by
\begin{equation}\label{equ:case5}
    \TwoWavesDiagDot^X \otimes [c_2 x] \xmapsto{(-1)^i} \RIInocrosDiagDot^X \otimes [x] 
\end{equation}
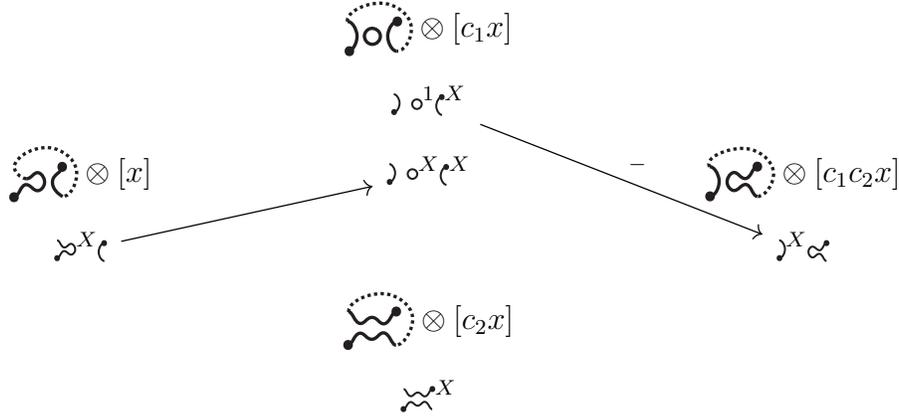
\begin{figure}[hbt!]
    \centering
\begin{tikzcd}[row sep=.5ex,column sep=12ex]
& \CaseVio \otimes [c_1 x] & \\
& \LeftSoftSubDot\,\,\SimpCirc^1\raisebox{.2cm}{\scalebox{-1}[-1]{$\LeftSoftSubDot$}}^X \arrow[rdd,"-"]& \\
\CaseVoo \otimes [x]& \LeftSoftSubDot\,\,\SimpCirc^X\raisebox{.2cm}{\scalebox{-1}[-1]{$\LeftSoftSubDot$}}^X & \CaseVii \otimes [c_1c_2x] \\
\LeftBumpSubDot^X\raisebox{.2cm}{\scalebox{-1}[-1]{$\LeftSoftSubDot$}}\arrow[ur]& &\LeftSoftSubDot^X\raisebox{.2cm}{\scalebox{-1}{$\LeftBumpSubDot$}}\\ 
& \CaseVoi \otimes [c_2x] &\\
& \TwoWavesDiagDot^X &
\end{tikzcd}
    \caption{Case 5: Type 1 arrows.}
    \label{fig:case5}
\end{figure}

Lastly, we show that chain equivalence from $C(\RIIcros )$ to $C(\RIInocros)$ preserves the $(q,u)$-gradings. The change of bases and chain homotopy equivalences involved in the zigzag lemma preserve the $(q,u)$-gradings as explained in Reidemeister move I case. So, we only need to check that the chain equivalences defined at the end of each case, namely the maps~\eqref{equ:cheR2first} through~\eqref{equ:case5}, preserve the gradings. It is easy to see that the generators on both left and right sides of the equivalence maps in all cases above have the same $i(v)=|\!|v|\!|-n_-$, and $q(v)=\deg(v)+i(v)+n_+-n_-+1$ values. To compare the values of $\mu(v)=k_s\cdot \alpha-K\cdot \alpha$ for the terms on either side of equivalence maps, we use the canonical shortcut so that $K\cdot \alpha=0$. Since choice of orientation on closed components have no effect on $k_s\cdot \alpha$, one can assign orientations on closed states without changing $u$-gradings. \begin{convention}\label{conv:orionclosedcomps}
Consider two states that differ only by the local pictures $\LeftSoft\,\SimpCirc\,\RightSoft$ and $\RIInocros$ (or $\TwoWaves$). We may assume that whenever a closed component of one of these states has a common arc with a (closed or segment) component of the other, then the closed component is oriented to have the same orientation with the other component(s) on the common arc(s). This way, the contribution of all algebraic intersection numbers coming from outside the local picture are the same, and we only need to count the intersection numbers inside.
\end{convention}

 Inside the local pictures, the diagram $\RIInocros$ has no crossings with the shortcut, the diagram $\TwoWaves$ has 2 or 4 intersections which cancel each other; see Figure ~\ref{fig:ugradingforR2}.
\begin{figure}[hbt!]
    \centering
    \includegraphics[width=.7\textwidth]{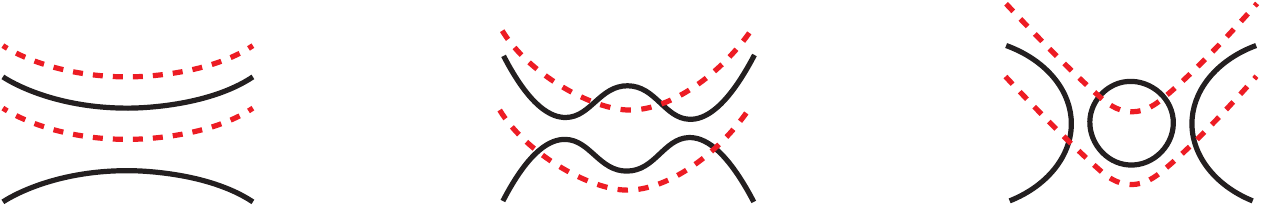}
    \caption{Possibilities for the intersections of the canonical shortcut (red dashed line) within the local picture. Horizontal reflections of the dashed lines are omitted for clarity.} 
    \label{fig:ugradingforR2}
\end{figure}
For the diagram $\LeftSoft\,\SimpCirc\,\RightSoft$, if the shortcut goes through the small circle in the middle then the two intersections cancel each other trivially. If he shortcut goes through the left and right arcs of $\LeftSoft\,\SimpCirc\,\RightSoft$, then the two intersections again cancel each other by Convention ~\ref{conv:orionclosedcomps} in all cases from 1 through 4, since the left and right arcs have opposite orientations, that is, one goes upwards, the other downwards. In case 5, the left and right arcs of $\LeftSoft\,\SimpCirc\,\RightSoft$ have the same orientation and thus the intersection number inside the local picture is not zero, yet this is not a problem because there is no $\LeftSoft\,\SimpCirc\,\RightSoft$ term on the left side of the equivalence map; see the formula ~\eqref{equ:case5}. Therefore, the $u$-grading is also preserved under our chain equivalences in all cases.

\begin{rem}\label{rem:r2summary}
The first two cases involve only the the maps $m_1$ and $\Delta_1$, hence provide a proof of invariance under Reidemeister move II for the (regular) Khovanov homology of links. In Section 3.3 of ~\cite{Jacobsson}, these cases are summarized by a symbolic notation. Even though the maps $m_2$, $\Delta_2$ and $\nabla$ are involved in the last three cases, we can still use Jacobsson's symbolic notation to abridge all five cases to a single picture as in Figure ~\ref{fig:generalcase}.
\begin{figure}[hbt!]
    \centering
\begin{tikzcd}[row sep=.5ex,column sep=12ex]
& \raisebox{-0.065cm}{\scalebox{1.8}{$\LeftSoft\,\SimpCirc\,\RightSoft$}} \otimes [c_1x]& \\
& \LeftSoft\,\,\SimpCirc^1 \RightSoft\arrow[rdd,"-"] & \\
\raisebox{-0.065cm}{\scalebox{1.8}{$\LeftBump\,\RightSoft$}} \otimes[x] & d\,(\LeftBump\,\RightSoft) & \raisebox{-0.065cm}{\scalebox{1.8}{$\LeftSoft\,\RightBump$}} \otimes [c_1c_2x]\\
\LeftBump\,\RightSoft\arrow[ru] & & \LeftSoft\,\RightBump \\
& \raisebox{-0.065cm}{\scalebox{1.8}{$\TwoWaves$}} \otimes[c_2x] & \\
& \TwoWaves\myfrac{p}{q}+\LeftSoft^{p:q}\SimpCirc^1\RightSoft^{q:p} & \\
\end{tikzcd}
    \caption{The common diagram for all five cases.}
    \label{fig:generalcase}
\end{figure}
Here, it is implied that (1) all diagrams may (or may not) connect to the head/leg of the knotoid in any possible way as in the cases above, (2) missing labels can be filled by any valid labeling within the state, and (3) the map $\raisebox{-.05cm}{\rotatebox{90}{$\LeftSoft\,\RightSoft$}} \myfrac{p}{q}\longrightarrow \raisebox{-.05cm}{$\LeftSoft^{p:q}\,\RightSoft^{q:p}$}$ stands for all possible local changes in labels under all elementary maps $m_{1,2}$, $\Delta_{1,2}$, $\nabla$. It is straight forward to check that the diagram in Figure ~\ref{fig:generalcase} with the symbolic notation specializes to any of the five cases by substituting labels for $p$, $q$, and using the definitions of the elementary maps as shown in Figure ~\ref{fig:localcoborisms and induced maps}. Then the chain equivalence maps are as follows:\[\TwoWaves\myfrac{p}{q}\otimes [c_2x]+\LeftSoft^{p:q}\SimpCirc^1\RightSoft^{q:p} \otimes [c_1x] \xmapsto{(-1)^i} \RIInocros \myfrac{p}{q} \otimes [x].\]
\end{rem}

\begin{rem}
One could also use the change of basis $\TwoWaves\myfrac{p}{q}$, instead of $\TwoWaves\myfrac{p}{q}+\LeftSoft^{p:q}\SimpCirc^1\RightSoft^{q:p}$, before the application of the cancellation lemma. That way, there would be extra type 1 arrows from $\TwoWaves$ to $\LeftSoft\,\RightBump$, which are still cancelled after the monotonic cancellation of arrows from $\LeftSoft\,\,\SimpCirc^1\RightSoft$ to $\LeftSoft\,\RightBump$. Then the chain equivalence map would reduce to \[\TwoWaves\myfrac{p}{q}\otimes [c_2x] \xmapsto{(-1)^i} \RIInocros \myfrac{p}{q} \otimes [x].\] The advantage of using the basis $\TwoWaves\myfrac{p}{q}+\LeftSoft^{p:q}\SimpCirc^1\RightSoft^{q:p}$ is that the resulting chain complex after cancellations is a subcomplex of $C(\RIIcros)$. This is explained in Section 5.3 of ~\cite{Khovanov} for the cases 1 and 2, by using a chain map induced from a (1+1)-dimensional cobordism $\TwoWaves \rightarrow \LeftSoft\,\SimpCirc\,\RightSoft$. This explanation extends to case 3-5 if one starts with a category $\mathcal{M}$ whose objects consist of a single segment component and some number of closed components. The morphisms are given by the cobordisms in Figure ~\ref{fig:localcoborisms and induced maps}, and the composition of morphisms is by stacking surfaces. The tensor product is given by joining the segment components in $S^2$ as in knotoid multiplication. Then there is a similar monoidal functor $F:\mathcal{M}\rightarrow \text{Vect}_\Q$, so that one can define a map on vector spaces induced from a cobordism $\TwoWaves \rightarrow \LeftSoft\,\SimpCirc\,\RightSoft$ in Mor($\mathcal{M}$).
\end{rem}

\subsection{Invariance under the Reidemeister move III} There are six end points on a (local) Reidemeister move III diagram. A complete resolution can connect these end points in $\binom{6}{2}\cdot C_2+C_3=35$ different ways outside, where $C_2=2$ and $C_3=5$ are the 2nd and 3rd Catalan numbers. The first summand gives the number of resolutions such that the segment component intersects the local disk, and the second gives the number of those where the segment does not intersect the local disk. Writing individual chain homotopy equivalences on generators explicitly, as before, would become unmanageable for this many cases. Instead, we will first reduce to the Reidemeister move II case, and then use Jacobsson's symbolic notation to address all possible cases at once in the manner described in Remark~\ref{rem:r2summary}. That way, it will be sufficient to apply the monotonic reduction on only two diagrams - one for each side of the Reidemeister move III.

\begin{figure}[hbt!]
    \centering
    \begin{subfigure}[b]{0.08\textwidth}
    \centering
    \includegraphics[width=\textwidth]{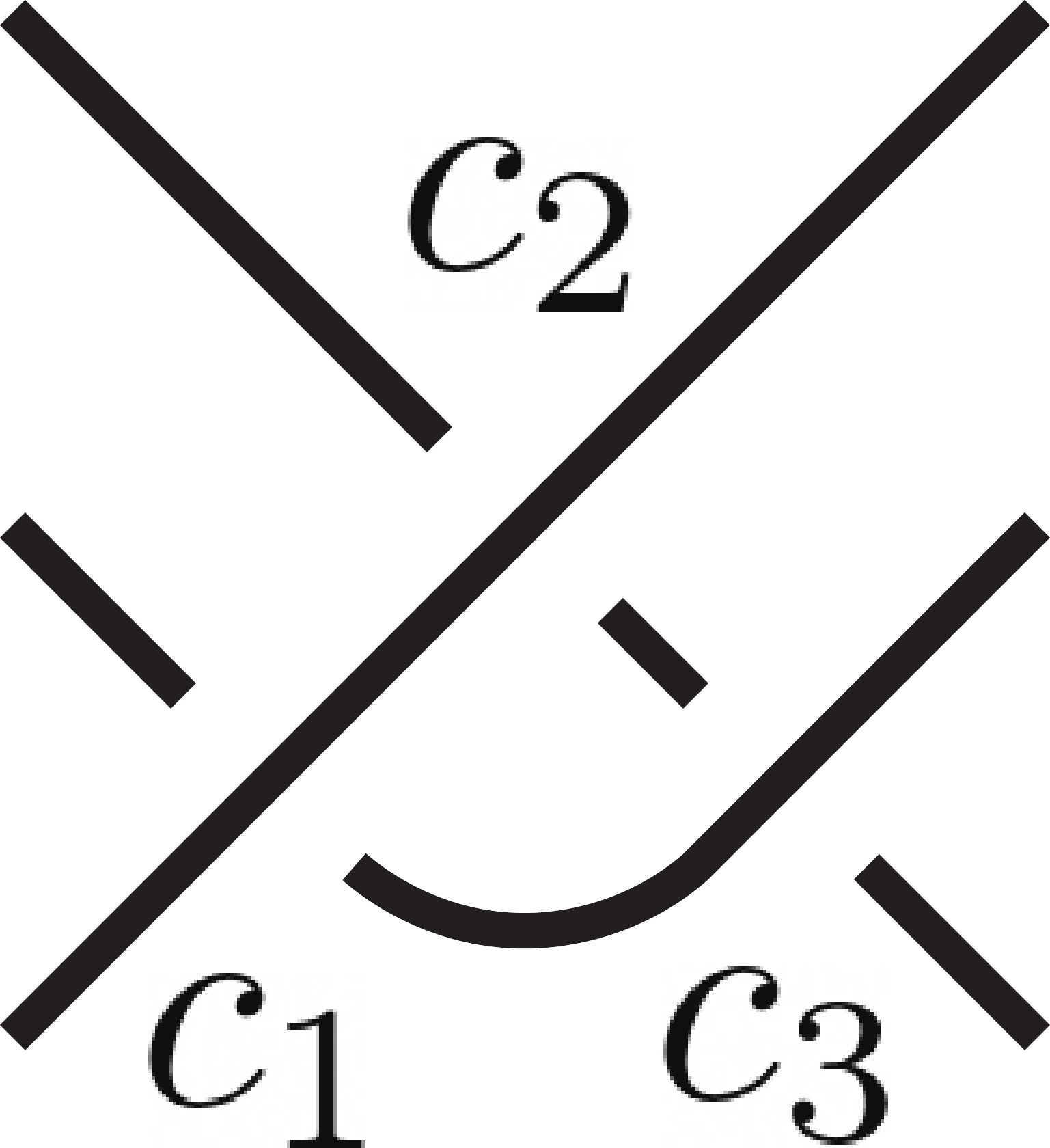}
    \caption{$D$}
    \label{fig:r3d}
    \end{subfigure}
    \hfill
    \begin{subfigure}[b]{0.08\textwidth}
    \centering
    \includegraphics[width=\textwidth]{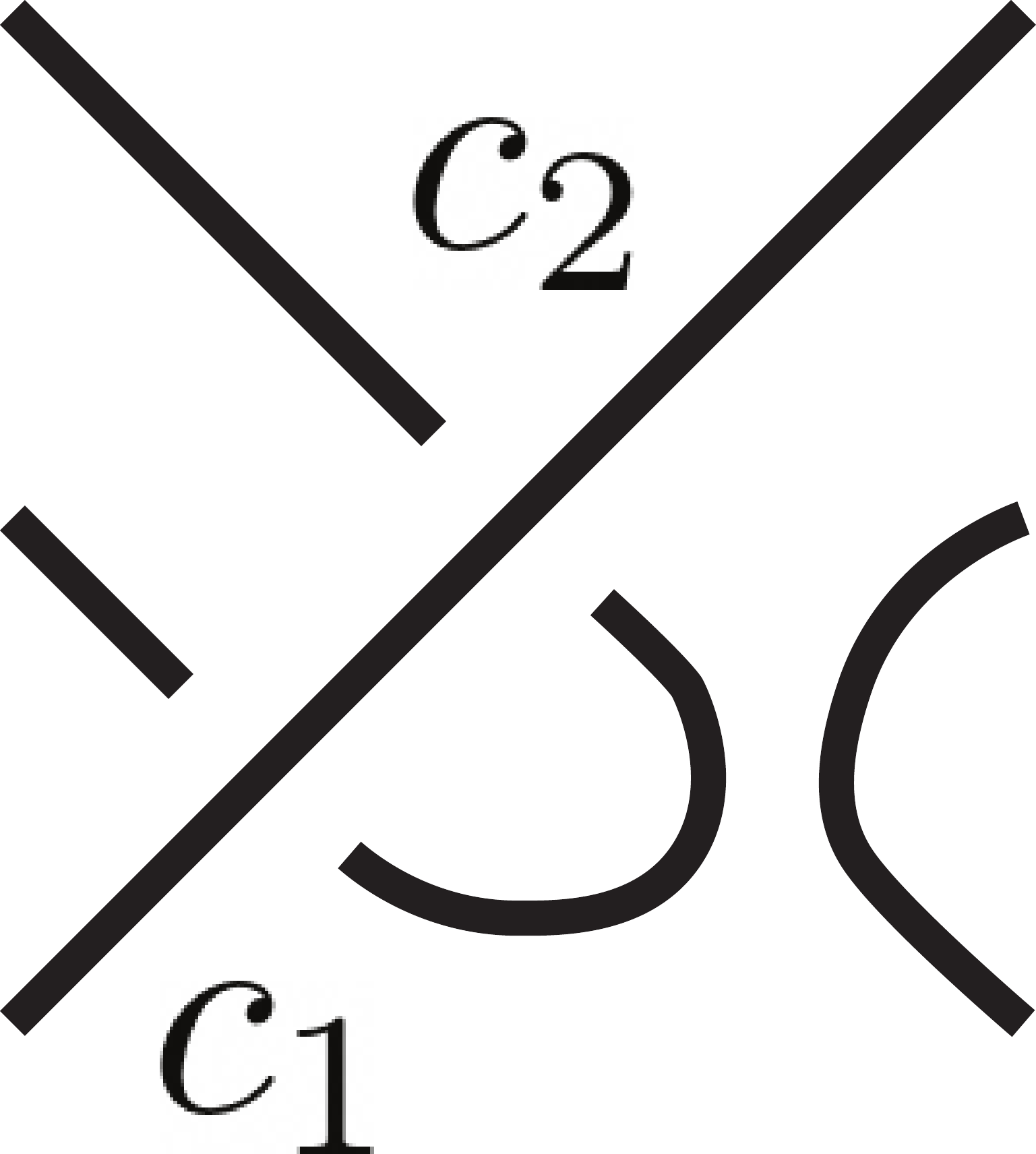}
    \caption{$D_1$}
    \label{fig:r3d1}
    \end{subfigure}
    \hfill
    \begin{subfigure}[b]{0.08\textwidth}
    \centering
    \includegraphics[width=\textwidth]{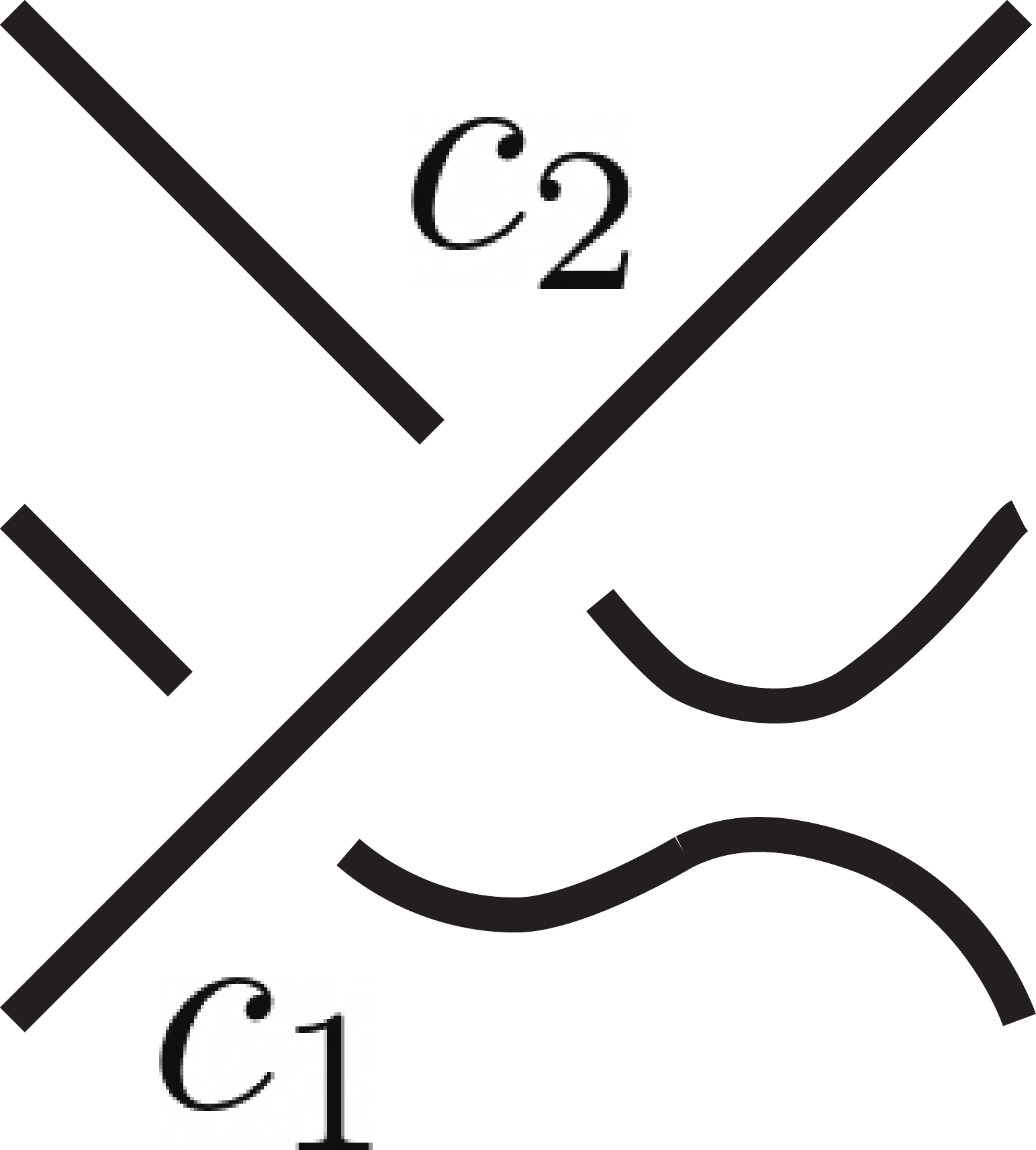}
    \caption{$D_2$}
    \label{fig:r3d2}
    \end{subfigure}
    \hfill
    \begin{subfigure}[b]{0.08\textwidth}
    \centering
    \includegraphics[width=\textwidth]{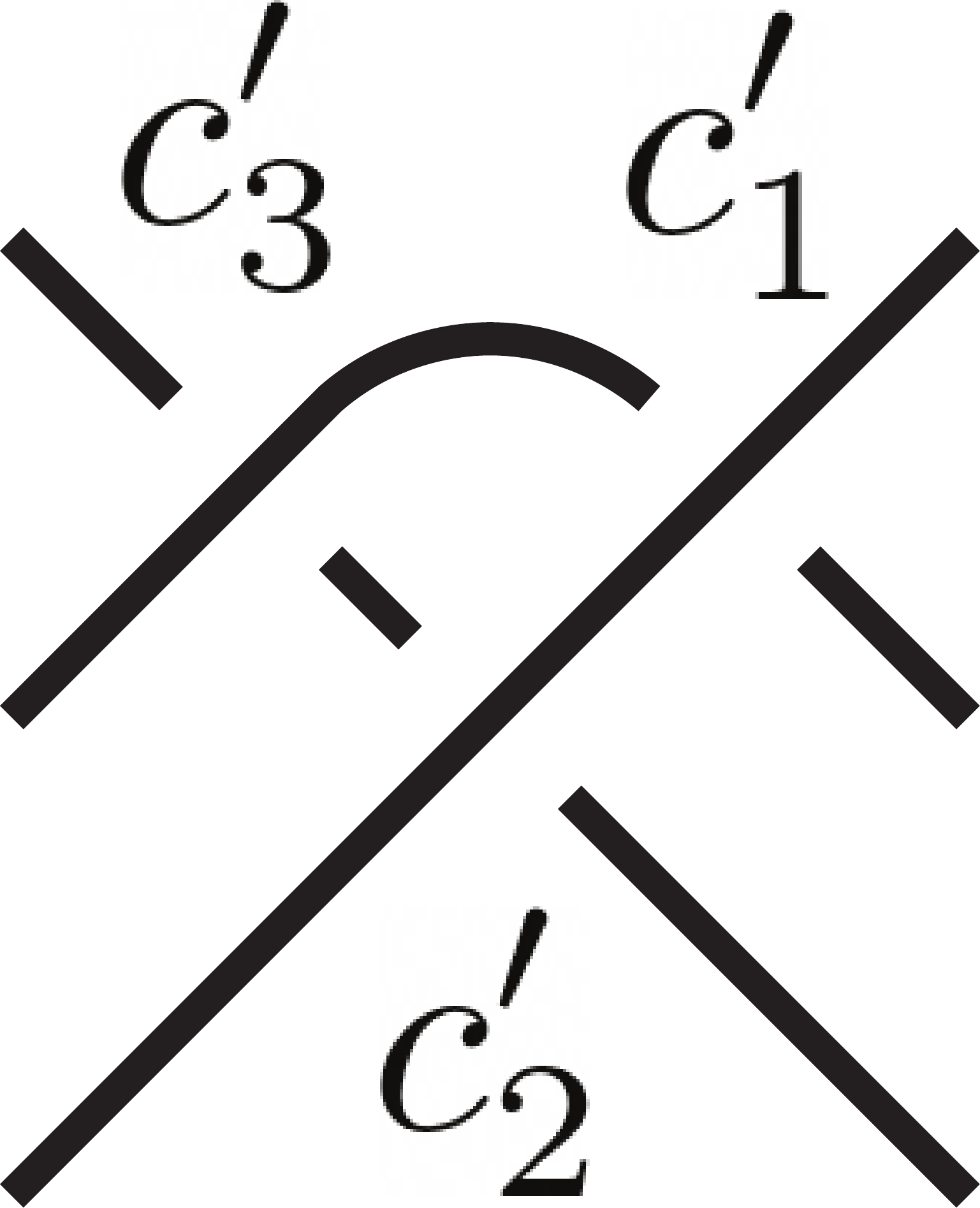}
    \caption{$D'$}
    \label{fig:r3dprime}
    \end{subfigure}
    \hfill
    \begin{subfigure}[b]{0.08\textwidth}
    \centering
    \includegraphics[width=\textwidth]{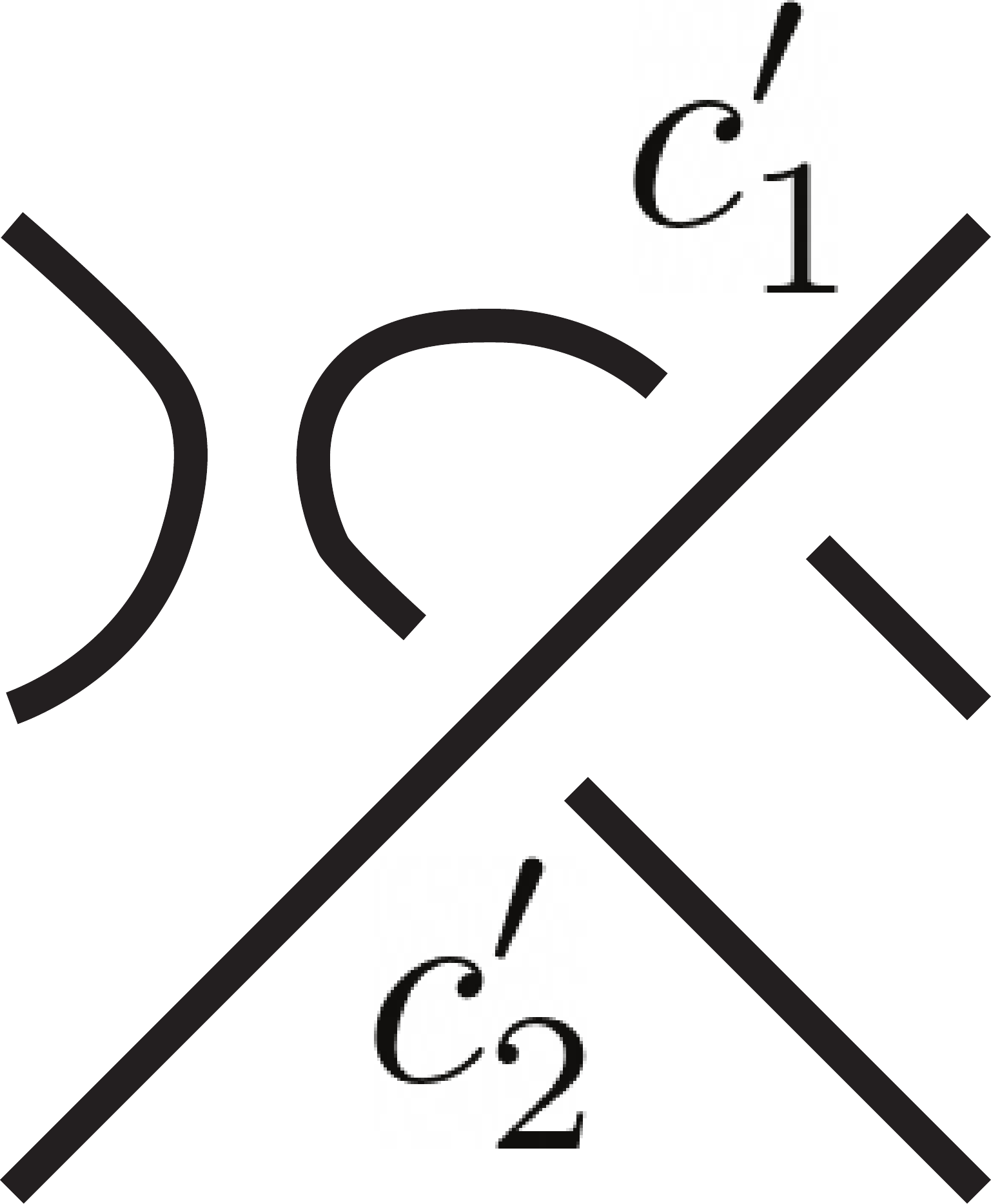}
    \caption{$D'_1$}
    \label{fig:r3dprime1}
    \end{subfigure}
    \hfill
    \begin{subfigure}[b]{0.08\textwidth}
    \centering
    \includegraphics[width=\textwidth]{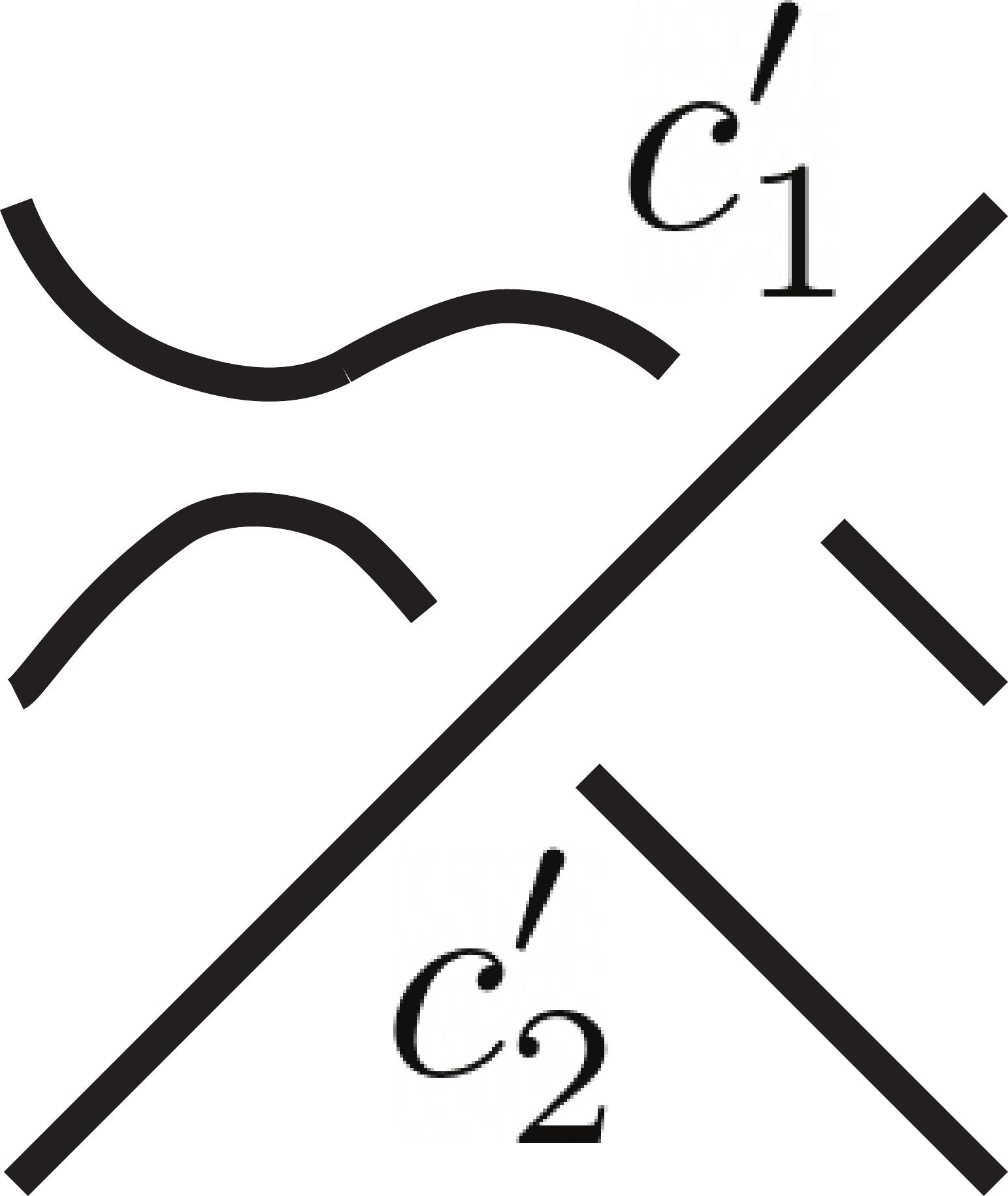}
    \caption{$D'_2$}
    \label{fig:r3dprime2}
    \end{subfigure}
    \caption{Resolution of the crossings $c_3$ and $c'_3$.}
    \label{fig:r3figures}
\end{figure}
We would like to show that $C(D)\overset{c.h.e.}{\sim}C(D')$; see Figure ~\ref{fig:r3figures}, where it is assumed that $c_1<c_2<c_3<c_4<c_5<\cdots$ and $c'_1<c'_2<c'_3<c_4<c_5<\cdots$. When $c_3$ is a positive (resp. negative) crossing, we have
\begin{equation}
    C(D)\cong C(D_1) \oplus C(D_2)[\![1]\!] \text{ (resp. } C(D)\cong C(D_1)[\![-1]\!] \oplus C(D_2)),
\end{equation}
and similarly
\begin{equation}
    C(D')\cong C(D'_1) \oplus C(D'_2)[\![1]\!] \text{ (resp. } C(D')\cong C(D'_1)[\![-1]\!] \oplus C(D'_2)),
\end{equation}
as triply graded vector spaces (not as chain complexes). We will consider only the positive crossing case -- the argument is identical for the negative crossing. Since a Reidemeister II move can be applied to diagram $D_1$, one can use the symbolic notation of Remark ~\ref{rem:r2summary} to list all generators of the subspace $C(D_1)$ in $C(D)$ together with the type 1 arrows in $C(D)$, without mentioning which end points connect to the head/leg of the knotoid or to each other; see Figure ~\ref{fig:r3forD}.
\begin{figure}
    \centering
\begin{tikzcd}[row sep=.5ex,column sep=4ex]
\raisebox{-.4cm}{\includegraphics[scale=0.07]{R3D.pdf}} & \scalebox{1.4}{$\Dioo$} \otimes [c_1x]& \scalebox{1.4}{$\Diio$} \otimes [c_1c_2x] \\
& \DiooWithLetters{p}{q}{r}+\DoioWithLetters{p:q}{q:p}{1}{\widetilde{r}} \arrow[rdd,"-", near start] \arrow[rdddd,"-"]& d\left(\,\DoioWithLetters{}{}{1}{}\right) \\
\scalebox{1.4}{$\Dooo$} \otimes [x] & \scalebox{1.4}{$\Doio$} \otimes [c_2x] & \scalebox{1.4}{$\Dioi$} \otimes [c_1c_3x] & \scalebox{1.4}{$\Diii$} \otimes [c_1c_2c_3x] \\
\Dooo \arrow[r, red, dashed] & d\left(\,\Dooo\,\right)\,\,,\,\,\,\,\DoioWithLetters{}{}{1}{}\,\,\,\,\,\,\,\,\, \arrow[ruu, crossing over, red, dashed] & \Dioi \arrow[r,"-"] & 
\Diii \\
& \scalebox{1.4}{$\Dooi$} \otimes [c_3x] & \scalebox{1.4}{$\Doii$} \otimes [c_2c_3x] & \\
& \Dooi \arrow[r] \arrow[ruu] & \Doii \arrow[ruu] \\
\end{tikzcd}   
    \caption{The chain complex $\widetilde{C}(D)$ after the cancellation of the dashed arrows.}
    \label{fig:r3forD}
\end{figure}
After this change of basis, there are no other type 1 arrows in $C(D)$ at the generators $\Dooo$ and \DoioWithLetters{}{}{\scriptsize{1}}, then the same trick of monotonically cancelling these arrows from right to left is used within sub-cubes of $\Dooo$ and \DoioWithLetters{}{}{\scriptsize{1}}, so that forming zigzags with type 2 arrows is avoided. The resulting chain complex, denoted as $\widetilde{C}(D)$, is chain equivalent to $C(D)$ and isomorphic to \[\,\left\langle\,\, \DiooWithLetters{p}{q}{r}+\,\DoioWithLetters{p:q}{q:p}{1}{\widetilde{r}}\,\,\right\rangle \oplus C(D_2)[\![1]\!]\vspace{.2cm}\]
as a triply graded vector space. Similarly, we have the diagram in Figure ~\ref{fig:r3forDp} for $C(D')$.
\begin{figure}
    \centering
\begin{tikzcd}[row sep=.5ex,column sep=4ex]
\raisebox{-.4cm}{\includegraphics[scale=0.07]{R3Dp.pdf}}& \scalebox{1.4}{$\Dpioo$} \otimes [c'_1x]& \scalebox{1.4}{$\Dpiio$} \otimes [c'_1c'_2x] \\
& \DpiooWithLetters{p}{q}{r}+\DpoioWithLetters{q:r}{r:q}{1}{\widetilde{p}} \arrow[rdd,"-", near start] \arrow[rdddd,"-"] & d\left(\,\DpoioWithLetters{}{}{1}{}\right) \\
\scalebox{1.4}{$\Dpooo$} \otimes [x] & \scalebox{1.4}{$\Dpoio$} \otimes [c'_2x] & \scalebox{1.4}{$\Dpioi$} \otimes [c'_1c'_3x] & \scalebox{1.4}{$\Dpiii$} \otimes [c'_1c'_2c'_3x] \\
\Dpooo \arrow[r,red, dashed] & d\left(\,\Dpooo\,\right)\,\,,\,\,\,\,\DpoioWithLetters{}{}{1}{}\,\,\,\,\,\,\,\,\, \arrow[ruu,crossing over, red, dashed] & \Dpioi \arrow[r,"-"] & 
\Dpiii \\
& \scalebox{1.4}{$\Dpooi$} \otimes [c'_3x] & \scalebox{1.4}{$\Dpoii$} \otimes [c'_2c'_3x] & \\
& \Dpooi \arrow[r] \arrow[ruu] & \Dpoii \arrow[ruu] \\
\end{tikzcd}
    \caption{The chain complex $\widetilde{C}(D')$ after the cancellation of the dashed arrows.}
    \label{fig:r3forDp}
\end{figure}
Cancelling the arrows the same way, we obtain a chain complex, denoted as $\widetilde{C}(D')$, that is chain equivalent to $C(D')$ and isomorphic to \[\left\langle\,\, \DpiooWithLetters{}{r}{s}+\DpoioWithLetters{r:s}{s:r}{1}{}\,\,\,\,\,\,\,\right\rangle \oplus C(D'_2)[\![1]\!]\vspace{.2cm}\] as a triply graded vector space. 

Now, it is clear that $C(D_2)$ and $C(D'_2)$ are isomorphic as chain complexes -- the only difference between the diagrams $D_2$ and $D'_2$ is that the first two crossings are switched in the ordering. Then the chain equivalence from $\widetilde{C}(D)$ to $\widetilde{C}(D')$ is defined by (1) the invertible map $\Phi_{(12)}$ for the adjacent transposition of 1 and 2 (see Section ~\ref{sec:indpfromorder}) on the generators of $C(D_1)$, and (2) the map that takes the single remaining generator of $\widetilde{C}(D)$ to that of $\widetilde{C}(D')$. More explicitly,
\begin{align}
    \Dooi\otimes [c_3x] &\xmapsto{\hspace{0.3in}} \Dpooi \otimes [c'_3x] \label{equ:cheR3map1} \\
    \Dioi\otimes [c_1c_3x] &\xmapsto{\hspace{0.3in}} \Dpoii \otimes [c'_2c'_3x] \\
    \Doii\otimes [c_2c_3x] &\xmapsto{\hspace{0.3in}} \Dpioi \otimes [c'_1c'_3x] \\
    \Diii\otimes [c_1c_2c_3x] &\xmapsto{\hspace{0.075in}-1\hspace{0.075in}} \Dpiii \otimes [c'_1c'_2c'_3x] \label{equ:cheR3map4} \\
    \DiooWithLetters{p}{q}{r}\otimes [c_1x]+\DoioWithLetters{p:q}{q:p}{1}{\widetilde{r}}\,\,\otimes [c_2x] & \xmapsto{\hspace{0.3in}} \,\, \DpiooWithLetters{p}{q}{r}\otimes [c'_1x]+\,\,\DpoioWithLetters{q:r}{r:q}{1}{\widetilde{p}}\,\,\,\,\,\,\,\otimes [c'_2x] \label{equ:cheR3map5}
\end{align}\\
where $\widetilde{r}$ is one of $r$, $p:q$, or $q:p$ depending on whether the arcs on the local picture are connected outside, and $\widetilde{p}$ is one of $p$, $q:r$, or $r:q$ similarly. Missing labels on the first four maps~\eqref{equ:cheR3map1} through ~\eqref{equ:cheR3map4} mean that any labeling is allowed as long as the labels for the same components on both sides are the same. Note that this piecewise definition is a consistent chain map as the following diagram commutes:
\begin{center}
    \begin{tikzcd}
        \DiooWithLetters{p}{q}{r} \otimes [c_1x] + \DoioWithLetters{p:q}{q:p}{1}{\widetilde{r}}\,\otimes [c_2x] \arrow[r] \arrow{d}{-}[swap]{d}
        & \DpiooWithLetters{p}{q}{r} \otimes [c'_1x] +\, \DpoioWithLetters{q:r}{r:q}{1}{\widetilde{p}}\,\,\,\,\,\,\, \otimes [c'_2x] \arrow{d}{-}[swap]{d'} \\
        \DioiWithLetters{\widetilde{p}}{q:r}{r:q} \,\,\,\,\,\,\, \otimes [c_1c_3x] + \DoiiWithLetters{p:q}{q:p}{\widetilde{r}}\, \otimes [c_2c_3x] \arrow[r]
        & \DpioiWithLetters{p:q}{q:p}{\widetilde{r}} \otimes [c'_1c'_3x] +\,\, \DpoiiWithLetters{\widetilde{p}}{q:r}{r:q} \,\,\,\,\,\,\, \otimes [c'_2c'_3x]\\
    \end{tikzcd}\vspace{-.2cm}
\end{center}

We finish by showing that the chain equivalence above preserve the $(q,u)$-grading. For the $q$-grading, this follows from the observations (1) that the terms on both sides of the equivalence maps have the same $q(v)=\deg(v)+i(v)+n_+-n_-$ value, and (2) that the chain equivalences coming from the application of zigzag lemma preserve the $q$-grading. For the $u$-grading, it is also true that chain equivalences of the zigzag lemma preserve the $u$-grading as explained in Reidemeister move I case, so one only needs to check if the terms on either sides of the equivalence maps above have the same $u$-gradings. Using the canonical shortcut $\alpha$, this is immediate for the first four maps~\eqref{equ:cheR3map1} through ~\eqref{equ:cheR3map4} since (1) the states on either sides are isotopic in $S^2$ which makes the value of $k_s\cdot \alpha$ the same for them, and (2) $K\cdot \alpha$ is the same for diagrams $D$ and $D'$; see Figure ~\ref{fig:r3figures}. For the map ~\eqref{equ:cheR3map5}, there are three different positions, up to symmetry, that the canonical shortcut can pass through the local disk. Using the notation \lq\lq100" (resp. \lq\lq010") for resolutions with $s(c_1)=1=s(c'_1)$, $s(c_{2,3})=0=s(c'_{2,3})$ (resp. $s(c_2)=1=s(c'_2)$, $s(c_{1,3})=0=s(c'_{1,3})$), the three shortcut positions are listed together with their intersections with the four resolutions in Figure ~\ref{fig:u4r31stEx}.
\begin{figure}
    \centering
    \begin{tikzcd}
    & \includegraphics[scale=0.05]{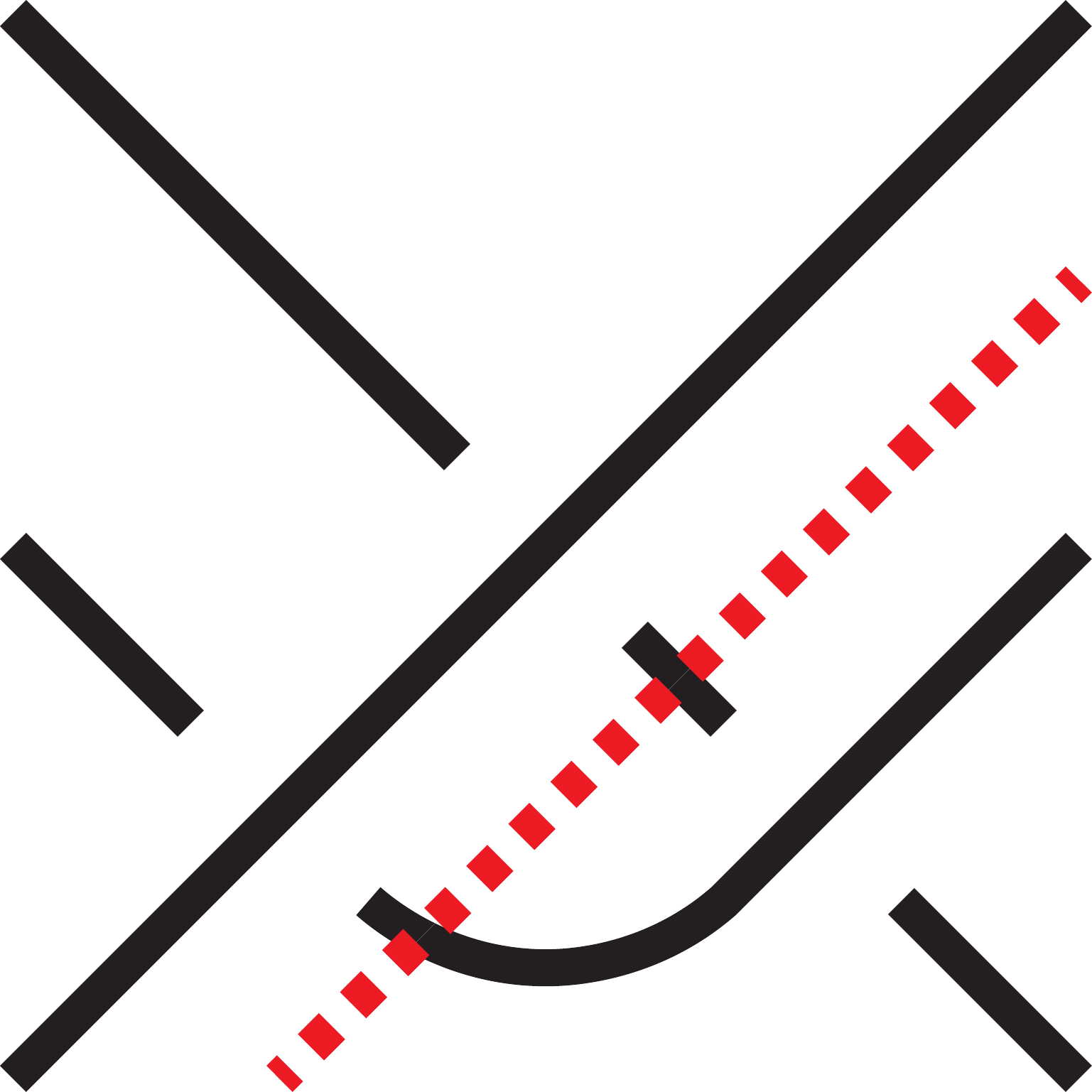} \arrow{ld}[swap]{100} \arrow[rd,"010"]& & & \includegraphics[scale=0.05]{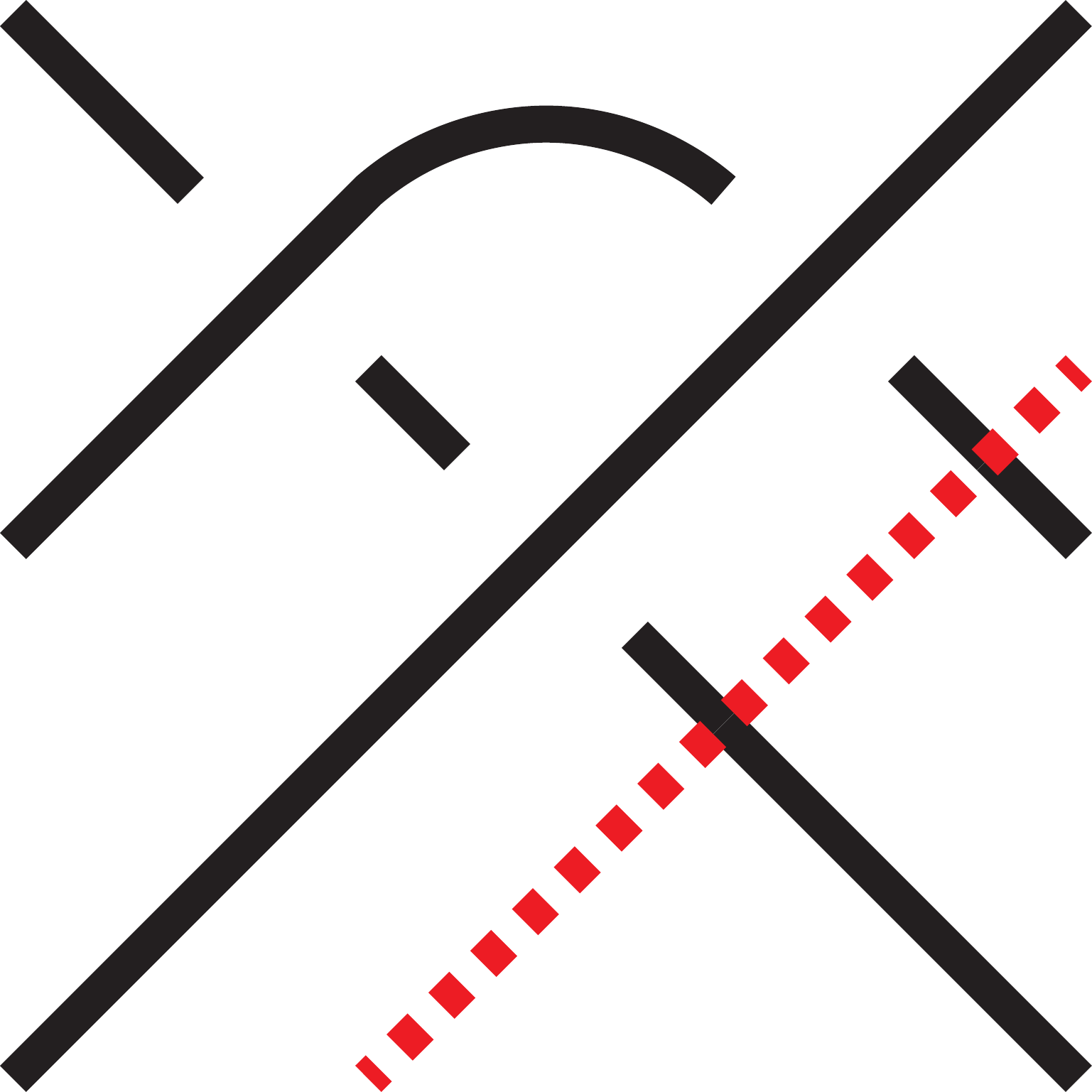} \arrow{ld}[swap]{100} \arrow[rd,"010"]& \\ 
    \includegraphics[scale=0.05]{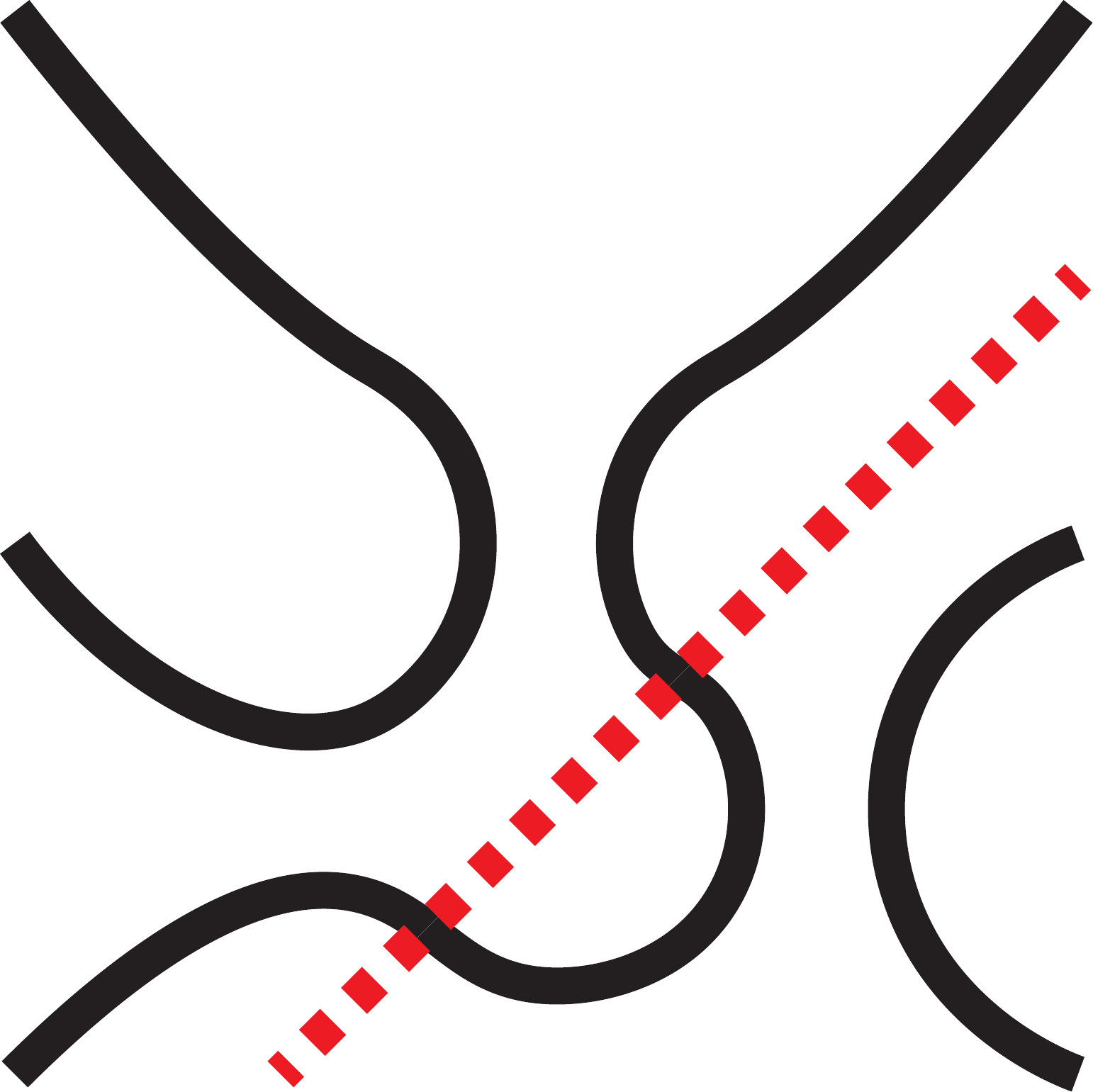} & & \includegraphics[scale=0.05]{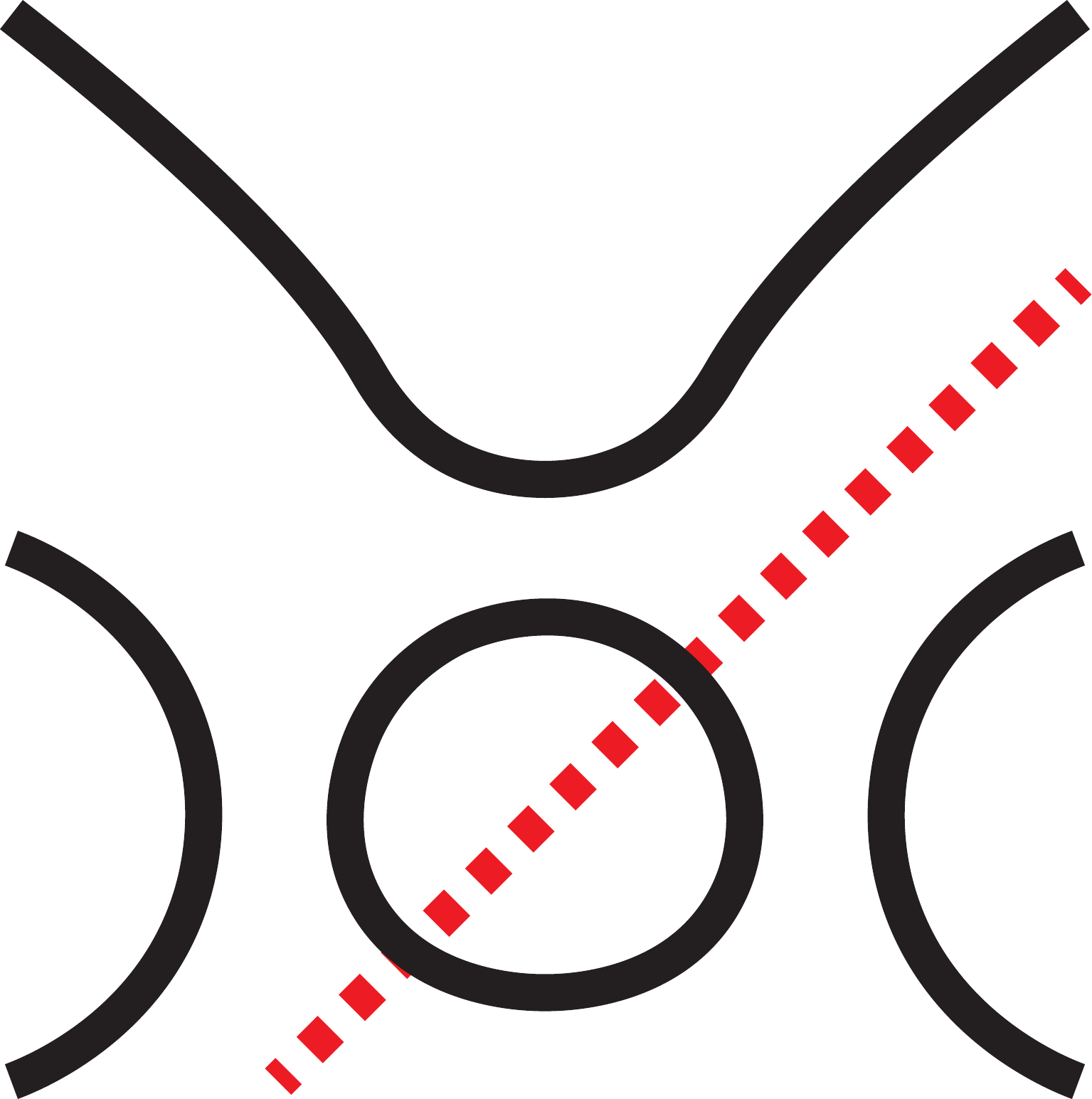} & \includegraphics[scale=0.05]{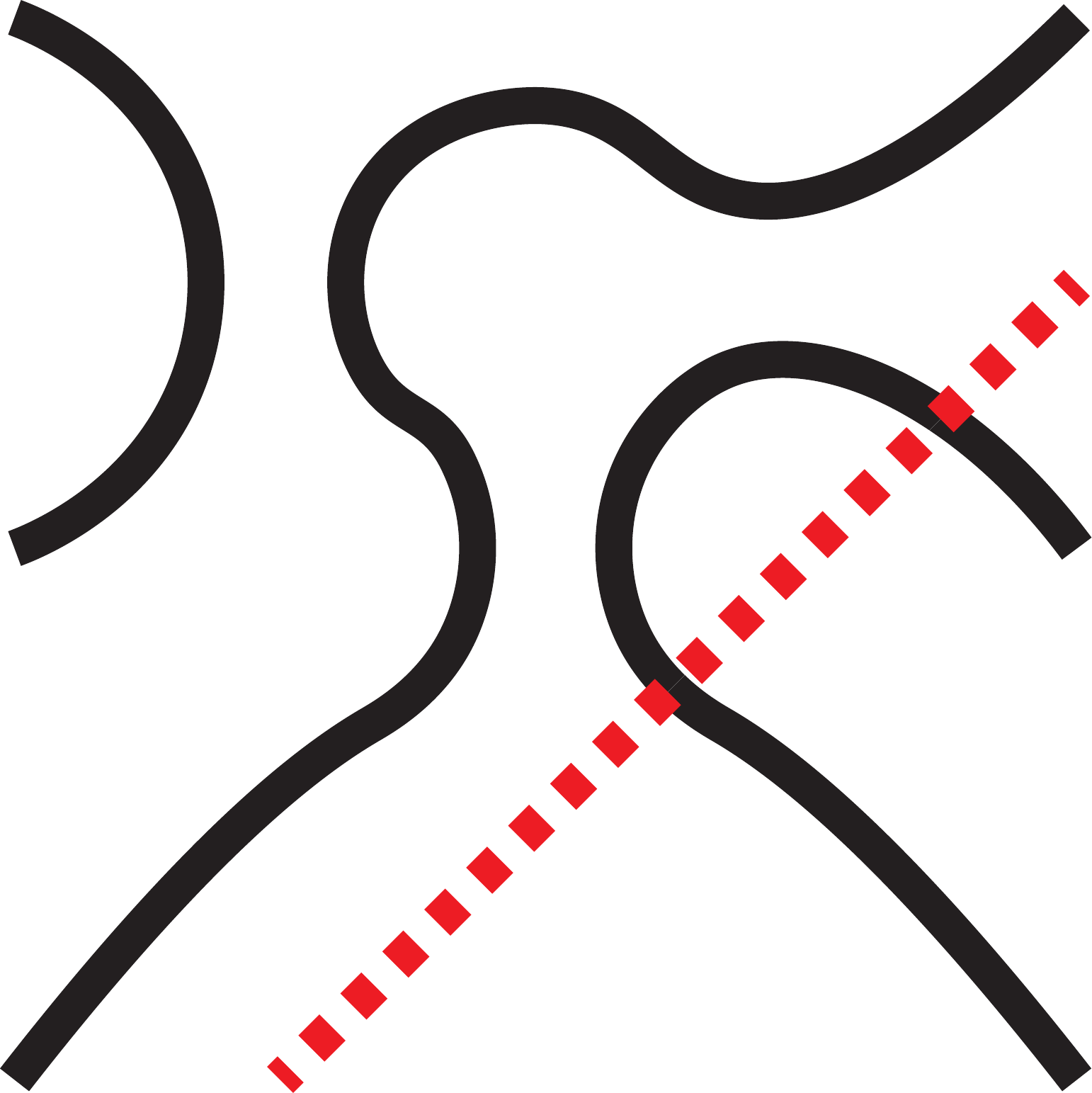} & & \boxed{\includegraphics[scale=0.05]{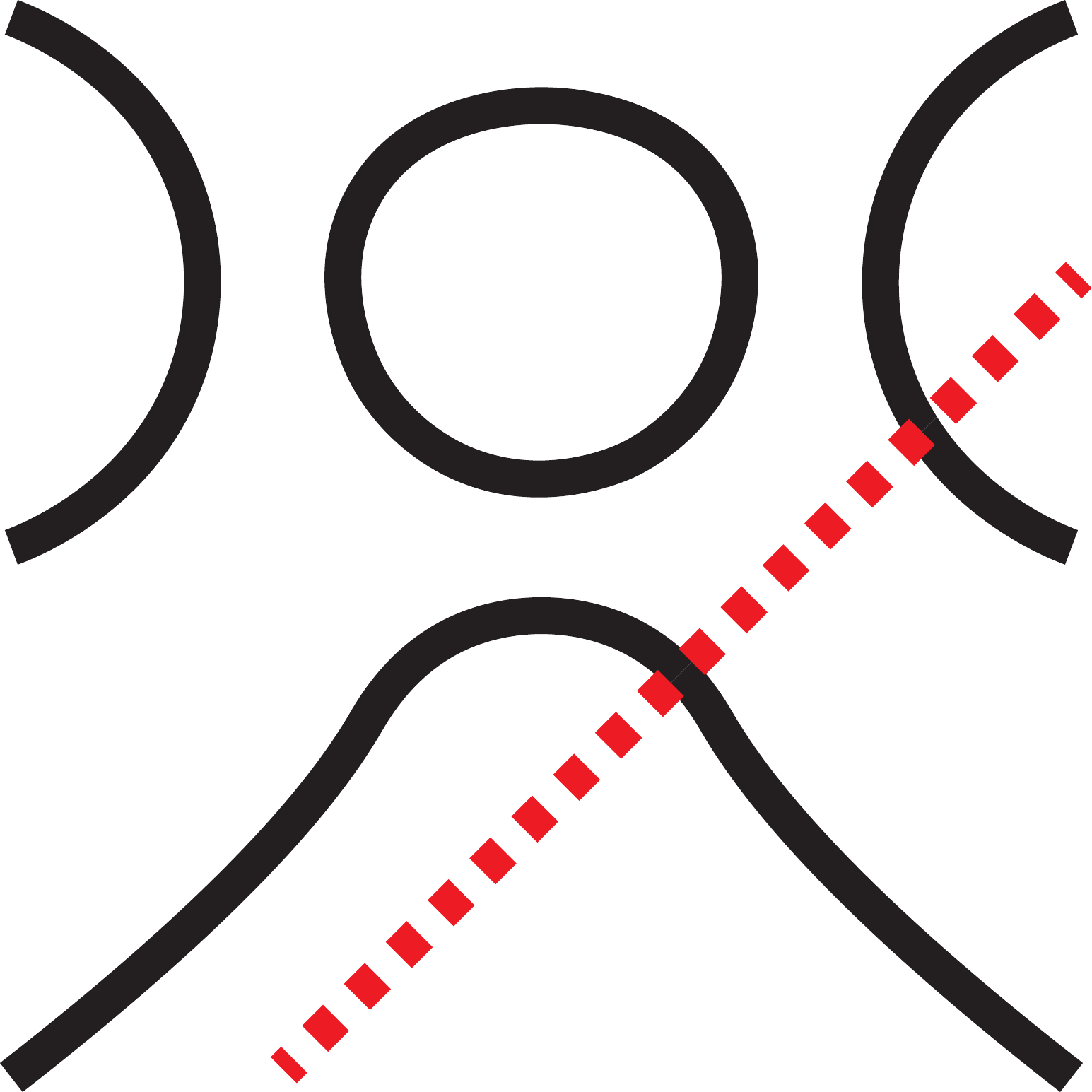}}
    \end{tikzcd}\vspace{0.1in}\hrule \vspace{0.1in}
    \begin{tikzcd}
    & \includegraphics[scale=0.05]{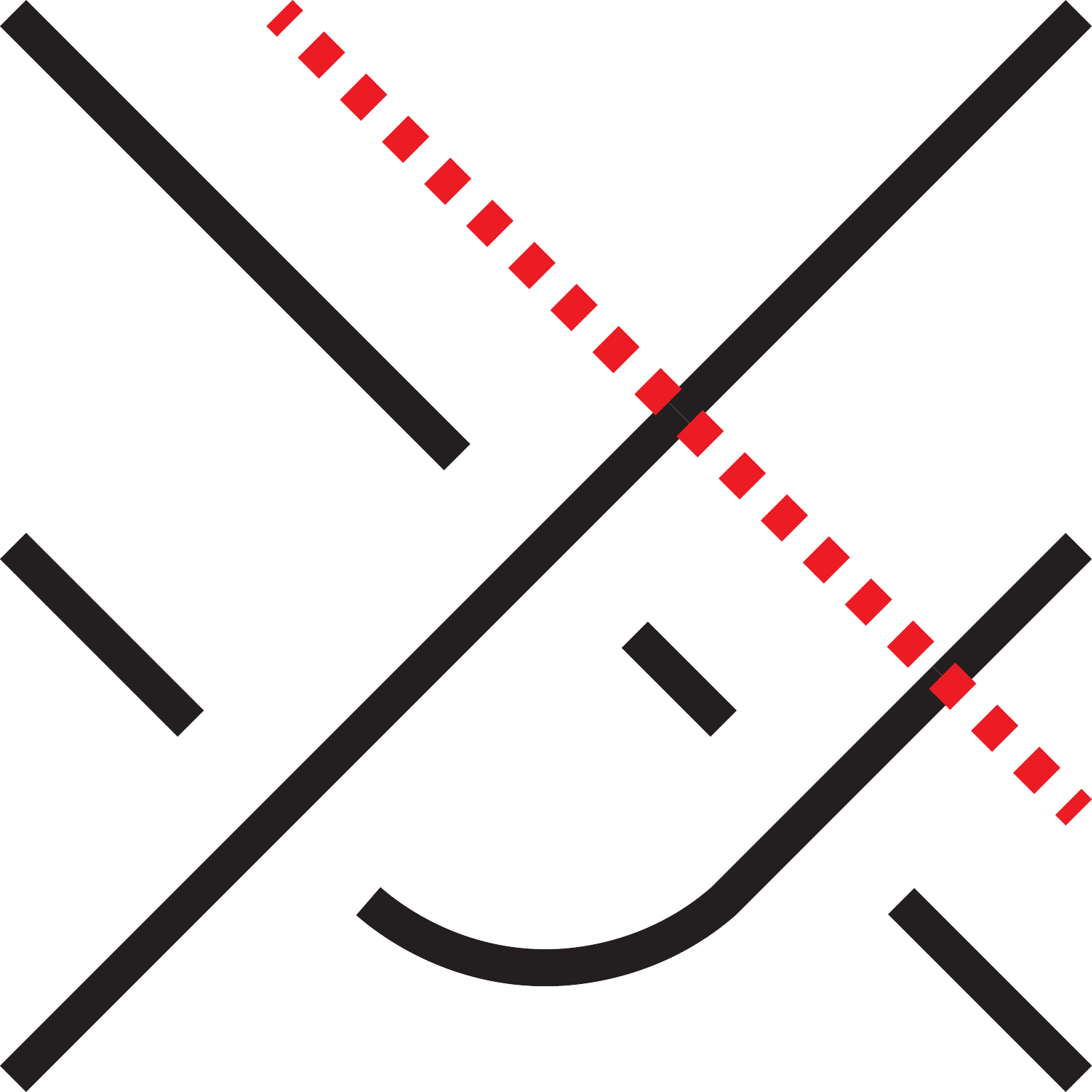} \arrow{ld}[swap]{100} \arrow[rd,"010"]& & & \includegraphics[scale=0.05]{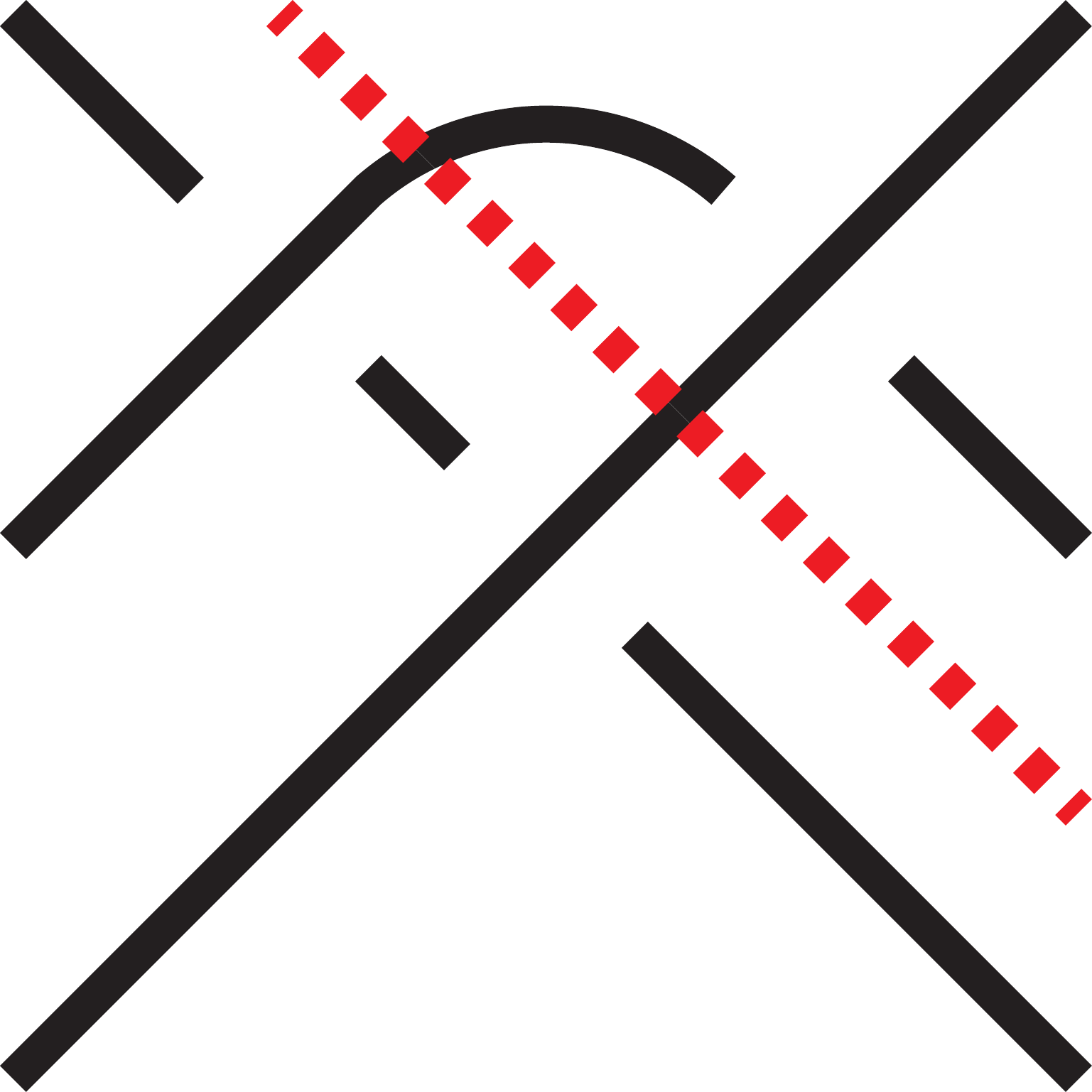} \arrow{ld}[swap]{100} \arrow[rd,"010"]& \\ 
    \includegraphics[scale=0.05]{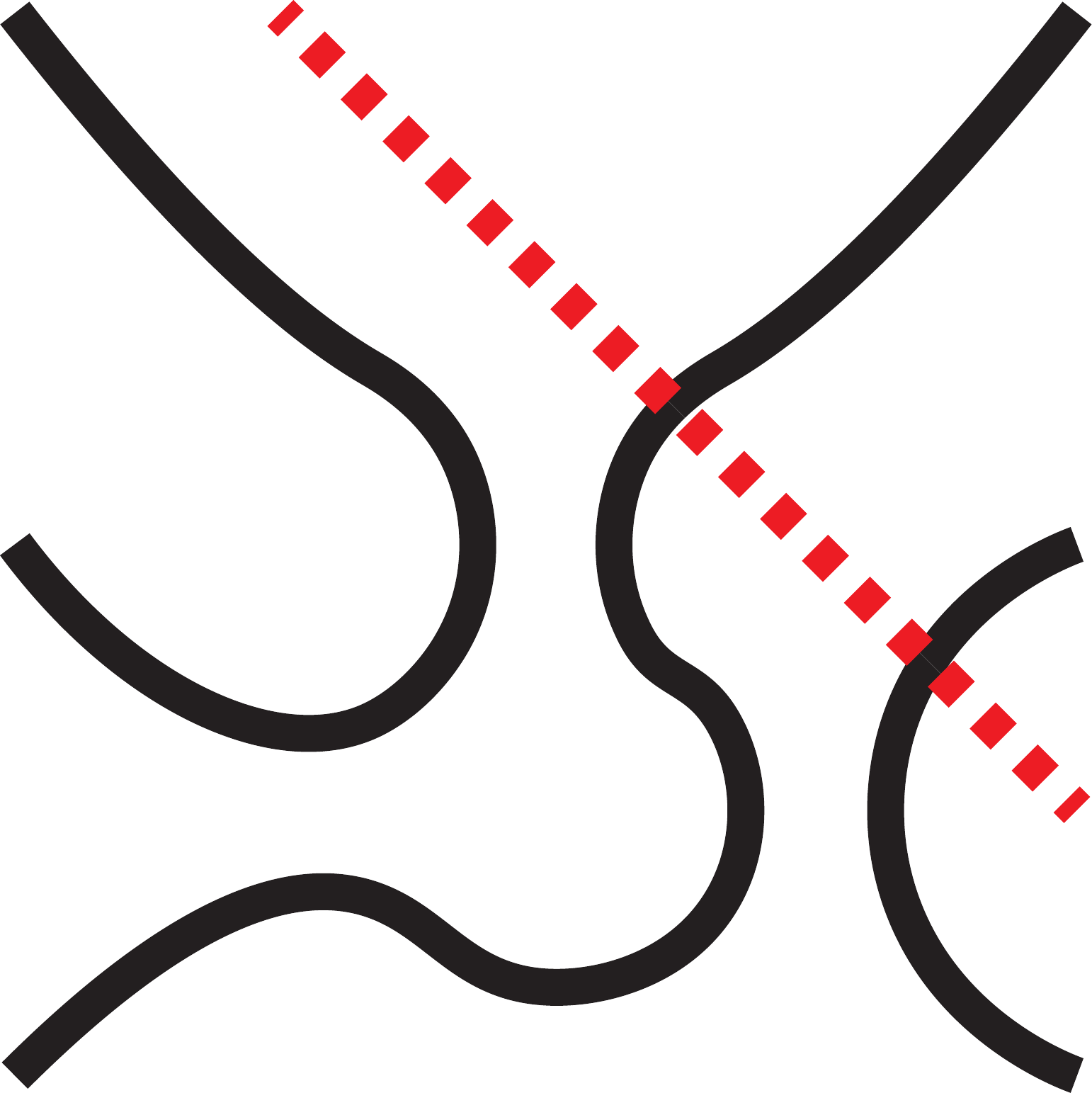} & & \includegraphics[scale=0.05]{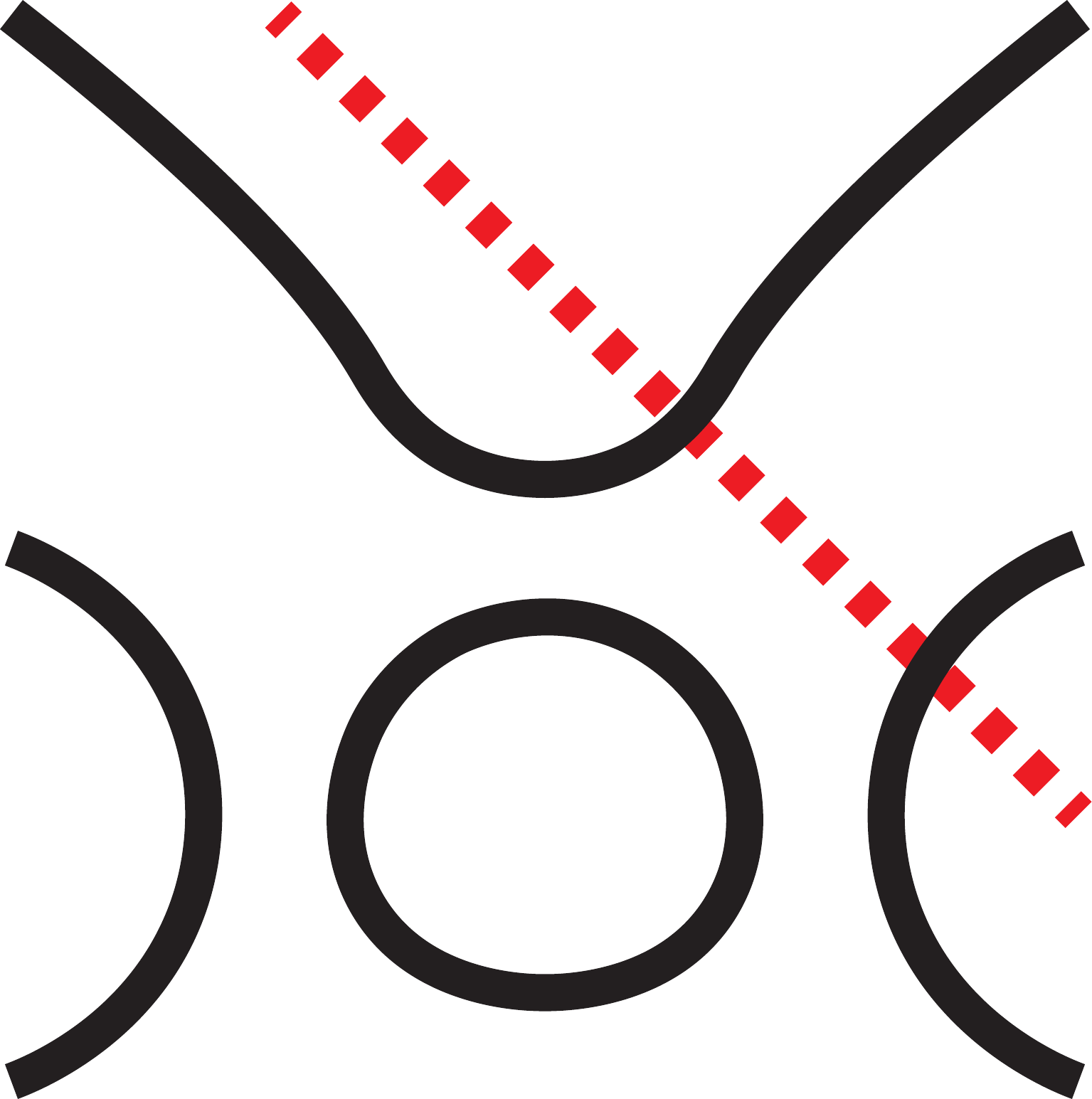} & \includegraphics[scale=0.05]{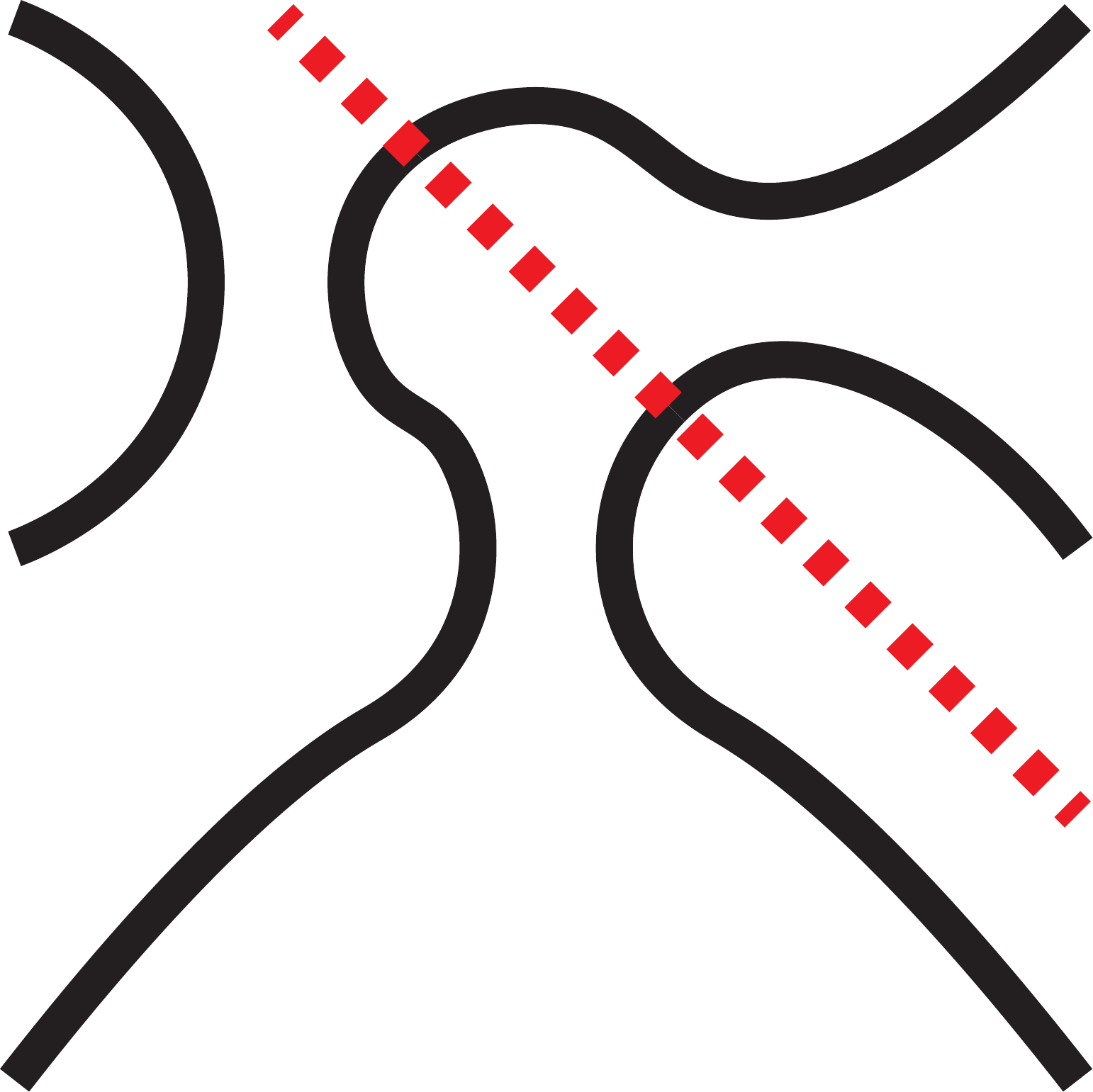} & & \boxed{\includegraphics[scale=0.05]{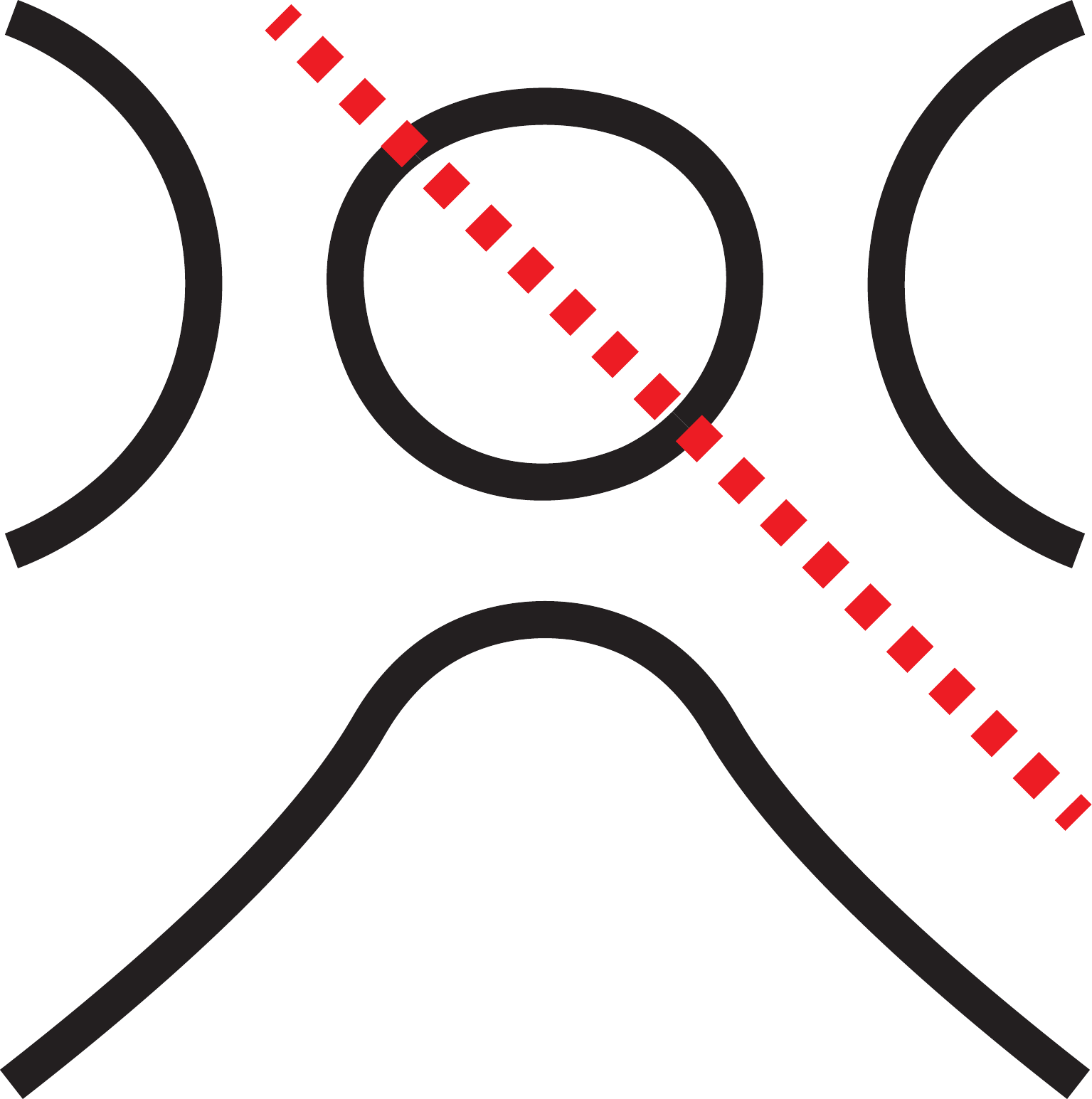}}
    \end{tikzcd}\vspace{0.1in}\hrule \vspace{0.1in}
    \begin{tikzcd}
    & \includegraphics[scale=0.05]{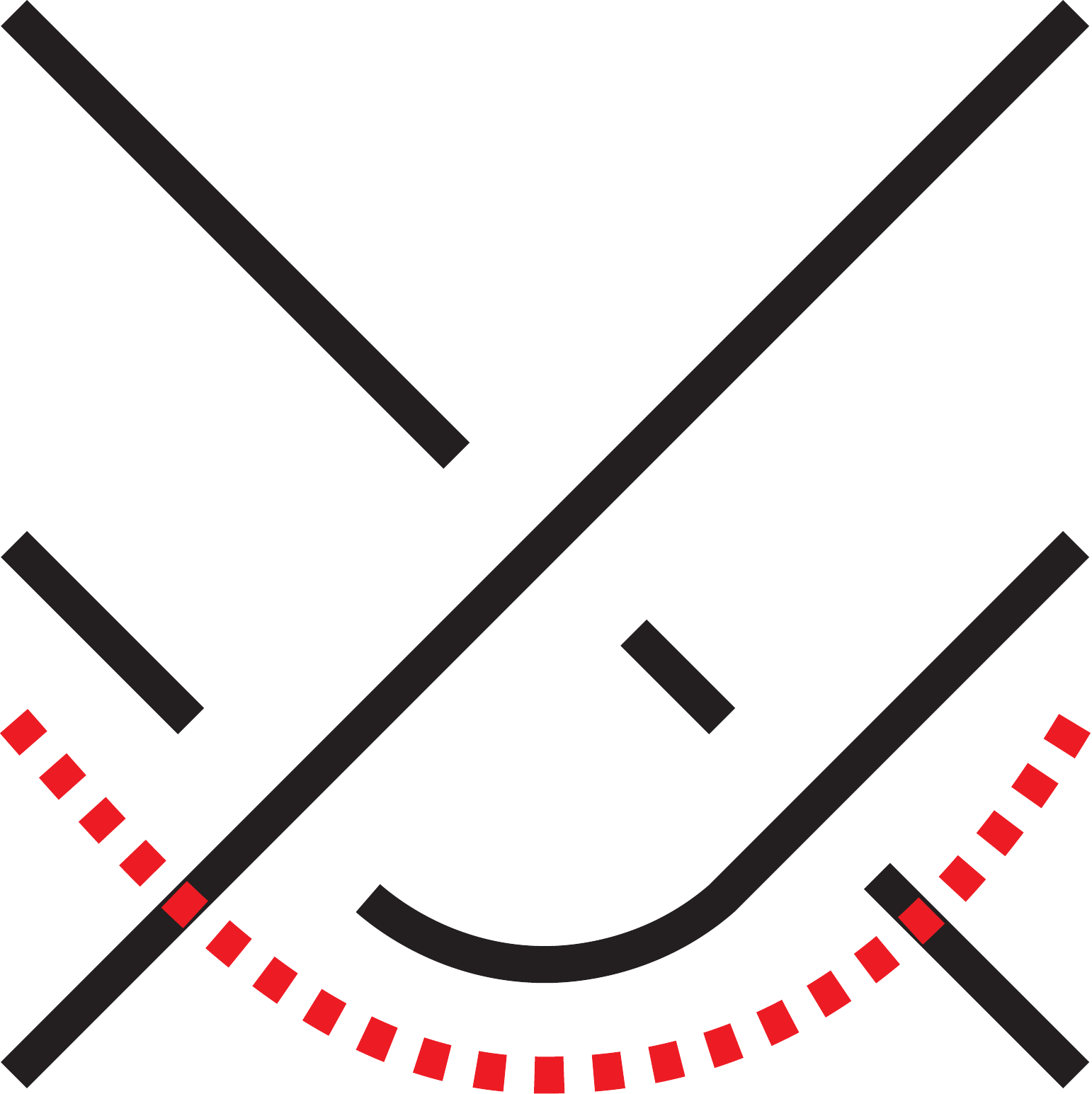} \arrow{ld}[swap]{100} \arrow[rd,"010"]& & & \includegraphics[scale=0.05]{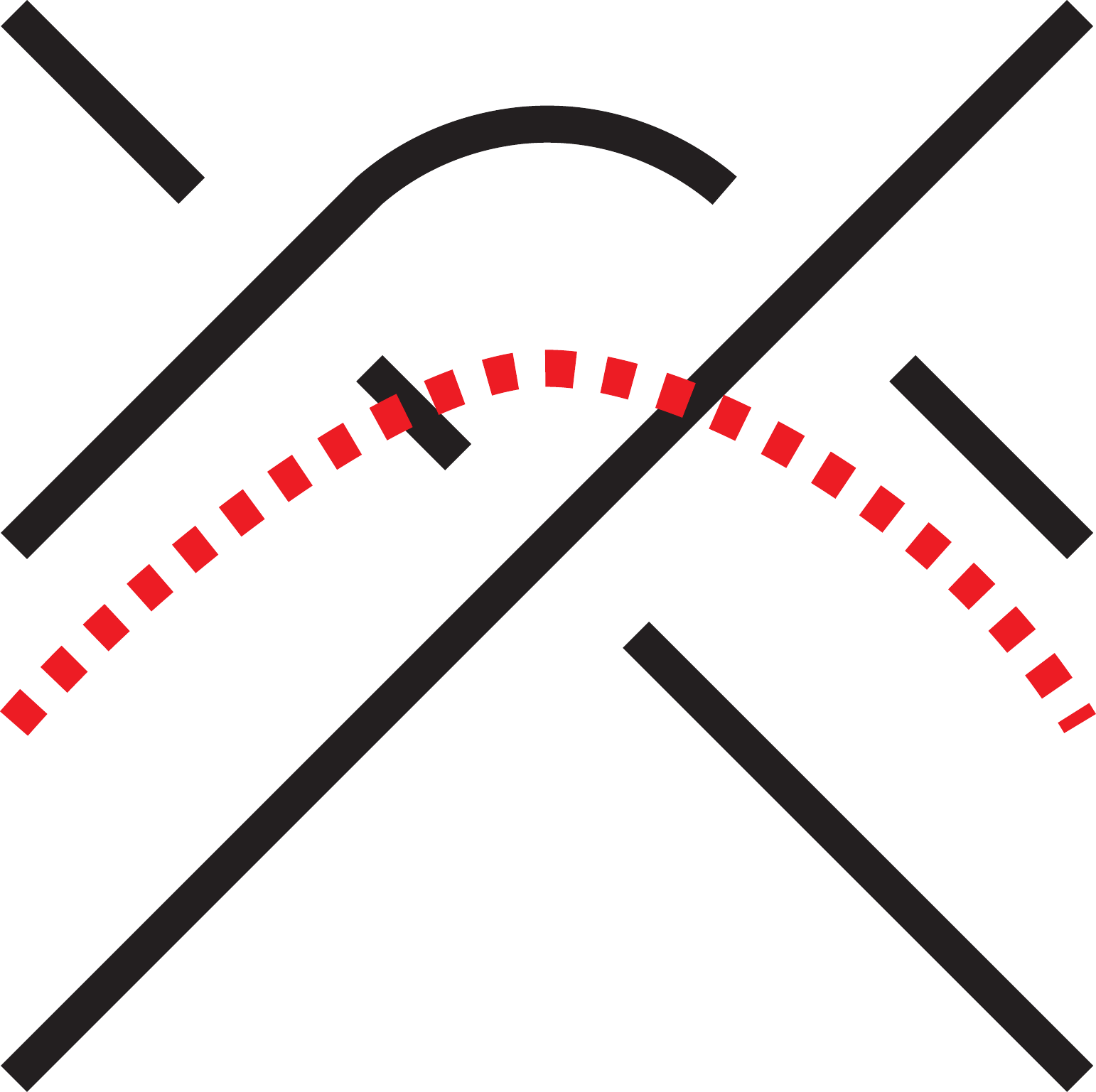} \arrow{ld}[swap]{100} \arrow[rd,"010"]& \\ 
    \includegraphics[scale=0.05]{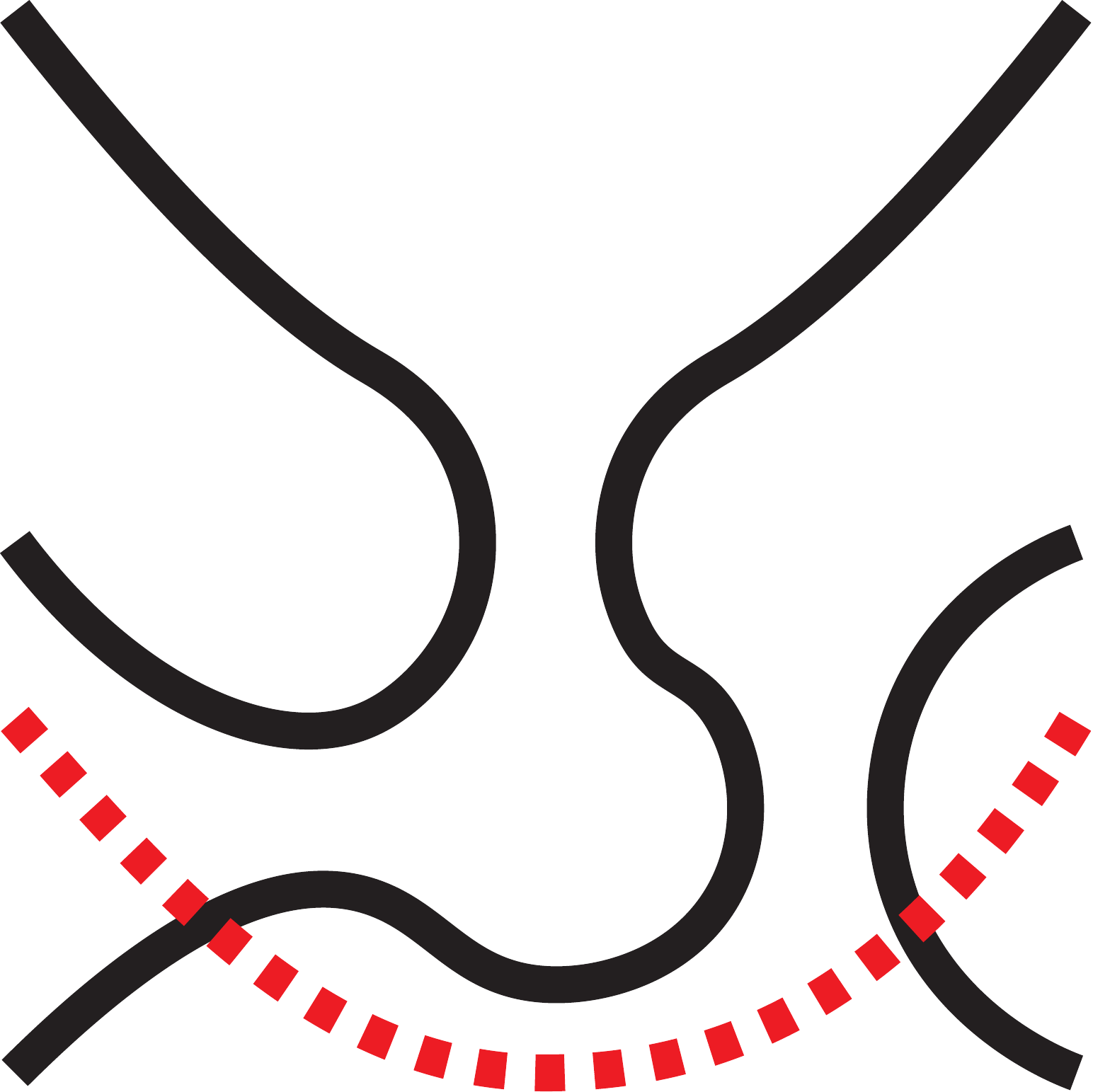} & & \includegraphics[scale=0.05]{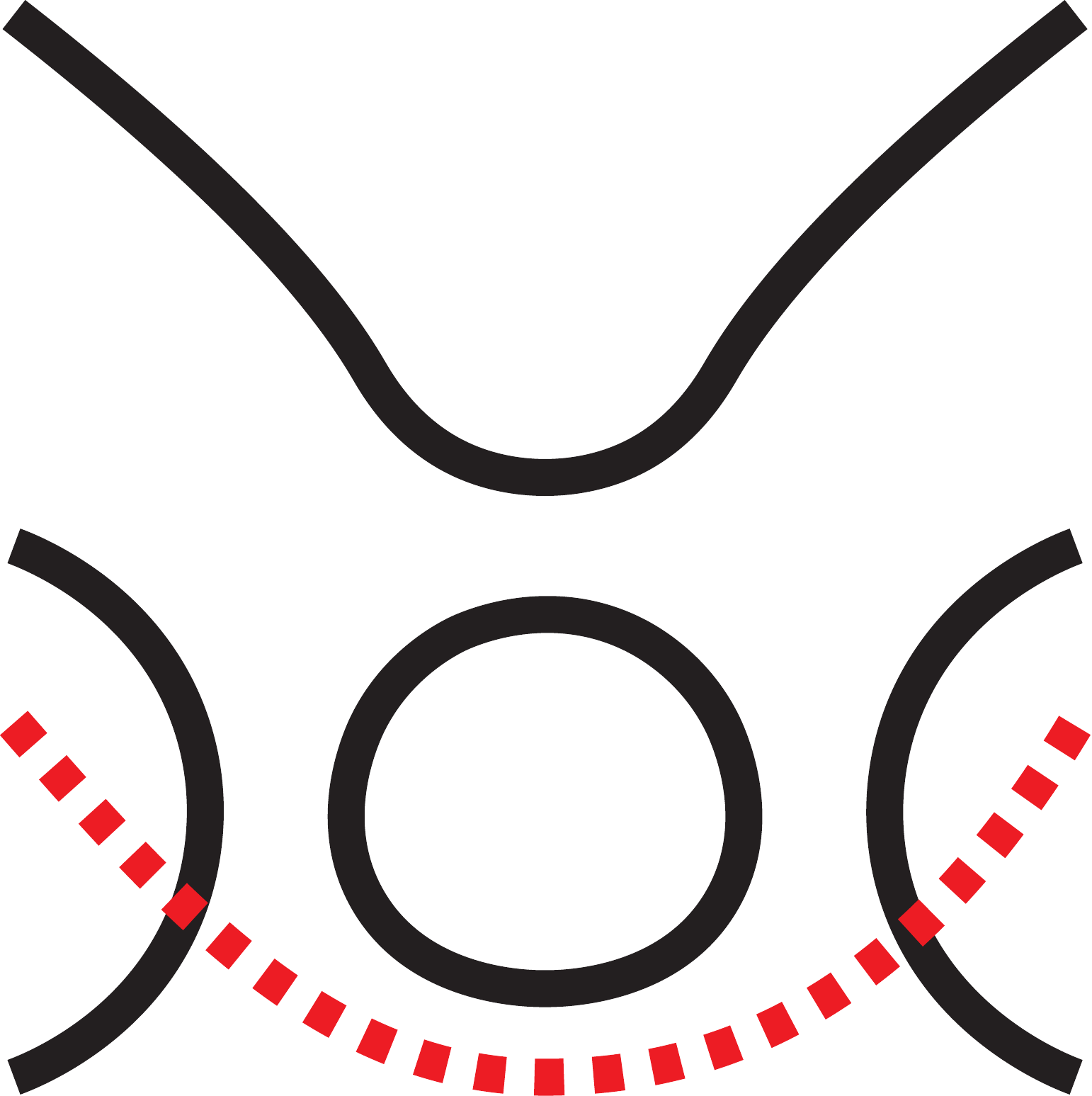} & \includegraphics[scale=0.05]{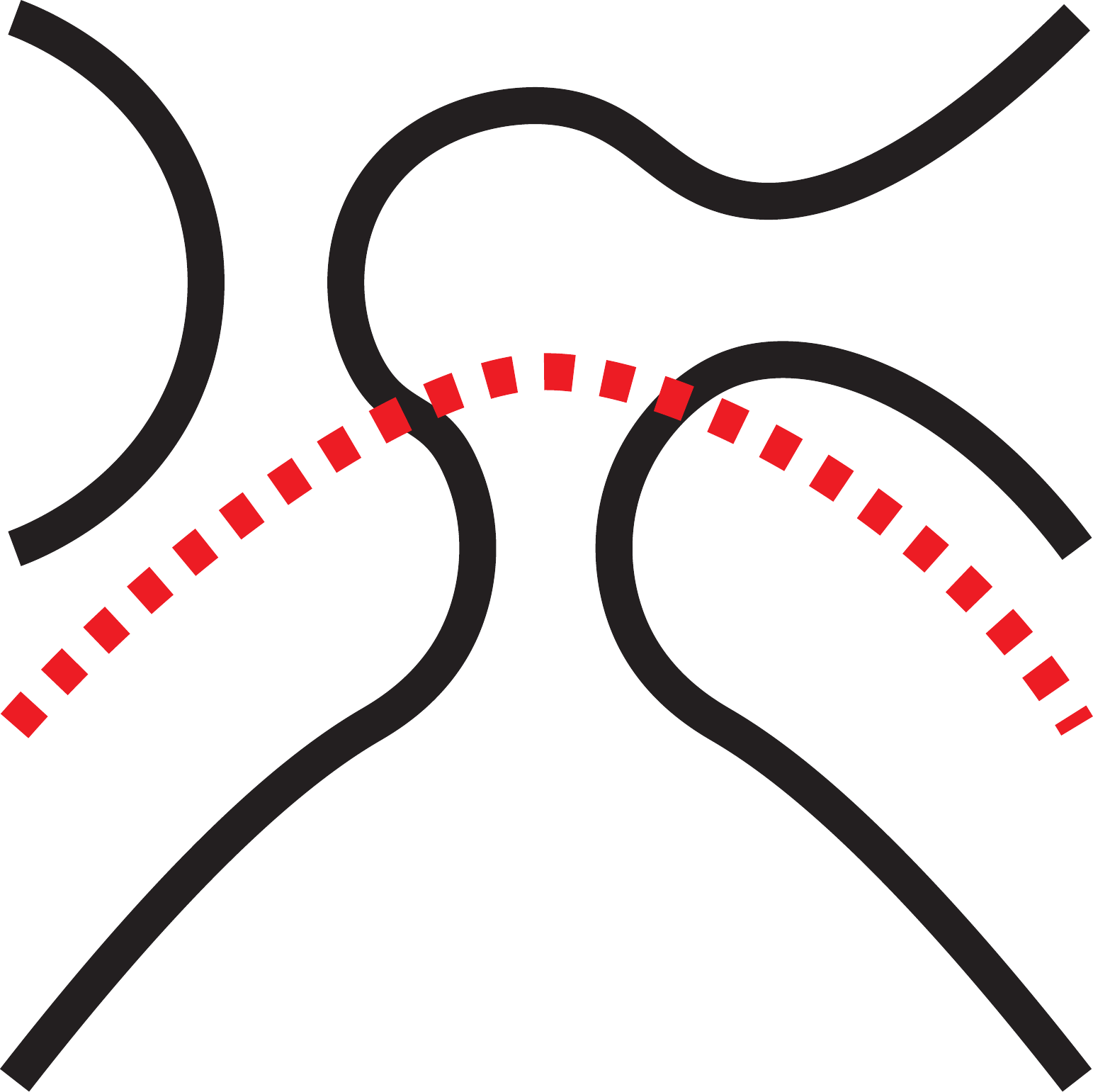} & & \includegraphics[scale=0.05]{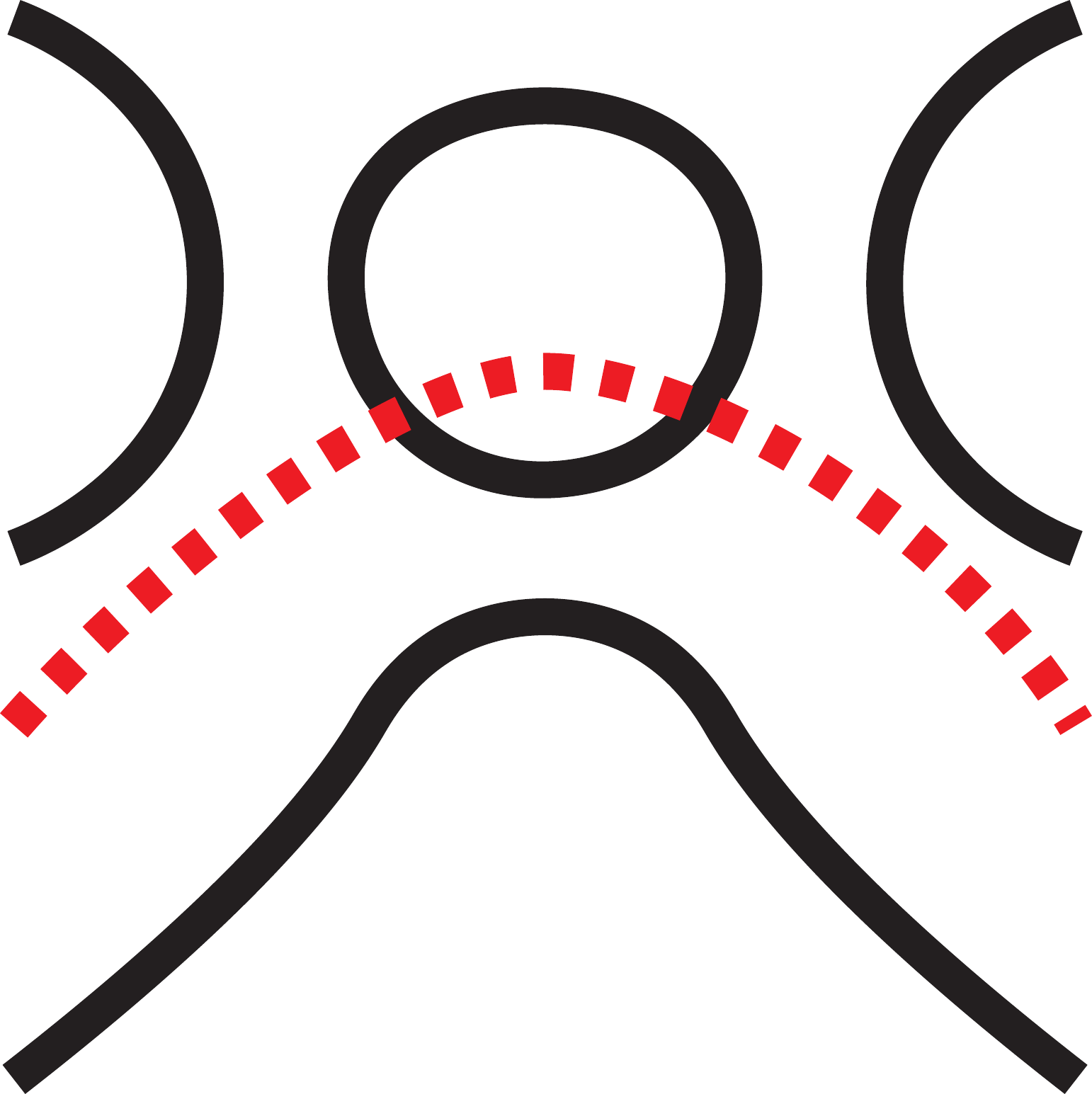}
    \end{tikzcd}
    \caption{Local intersections of the canonical shortcut with resolutions in the map ~\eqref{equ:cheR3map5}. On all three rows, either a resolution has the same local intersection number as those of 100-resolutions, or the term in ~\eqref{equ:cheR3map5} corresponding to the resolution vanishes.}
    \label{fig:u4r31stEx}
\end{figure}
Similar to Convention~\ref{conv:orionclosedcomps}, we may assume that the closed components of each one of the four resolutions are oriented to agree with the segment and closed components of the other three resolutions. Then, the algebraic intersection numbers outside the local disk are the same, and one only needs to count the intersection numbers inside, to show that $k_s\cdot \alpha$ is the same for all states in ~\eqref{equ:cheR3map5}.

On the first row of Figure ~\ref{fig:u4r31stEx}, the two intersections cancel each other trivially at every resolution except the boxed one. For the boxed resolution, if the intersections cancel each other, then all states have the same $k_s\cdot \alpha$. If not, then our convention for the choice of orientations on the closed components necessitates that the two arcs intersected by the shortcut are part of the segment component, and the map \[\scalebox{0.7}{\Dpioo}\otimes [c'_1x] \rightarrow\scalebox{0.7}{\Dpiio} \otimes [c'_1c'_2x]\] in $C(D')$ is $\nabla$. Therefore, the term\[\DpoioWithLetters{q:r}{r:q}{1}{\widetilde{p}}\,\,\,\,\,\,\,\otimes [c'_2x]\] in ~\eqref{equ:cheR3map5} corresponding to the boxed term vanishes.

On the second row, both 100-resolutions are isotopic and have the same intersection number inside the local disk. The 010-resolution on the left side either has the same local intersection number as 100-resolutions or the term itself vanishes. That is because a different local intersection number would imply a change of orientation outside the local disk, which would mean that the map \[\scalebox{0.7}{$\Dioo$}\,\otimes\, [c_1x] \rightarrow\,\scalebox{0.7}{$\Diio$} \otimes\, [c_1c_2x]\] is $\nabla$, and consequently the term \[\raisebox{0.2cm}{$\DoioWithLetters{p:q}{q:p}{1}{\widetilde{r}}$} \,\otimes [c_2x]\] vanishes. The boxed term on the second row has zero local intersection number. If the 100-resolution on the right side has nonzero local intersection number, then the two arcs (of the 100-resolution) inside the local disk intersected by the shortcut must have the same orientation (both upwards or downwards) which would mean that the map \[\scalebox{0.7}{\Dpioo}\otimes [c'_1x] \rightarrow\scalebox{0.7}{\Dpiio} \otimes [c'_1c'_2x]\] is $\nabla$, and then the boxed term vanishes.

On the third row, by a similar argument, either all local intersection numbers for all states are the same as the intersection numbers of 100-resolutions, or those with different intersection numbers vanish. 

Hence, the map in the formula ~\eqref{equ:cheR3map5} preserves the $u$-grading. This concludes the proof of Theorem ~\ref{thm:maintheorem}.

\begin{rem}
As stated in Theorem~\ref{thm.restrictW2knots}, restriction of the winding homology to knots yields the reduced Khovanov homology of knots. Therefore, in the special case of knots, the proof of invariance for the winding homology provides an alternative proof of invariance for the Khovanov homology of knots. Broadly speaking, the original proof of Khovanov in~\cite{Khovanov} and Bar-Natan's geometric proof in~\cite{Bar_Natan_2005} are ``categorifications of the proof" of invariance for the Jones polynomial. In our proof, we make this categorification explicit at the generator level. That is, a cancellation of the Kauffman brackets of two diagrams turns into a cancellation of arrows between the generators for these diagrams. For example, in the proof of invariance under Reidemeister move I, the cancellation $\langle \vcenter{\hbox{\NoCirc}} \rangle -q\cdot q^{-1} \cdot \langle \Arc \rangle$ becomes the cancellation of the arrows $\vcenter{\hbox{\NoCirc}}^1 \longrightarrow \vcenter{\hbox{\WithCirc}}\myfrac{X}{1}+\vcenter{\hbox{\WithCirc}}\myfrac{1}{X}$ and $\vcenter{\hbox{\NoCirc}}^X \longrightarrow \vcenter{\hbox{\WithCirc}}\myfrac{X}{X}$. However, these cancellations are carried out by using the zigzag lemma, which comes at the price of potentially introducing extra arrows. We resolve this issue by utilizing the monotonic reduction algorithm.
\end{rem}

\begin{rem}
In the proofs of invariance under the Reidemeister move II and III, the case-by-case analysis of the invariance of $u$-gradings under the chain homotopy equivalence maps by using the canonical shortcut is made with objective of making the verification clear to the reader. It is not a coincidence that $u$-gradings happen to be preserved in all cases. There is brief (but less clear) explanation of the reason behind, with the symbolic notation as follows. For the Reidemeister move II, the map $\raisebox{-.05cm}{\rotatebox{90}{$\LeftSoft\,\RightSoft$}} \myfrac{p}{q}\longrightarrow \raisebox{-.05cm}{$\LeftSoft^{p:q}\,\RightSoft^{q:p}$}$ (see Remark~\ref{rem:r2summary}) preserves $u$-gradings, as all edge maps in Figure~\ref{fig:localcoborisms and induced maps}. Therefore, the generators $\TwoWaves\myfrac{p}{q}$ and $\LeftSoft^{p:q}\SimpCirc^1\RightSoft^{q:p}$ have the same $u$-gradings, because the the small circle in the middle of the term $\LeftSoft^{p:q}\SimpCirc^1\RightSoft^{q:p}$ has no effect on the $u$-grading. Thus, the chain equivalence map
\begin{equation}
    \TwoWaves\myfrac{p}{q}\otimes [c_2x]+\LeftSoft^{p:q}\SimpCirc^1\RightSoft^{q:p} \otimes [c_1x] \xmapsto{(-1)^i} \RIInocros \myfrac{p}{q} \otimes [x]
\end{equation}
preserves $u$-gradings. For Reidemeister move III, the argument for $u$-gradings is similar.
\end{rem}


\section{Computations and Examples}\label{sec.computations}

In this section, we present examples of knotoid pairs comparing the strength of the invariants, as claimed in Figure ~\ref{fig:allinvariantscompared}, and other computational results.

The program to compute the winding homology of knotoids is written in Mathematica language, and is available online; see ~\cite{Kutluay}. We took Bar-Natan's program for Khovanov homology of knots in his Mathematica package $\lq\lq$KnotTheory" (see ~\cite{BarNatanWeb}) as the base for our program, and expanded on the implementation to knotoids, and the winding homology. Knotoid diagrams are presented in planar diagram (PD) notation, and the problem of the choice of shortcut on diagrams is circumvented by Lemma ~\ref{lem:shortcutformula}. Our program also includes commands for direct (and naturally faster) computation of the Turaev polynomial, the Khovanov homology, and the Jones polynomial of knotoids. The simplest separation examples that we were able to find are presented below.
\begin{ex}
The knotoids $K_1$ and $K_2$ (see Figure ~\ref{fig:k1andk2}\footnote{The knotoid diagrams in the figures of this section are generated with the help of $\lq\lq$DrawPD" command of KnotTheory package.}) have the same Jones polynomial, and the same Khovanov homology, but they are distinguished by the Turaev polynomial.
\end{ex}
Recalling that $W_{K}(t,q,u)$ denotes the Poincar\'e polynomial of the winding homology $H(K)=\bigoplus_{i,j,k} H_{i,j}^k(K)$ for a knotoid $K\subset S^2$, we compute
\begin{align*}
    W_{K_1}(t,q,u)&=A(t,q)+u^2B(t,q), \\
    W_{K_2}(t,q,u)&=A(t,q)+\frac{1}{u^2}B(t,q),
\end{align*}
where
\begin{align*}
    A(t,q)&= 7+\frac{1}{q^4 t^2} + \frac{3}{q^2 t} + 12 q^2 t + 16 q^4 t^2 + 15 q^6 t^3 + 
11 q^8 t^4 + 5 q^{10} t^5 + q^{12} t^6,\\
    B(t,q)&= 19q + \frac{1}{q^7 t^4} + \frac{4}{q^5 t^3} + \frac{9}{q^3 t^2} + \frac{15}{q t} + 19 q^3 t + 14 q^5 t^2 + 7 q^7 t^3 + 2 q^9 t^4.
\end{align*}
Using the substitutions, we have
\begin{align}
T_{K_1} (q,u) = W_{K_1}(t=-1,q,u) &\not= W_{K_2}(t=-1,q,u) =T_{K_2}(q,u), \nonumber \\
Kh_{K_1}(t,q)=W_{K_1}(t,q,u=1)&=W_{K_2}(t,q,u=1)=Kh_{K_2}(t,q), \nonumber \\
J_{K_1}(q)=Kh_{K_1}(t=-1,q)&=Kh_{K_2}(t=-1,q)=J_{K_2}(q). \nonumber
\end{align}
We conclude that
\begin{itemize}
    \item[i.] the Turaev polynomial is stronger than the Jones knotoid polynomial,
    \item[ii.] the winding homology is stronger than the Khovanov knotoid homology,
    \item[iii.] the Turaev polynomial can distinguish a pair with the same Khovanov knotoid homology.
\end{itemize}
\begin{rem}
For knots, the Turaev polynomial is equal to the Jones polynomial, and the winding homology is equal to the Khovanov homology, since it is possible to choose a shortcut that does not intersect the knotoid. 
\end{rem}
\begin{rem}
Note that the $u$-breadth of $W_K$ is less than or equal to twice the minimum number of intersections between shortcut $\alpha$ and $K$. For this reason, only two distinct powers of $u$ appear in $W_{K_1}$, and $W_{K_2}$. This is also the case for $W_{K_3}$ through $W_{K_6}$. All knotoids from $K_1$ through $K_6$ are obtained by performing a single $\Omega_-$ (or $\overline{\Omega}_-$) move on the arc labeled $\lq\lq 1"$ in the PD notation (as recorded in the KnotTheory database) of the corresponding knot. More precisely, for a knot with $n$ crossings, the overpass or underpass of the knot, between the labels $\lq\lq 1"$ and $\lq\lq2n"$, is deleted.
\end{rem}

\begin{figure}[hbt!]
    \centering
    \begin{subfigure}[b]{0.46\textwidth}
        \centering
        \includegraphics[width=\textwidth]{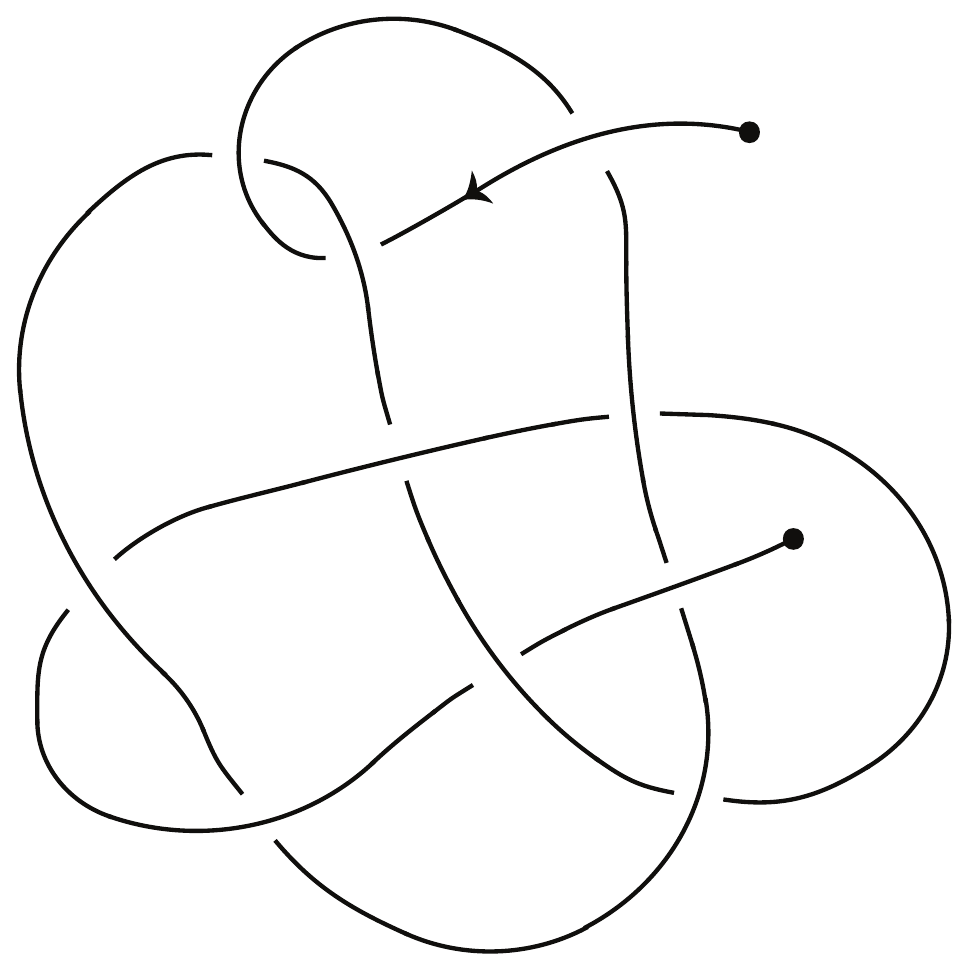}
        \caption{\(K_1\)}
    \end{subfigure}
    \hfill
    \begin{subfigure}[b]{0.46\textwidth}
        \centering
        \includegraphics[width=\textwidth]{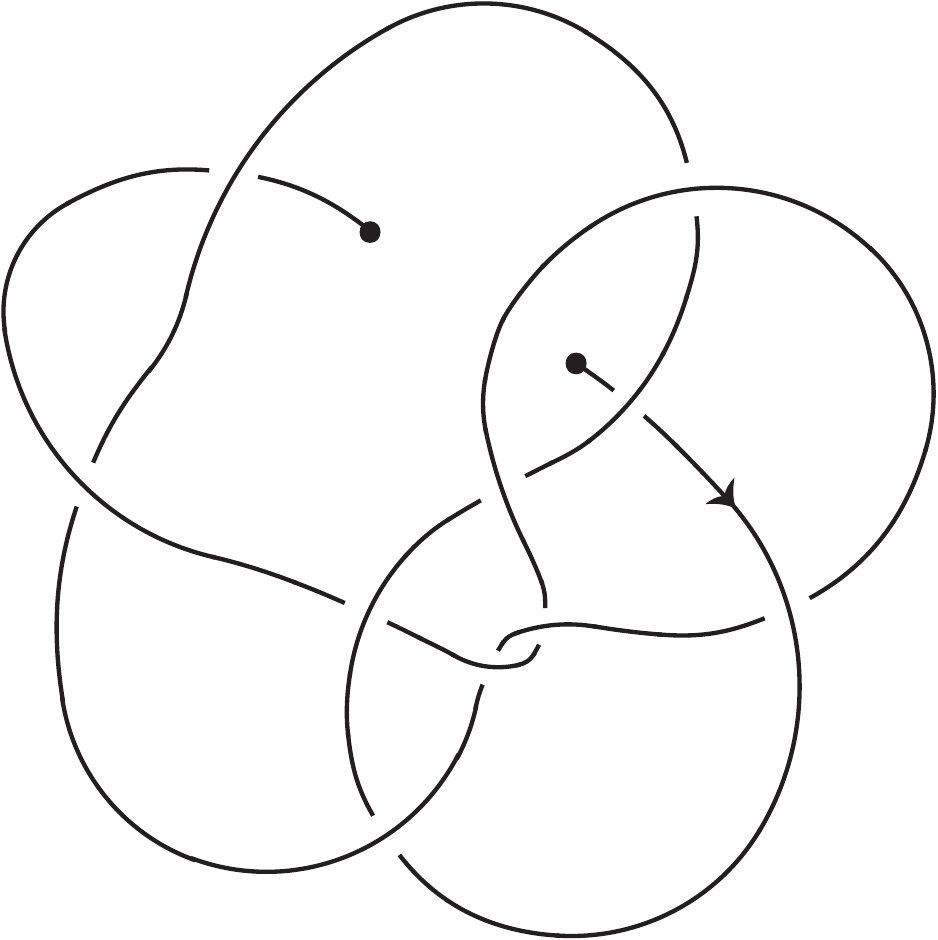}
        \caption{\(K_2\)}
    \end{subfigure}

    \caption{$K_1$ is obtained from the knot $11^a_{138}$ by applying an $\Omega_-$ move, and $(K_1)_-=11^a_{138}$. $K_2$ is obtained from the knot $\overline{11}^a_{285}$ by applying an $\overline{\Omega}_-$ move, and $(K_2)_+=\overline{11}^a_{285}$.}
    \label{fig:k1andk2}
\end{figure}
\begin{figure}[hbt!]
    \centering
    \begin{subfigure}[b]{0.46\textwidth}
        \centering
        \includegraphics[width=\textwidth]{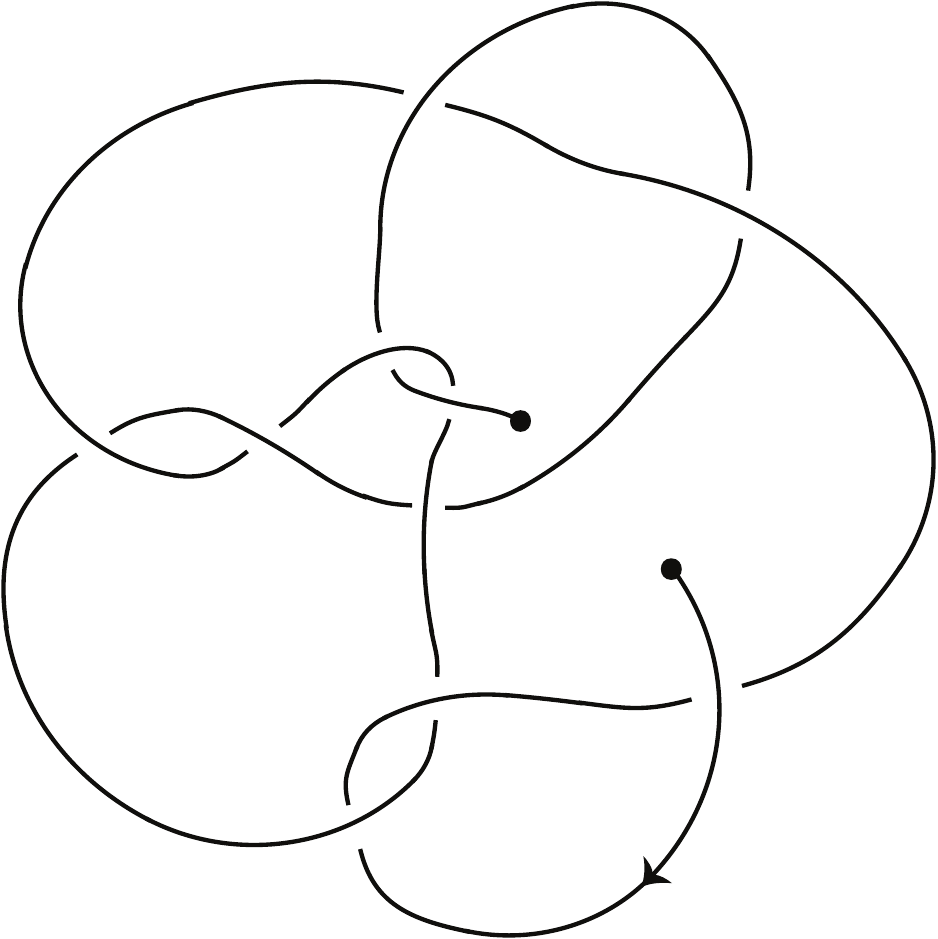}
        \caption{\(K_3\)}
    \end{subfigure}
    \hfill
    \begin{subfigure}[b]{0.46\textwidth}
        \centering
        \includegraphics[width=\textwidth]{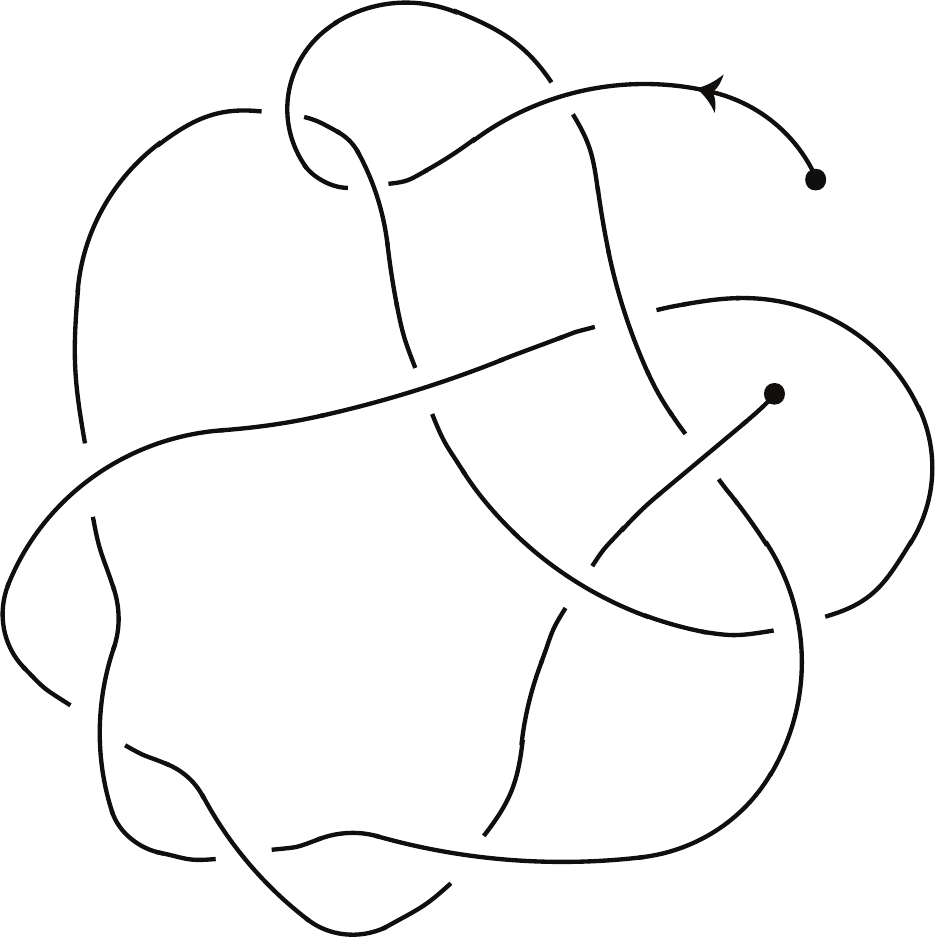}
        \caption{\(K_4\)}
    \end{subfigure}

    \caption{$K_3$ is obtained from the knot $11^a_{217}$ by applying an $\Omega_-$ move, and $(K_3)_-=11^a_{217}$. $K_4$ is obtained from the knot $13^n_{3187}$ by applying an $\Omega_-$ move, and $(K_4)_-=13^n_{3187}$.}
    \label{fig:k3andk4}
\end{figure}
\begin{ex}
The knotoids $K_3$ and $K_4$ (see Figure ~\ref{fig:k3andk4}) have the same Jones polynomial and the same Turaev polynomial, but they are distinguished by the Khovanov homology.
\end{ex}More explicitly,
\begin{align*}
W_{K_3}(t,q,u)&= \frac{13}{q^2} + \frac{1}{q^8 t^3} + \frac{5}{q^6 t^2} + \frac{9}{q^4 t} + 13 t + 9 q^2 t^2 + 
 4 q^4 t^3 + q^6 t^4 +u^2 C(t,q), \\
W_{K_4}(t,q,u)&= \frac{13}{q^2} + \frac{1}{q^{10} t^5} + \frac{1}{q^{10} t^4} + \frac{1}{q^8 t^4} + \frac{2}{q^8 t^3} + \frac{5}{q^6 t^2} + \frac{9}{q^4 t} \\
&+ 13 t + 9 q^2 t^2 + 4 q^4 t^3 +q^6 t^4 +u^2 C(t,q),
\end{align*}
where
\begin{align*}
    C(t,q)=\frac{13}{q} + \frac{1}{q^{13} t^6} + \frac{3}{q^{11} t^5} + \frac{8}{q^9 t^4} + \frac{14}{q^7 t^3} + \frac{18}{q^5 t^2} + \frac{18}{q^3 t} + 7 q t + 2 q^3 t^2.
\end{align*}
Using the substitutions, we obtain
\begin{align}
T_{K_3}(q,u) = W_{K_3}(t=-1,q,u) &= W_{K_4}(t=-1,q,u) =T_{K_4}(q,u), \nonumber \\
Kh_{K_3}(t,q)=W_{K_3}(t,q,u=1)&\not=W_{K_4}(t,q,u=1)=Kh_{K_4}(t,q), \nonumber \\
J_{K_3}(q)=T_{K_3}(q,u=1)&=T_{K_4}(q,u=1)=J_{K_4}(q). \nonumber
\end{align}
We conclude that
\begin{itemize}
    \item[i.] the Khovanov knotoid homology is stronger than the Jones knotoid polynomial,
    \item[ii.] the winding homology is stronger than the Turaev polynomial,
    \item[iii.] the Khovanov knotoid homology can distinguish a pair with the same Turaev polynomial. The other direction was shown in the previous example, and thus, the Turaev polynomial and the Khovanov knotoid homology are not comparable.
\end{itemize}
\begin{rem}
It is well known that the Khovanov knot homology is a stronger invariant than the Jones knot polynomial. The example above shows that this also holds for pure knotoids.
\end{rem}

In $W_{K_3}$, and $W_{K_4}$, the terms containing $u^2$ (or the terms with odd powers of $q$) are the same, and only the terms with even powers of $q$ differ. In general, this is not necessarily the case. The knotoids $K_5$ and $K_6$ (see Figure ~\ref{fig:k5andk6}) also have the same Jones polynomial, and Turaev polynomial. Their winding homology, however, differs on terms that have nonzero powers of $u$ (which have odd powers of $q$) as well as those with even powers of $q$, as follows
\begin{align}
W_{K_5}(t,q,u)&= \frac{1}{q^2} + \frac{1}{q^{16} t^7} + \frac{3}{q^{14} t^6} + \frac{6}{q^{12} t^5} + \frac{9}{q^{10} t^4} + \frac{9}{q^8 t^3} + \frac{7}{q^6 t^2} + \frac{3}{q^4 t} \nonumber \\ 
 &+\frac{1}{u^2}\left( \frac{3}{q^3} + \frac{1}{q^{17} t^8} + \frac{2}{q^{15} t^7} + \frac{3}{q^{13} t^6} + \frac{2}{q^{11} t^5}+ \frac{1}{q^9 t^3} + \frac{3}{q^7 t^2} + \frac{4}{q^5 t} + \frac{t}{q} \right), \nonumber
\end{align}
and
\begin{align}
W_{K_6}(t,q,u)&= \frac{1}{q^4} + \frac{1}{q^2} + \frac{1}{q^{16} t^7} + \frac{3}{q^{14} t^6} + \frac{6}{q^{12} t^5} + \frac{9}{q^{10} t^4} + \frac{9}{q^8 t^3} + \frac{7}{q^6 t^2} + \frac{4}{q^4 t} \nonumber \\ 
& +\frac{1}{u^2}\left( \frac{3}{q^3} + \frac{1}{q^{17} t^8} + \frac{2}{q^{15} t^7} + \frac{3}{q^{13} t^6} + \frac{3}{q^{11} t^5}\right. \nonumber \\ 
&+ \left. \frac{1}{q^{11} t^4} + \frac{1}{q^9 t^4}+ \frac{2}{q^9 t^3} + \frac{3}{q^7 t^2} + \frac{4}{q^5 t} + \frac{t}{q} \right ). \nonumber
\end{align}
Using the substitutions, we obtain
\begin{align}
T_{K_5}(q,u) = W_{K_5}(t=-1,q,u) &= W_{K_6}(t=-1,q,u) =T_{K_6}(q,u), \nonumber \\
Kh_{K_5}(t,q)=W_{K_5}(t,q,u=1)&\not=W_{K_6}(t,q,u=1)=Kh_{K_6}(t,q), \nonumber \\
J_{K_5}(q)=T_{K_5}(q,u=1)&=T_{K_6}(q,u=1)=J_{K_6}(q). \nonumber
\end{align}

\begin{figure}[hbt!]
    \centering
    \begin{subfigure}[b]{0.46\textwidth}
        \centering
        \includegraphics[width=\textwidth]{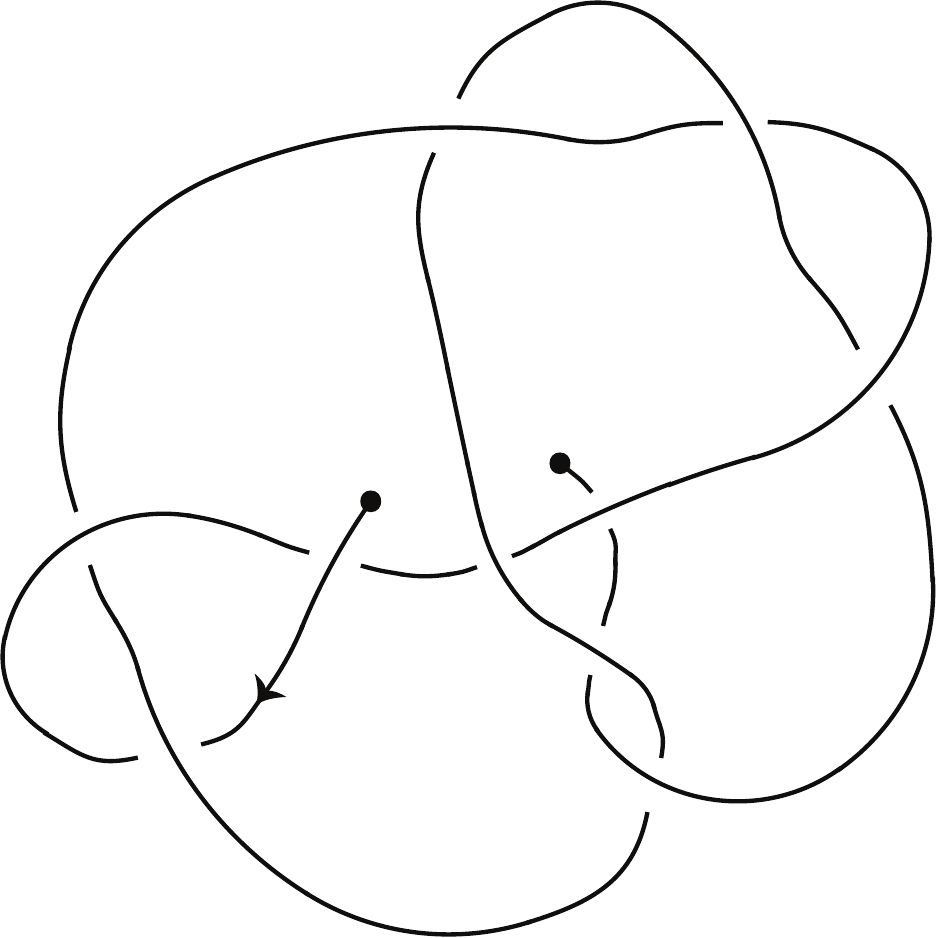}
        \caption{\(K_5\)}
    \end{subfigure}
    \hfill
    \begin{subfigure}[b]{0.46\textwidth}
        \centering
        \includegraphics[width=\textwidth]{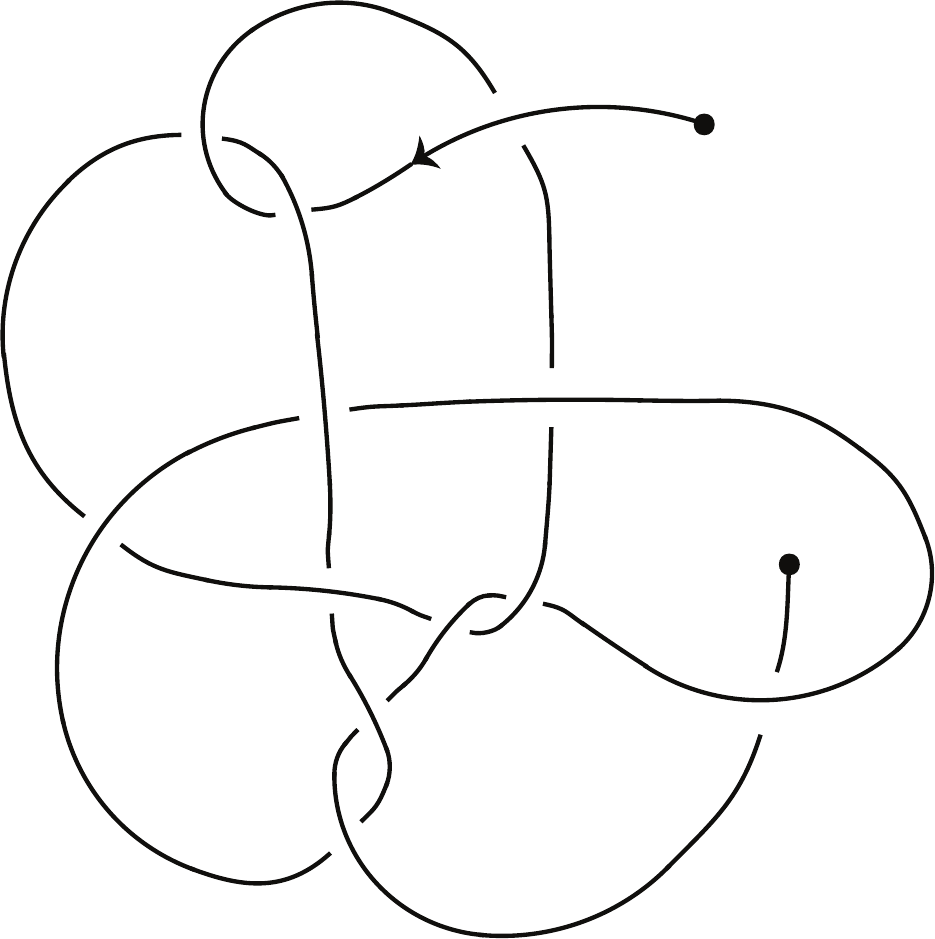}
        \caption{\(K_6\)}
    \end{subfigure}

    \caption{$K_5$ is obtained from the knot $11^n_{113}$ by applying an $\overline{\Omega}_-$ move, and $(K_5)_+=11^n_{113}$. $K_6$ is obtained from the knot $13^n_{2685}$ by applying an $\overline{\Omega}_-$ move, and $(K_6)_+=13^n_{2685}$.}
    \label{fig:k5andk6}
\end{figure}

The examples above do not rule out the possibility that the Turaev polynomial and the Khovanov knotoid homology together as a combined invariant distinguishes any knotoid pair that are distinguished by their winding homology. Next, we answer this question: is there a pair of knotoids that have the same Turaev polynomial, and the same Khovanov knotoid homology, simultaneously, yet they are distinguished by their winding homology? We show that such pairs of knotoids exist indeed. Our strategy to find such an example is to look for knotoids whose Turaev polynomials remain the same under the operation $K \leadsto \textnormal{Sym}(\textnormal{Mir}(K))$, since we know that Khovanov homology does not change under this operation; see Proposition ~\ref{prop:propertiesofw}. The Turaev polynomial of our potential example would need to be symmetric in the powers of $u$, that is, for each term $ C\, t^a q^b u^k$ in the polynomial, the term $ C\, t^a q^b u^{-k}$ must also be included. We start from the simplest case, where the nonzero powers of $u$ contained in each monomial of $T_K(q,u)$ are only $u^2$, and $u^{-2}$. This condition imposes the following restrictions on the knotoid diagrams of interest: (1) any choice of shortcut $\alpha$ must have at least 2 intersections with the knotoid $K$, and (2) at least 2 of these intersections between $K$ and $\alpha$ must have opposite signs. 
\begin{figure}[hbt!]
    \centering
    \begin{subfigure}[b]{0.32\textwidth}
        \centering
        \includegraphics[width=\textwidth]{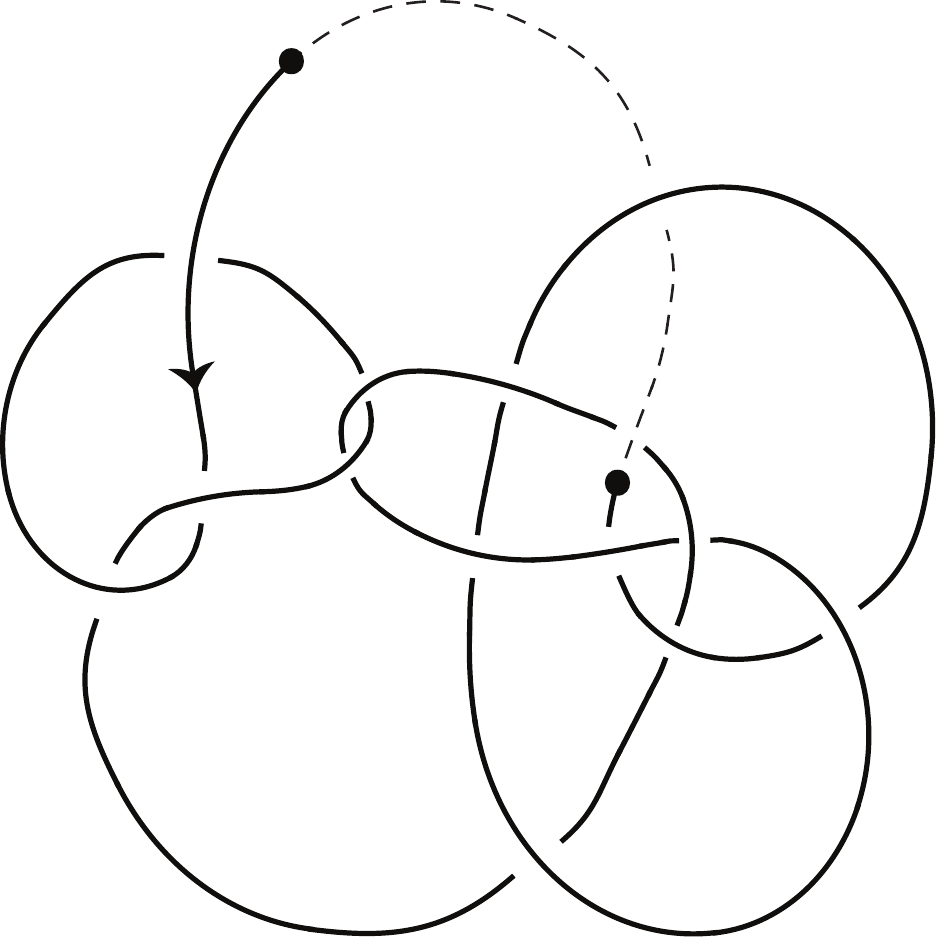}
        \caption{\(K_7\)}
    \end{subfigure}
    \hfill
    \begin{subfigure}[b]{0.32\textwidth}
        \centering
        \includegraphics[width=\textwidth]{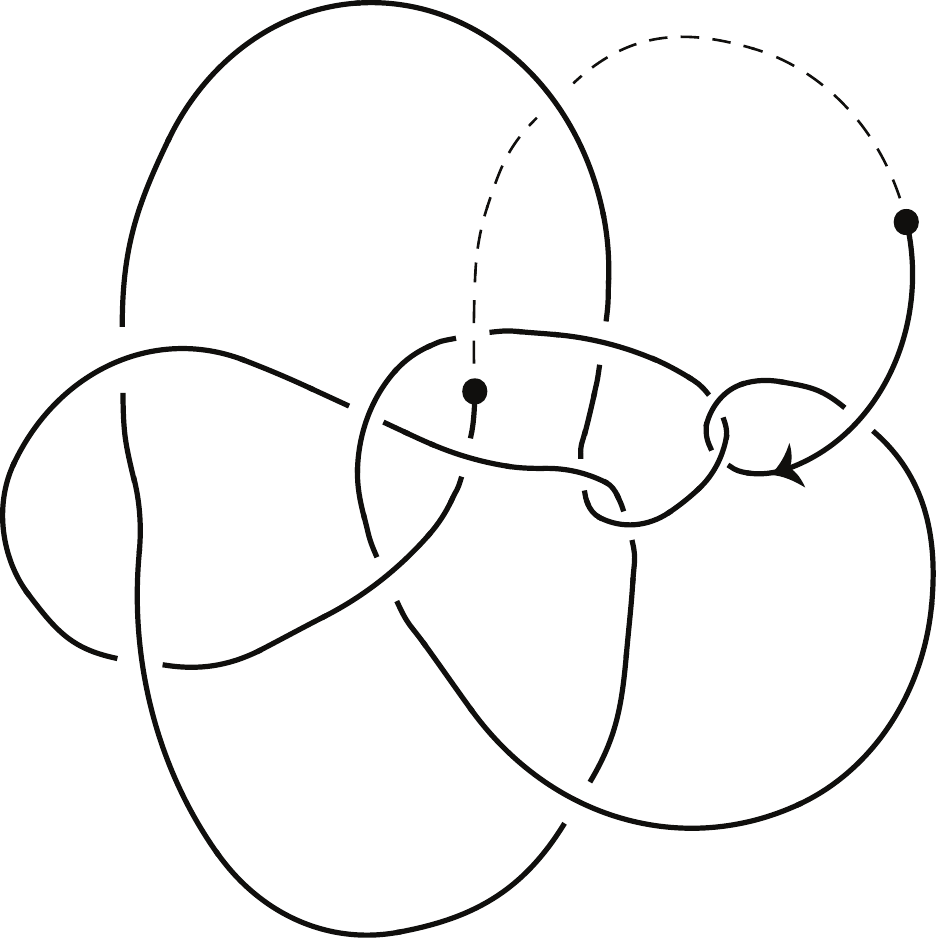}
        \caption{\(K_8\)}
    \end{subfigure}
    \hfill
    \begin{subfigure}[b]{0.32\textwidth}
        \centering
        \includegraphics[width=\textwidth]{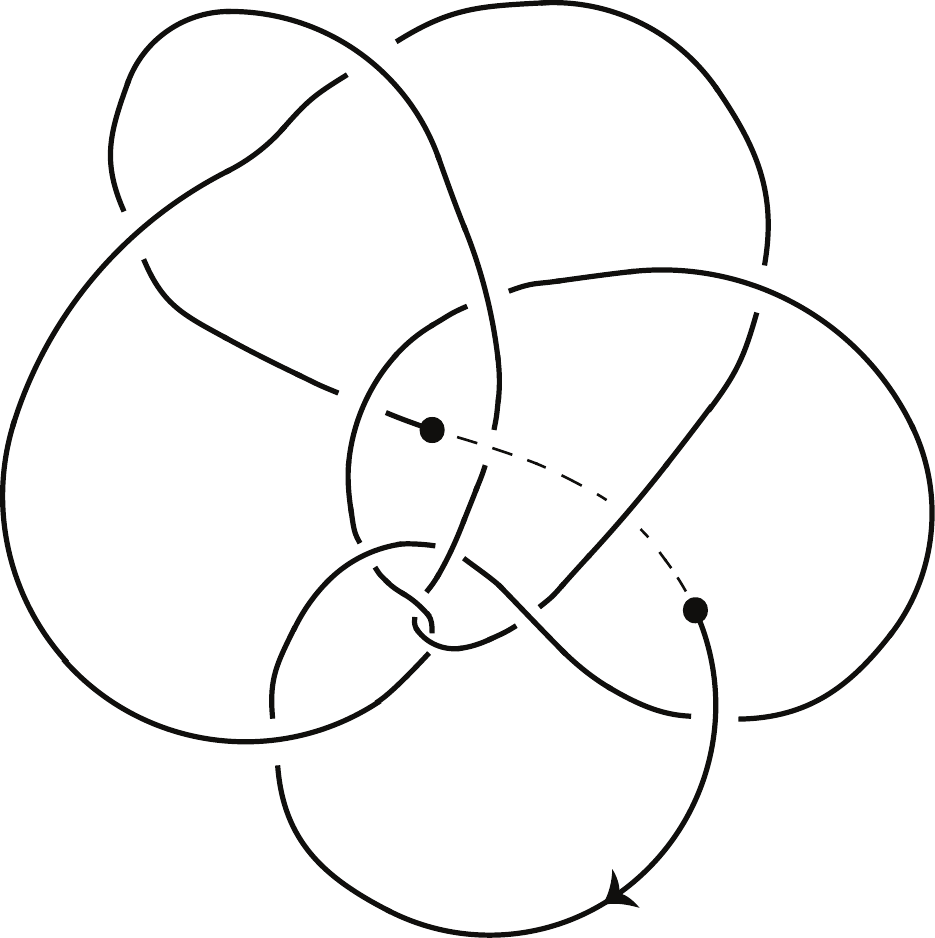}
        \caption{\(K_9\)}
    \end{subfigure}

    \caption{The knotoids $K_7$, $K_8$, $K_9$ are obtained from the knots $14^n_{3532}$, $14^n_{12378}$, $14^n_{22768}$, respectively, by deleting the dashed arcs -- alternatively, applying an $\Omega_-$, and an $\overline{\Omega}_-$ move on the knots. Note that the dashed arcs here are not shortcuts, therefore $K_\pm$'s are not necessarily the original knots for $K=K_7$, $K_8$, $K_9$.}
    \label{fig:k7k8andk9}
\end{figure}
To generate candidate knotoids for the desired example, we considered all knots up to 14 crossings in Hoste-Thistlethwaite table of knots (see ~\cite{HTW}) and applied $\Omega_-$ (or $\overline{\Omega}_-$) move twice on the arc labeled 1 in the PD notation (as recorded in the KnotTheory database) of the knot. In other words, the knot is $\lq\lq$cut" at the arc labeled 1, and the head of the knot(oid) is slid backwards, passing under/over two arcs of the knot. 
\begin{rem}
Note that this operation is not well-defined as the resulting knotoid depends on the choice of the initial arc that is cut, and also, there is no guarantee that the knotoids generated from different knots will be distinct.
\end{rem}
The knotoids that do not satisfy the restrictions above are then eliminated. We calculated the Turaev polynomial of all the candidates, and further eliminated those whose Turaev polynomials are changed under the replacement $u\rightarrow u^{-1}$. Then the winding homology of the remaining knotoids is calculated. At the end of this process, we found three distinct knotoids (see Figure ~\ref{fig:k7k8andk9}) satisfying the desired property. Namely, 
\begin{ex}
For $i=7,8,9$, the knotoid $K_i$ has the same Turaev polynomial and the same Khovanov homology, as $\textnormal{Sym}(\textnormal{Mir}(K_i))$. However, $K_i$ is distinguished from its symmetric mirror by the winding homology.
\end{ex}
More explicitly,
\begin{align*}
W_{K_7}(t,q,u)=& 10 q^2 + \frac{2}{q^2 t^2} + \frac{6}{t} + 9 q^4 t + 5 q^6 t^2 + q^8 t^3 + q^8 t^4 +
  q^{10} t^5  \\
  &+\frac{1}{u^2} \left( \frac{1}{q^3 t^3} + \frac{2}{q t^2} + \frac{q}{t} + q^3 t + 3 q^5 t^2 + 3 q^7 t^3+ q^9 t^4 \right) \\
  &+ u^2 \left( q  + q^3 + \frac{1}{q^3 t^3} + \frac{2}{q t^2} + \frac{2 q}{t} + 2 q^3 t + 3 q^5 t^2 + 3 q^7 t^3 + q^9 t^4 \right),
\end{align*}
so that
\begin{align}
    W_{\textnormal{Sym}(\textnormal{Mir}(K_7))}(t,q,u)&= W_{K_7}(t,q,u^{-1}) \label{equ:K7first} \\
    &\not = W_{K_7}(t,q,u), \nonumber \\ \nonumber \\
    T_{\textnormal{Sym}(\textnormal{Mir}(K_7))}(q,u)
    &= W_{\textnormal{Sym}(\textnormal{Mir}(K_7))}(t=-1,q,u) \label{equ:K7middle} \\ 
    &= W_{K_7}(t=-1,q,u^{-1}) \nonumber \\
    &= W_{K_7}(t=-1,q,u) \nonumber \\
    &=T_{K_7}(q,u), \nonumber 
\end{align}
and
\begin{align}
    Kh_{\textnormal{Sym}(\textnormal{Mir}(K_7))}(t,q) &= W_{\textnormal{Sym}(\textnormal{Mir}(K_7))}(t,q,u=1) \label{equ:K7last} \\
    &=  W_{K_7}(t,q,u=1) \nonumber \\
    &= Kh_{K_7}(t,q). \nonumber
\end{align}
Similarly, we have
\begin{align*}
    W_{K_8}(t,q,u)=& 12 + \frac{1}{q^{10} t^6} + \frac{3}{q^8 t^5} + \frac{6}{q^6 t^4} + \frac{6}{q^4 t^3} + \frac{2}{q^4 t^2} + \frac{3}{q^2 t^2} \\
    &+ \frac{1}{t} + \frac{6}{q^2 t} + 14 q^2 t + 10 q^4 t^2 + 4 q^6 t^3 + q^8 t^4 \\
    &+\frac{1}{u^2} \left( 6 q + \frac{1}{q^7 t^4} + \frac{4}{q^5 t^3} + \frac{8}{q^3 t^2} + \frac{9}{q t}
    + 3 q^3 t + q^3 t^2 + q^5 t^2 + q^5 t^3 \right)\\
    &+ u^2 \left( 6 q  + \frac{1}{q^7 t^4} + \frac{4}{q^5 t^3} + \frac{8}{q^3 t^2} + \frac{9}{q t} + 2 q^3 t \right),
\end{align*}
and
\begin{align*}
W_{K_9}(t,q,u)=& 11 + \frac{1}{q^8 t^4} + \frac{4}{q^6 t^3} + \frac{10}{q^4 t^2} + \frac{13}{q^2 t} + t + 
 6 q^2 t  \\
 &+ 3 q^2 t^2 + 2 q^4 t^2 + 5 q^4 t^3 + 5 q^6 t^4 + 3 q^8 t^5 + q^{10} t^6 \\
 &+ \frac{1}{u^2} \left( \frac{6}{q} + \frac{1}{q^5 t^3} + \frac{1}{q^5 t^2}+ \frac{1}{q^3 t^2}+ \frac{3}{q^3 t}
 + 8 q t + 7 q^3 t^2 + 4 q^5 t^3 + q^7 t^4 \right) \\
 &+u^2 \left( \frac{6}{q} + \frac{2}{q^3 t} + 8 q t + 7 q^3 t^2 + 4 q^5 t^3 + q^7 t^4 \right),
\end{align*}
and the equations ~\eqref{equ:K7first},~\eqref{equ:K7middle},~\eqref{equ:K7last} also hold for $K_8$ and $K_9$.


\section{Refined invariants for knotoids in \texorpdfstring{$\R^2$}{}} \label{sec:r2knotoids}
As shown in Section 10 of \cite{Turaev}, the inclusion map $\R^2 \hookrightarrow S^2$ induces a map from the set of knotoids in $\R^2$ into the set of knotoids in $S^2$, which is surjective but not injective. In fact, the number of knotoids in $\R^2$ is much bigger than the number of knotoids in $S^2$ with the same crossing number; see ~\cite{GDS}. Thus, an increase in the number of variables of the polynomials for knotoids in $\R^2$ is expected.

In this section, we introduce the winding potential function of a smooth, closed, oriented curve in $\R^2$. This function is well-defined on $\R^2$, and we use it to refine two different Turaev polynomials for knotoids in $\R^2$.

\subsection{Winding potential function and the algebraic intersection numbers}
Let $K$ be a knotoid in $\R^2$, and $\alpha$ be a shortcut oriented from the leg to the head of $K$. Without loss of generality, we assume that $\gamma=K\cup \alpha^r$ is an oriented, immersed, closed curve with no multiple points besides double points, where $\alpha^r$ is $\alpha$ with reversed orientation. For a point $p\in \R^2-\gamma$, suppose that $(r_p(t),\theta_p(t))$, for $0\leq t\leq 1$, is a parametrization of $\gamma$ in polar coordinates based at $p$. Then the winding number at $p$ is defined by
\begin{equation}
    w_\gamma(p)=\frac{\theta_p(1)-\theta_p(0)}{2\pi}\in \Z.
\end{equation}
We extend this definition to all points on $\R^2$ as follows. For a point $p\in \gamma$ that is not a double point, using a polar coordinate parametrization based at $p$ again, but this time for $0<t<1$, we define
\begin{equation}
    w_\gamma(p)=\frac{\underset{t\rightarrow 1-}{\lim}\theta_p(t)-\underset{t\rightarrow 0+}{\lim}\theta_p(t)}{2\pi}\in\Z+\frac{1}{2}.
\end{equation}
which is a half integer that is the average of the values of $w_\gamma$ on the regions to the left and right of $\gamma$ near $p$. At a double point, we set $w_\gamma$ as the average of the values of $w_\gamma$ at four neighboring regions, or equivalently as the average of the values on two consecutive arcs before and after the double point. The extended map $w_\gamma:\R^2\rightarrow\frac{1}{2}\Z$ is called \emph{the winding potential function} of $\gamma$. The word $\lq\lq$potential" is used, because the extended winding function is a scalar potential in the sense that the difference in the values of the winding function at two points does not depend on the path connecting the points. In particular, the difference in winding potentials at the head, and the leg of the knotoid is given by the algebraic intersection numbers as follows.
\begin{lem}
Let $H$ and $L$ be the head and leg of $K\subset \R^2$, respectively. Then $w_\gamma(H)-w_\gamma(L)=\alpha\cdot K$.
\end{lem}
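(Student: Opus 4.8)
The plan is to derive the identity from the property of the winding potential recalled just before the lemma, combined with the classical fact that the winding number of an oriented immersed closed curve jumps by exactly $\pm1$ across the curve. Concretely: along any path in $\R^2$ meeting $\gamma$ transversally in the interiors of its arcs, the value of $w_\gamma$ changes by $+1$ at a right-to-left crossing of $\gamma$ and by $-1$ at a left-to-right crossing, and (since $\R^2$ is simply connected, so that closed loops have zero algebraic intersection with $\gamma$) the net change between the endpoints of the path depends only on the endpoints, not on the path. Granting this, for any path $\beta$ from $L$ to $H$ one has $w_\gamma(H)-w_\gamma(L)=\beta\cdot\gamma$, the algebraic intersection number, taken with the sign convention that matches the definition of $\alpha\cdot K$.

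With this reduction, I would take $\beta$ to be the arc obtained from the shortcut $\alpha$ by a small normal pushoff to one side, keeping the endpoints at $L$ and $H$, and perturbed into general position with respect to $\gamma=K\cup\alpha^r$. Then $\beta\cdot\gamma=\beta\cdot K+\beta\cdot\alpha^r$. Since a sufficiently small one-sided pushoff of a generic immersed arc is disjoint from the arc, $\beta$ is disjoint from $\alpha$ in its interior, so $\beta\cdot\alpha^r=0$; reversing the orientation of $\alpha$ does not affect an empty intersection. On the other hand, each transverse interior crossing of $\alpha$ with $K$ persists as exactly one nearby transverse crossing of $\beta$ with $K$, carrying the same local sign, and $\beta$ acquires no further crossings with $K$, so $\beta\cdot K=\alpha\cdot K$. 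Combining the two computations gives $w_\gamma(H)-w_\gamma(L)=\alpha\cdot K$.

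The delicate point, which I expect to be the main obstacle, is the behaviour at the endpoints $L$ and $H$: these lie on $\gamma$ (they are the endpoints of $K\subset\gamma$), so $w_\gamma$ takes half-integer values there, and for a path whose endpoints sit on $\gamma$ the formula $w_\gamma(H)-w_\gamma(L)=\beta\cdot\gamma$ must be read with care. I would handle this using the immersed-curve hypothesis on $\gamma$: because $\alpha^r$ is oriented-tangent to $K$ at both $L$ and $H$, the set $K\cup\alpha$ is locally a smooth arc near each of $L$ and $H$ and coincides with $\gamma$ there, so the two regions of $\R^2\setminus\gamma$ abutting $L$ (respectively $H$) are exactly the two sides of this local arc, and the averages defining $w_\gamma(L)$ and $w_\gamma(H)$ interact with the pushoff $\beta$ precisely as needed. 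To sidestep the half-integer bookkeeping entirely, one may instead write $w_\gamma(H)-w_\gamma(L)$ as the average, over the $2\times2$ choices of a region adjacent to $\gamma$ at $H$ and a region adjacent to $\gamma$ at $L$, of the corresponding difference of (integer) winding numbers, compute each such difference by an honest region-to-region path realized as a pushoff of $\alpha$ to the appropriate side, and observe that every one of the four paths has the same intersection data with $K$ as $\alpha$ and empty intersection with $\alpha^r$; the average is then again $\alpha\cdot K$.
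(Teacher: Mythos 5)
Your overall strategy --- path-independence of the winding potential together with a signed count of crossings along a path from $L$ to $H$ --- is the same as the paper's; the paper simply moves the point along $\alpha$ itself, so that the half-integer values of $w_\gamma$ on $\gamma$ are used directly and no pushoff is needed. Two of your specific steps fail as written, however. First, the shortcut $\alpha$ is only assumed to be \emph{immersed} (the paper's own proof explicitly invokes a cancellation at the self-intersections of $\alpha$), and a small one-sided pushoff of an immersed arc is \emph{not} disjoint from the arc: near each double point of $\alpha$ with branches $a$ and $b$, the pushoff meets $\alpha$ in two points, since the pushoff of $a$ must cross $b$ and the pushoff of $b$ must cross $a$. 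Your conclusion $\beta\cdot\alpha^r=0$ is still correct, but only because these two local intersections carry opposite signs --- which is precisely the cancellation the paper uses --- not because the intersection is empty. As stated, your argument covers only embedded shortcuts, which is less than the lemma asserts.

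Second, the endpoint repair via the $2\times 2$ average is not right as stated. Since $\gamma=K\cup\alpha^r$ is smooth at $L$ and at $H$, the tangent of $\alpha$ at each endpoint is opposite to that of $\gamma$ there, so the right-hand pushoff of $\alpha$ joins the region to the \emph{left} of $\gamma$ at $L$ to the region to the left of $\gamma$ at $H$, and the left-hand pushoff joins the two right-hand regions; the two mixed region pairs are not realized by one-sided pushoffs at all. A path realizing a mixed pair must cross $\gamma$ once more (or once fewer) than $\alpha\cdot K$, so two of the four differences equal $\alpha\cdot K\pm1$ rather than $\alpha\cdot K$. Their average is still $\alpha\cdot K$, because the extra $+1$ and $-1$ cancel, so the lemma survives --- but not for the reason you give, and the claim that all four paths have ``the same intersection data'' is false. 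The cleanest repair is the paper's: track the half-integer value $w_\gamma(p)$ as $p$ traverses $\alpha$ from $L$ to $H$; each transverse crossing of $K$ changes it by the sign of that crossing, and the two passes through each self-intersection of $\alpha$ cancel, giving $w_\gamma(H)-w_\gamma(L)=\alpha\cdot K$ directly.
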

\begin{proof}
As a point $p$ moves along $\alpha$ from $L$ to $H$, the value of $w_\gamma(p)$ increases by 1, when $\alpha$ crosses $\gamma$ from right to left, and decreases by 1 when $\alpha$ crosses $\gamma$ from left to right. Since the changes at self intersections of $\alpha$ cancel each other, the total change in $w_\gamma$ is equal to $\alpha\cdot K$.
\end{proof}

Similarly, for the oriented, immersed closed curve $\gamma_s=k_s\cup \alpha^r$, we have $w_{\gamma_s}(H)-w_{\gamma_s}(L)=\alpha\cdot k_s$. Viewing the values $w_\gamma(H)$ and $w_\gamma(L)$ as normalization terms, we can write the $u$-grading of a state as the difference in winding potentials at the leg and at the head of the knotoid:
\begin{align}\label{equ:windingons2}
    k_s\cdot \alpha - K\cdot \alpha &= -(w_{\gamma_s}(H)-w_{\gamma_s}(L))+(w_\gamma(H)-w_\gamma(L)) \\
    &= (w_{\gamma_s}(L)-w_\gamma(L))-(w_{\gamma_s}(H)-w_\gamma(H)). \nonumber
\end{align}

\subsection{Refinement of the Turaev polynomial for knotoids in \texorpdfstring{$\R^2$}{}}

Note that the winding potential function $w_\gamma$ is not well-defined on $S^2$ without a choice of region in $S^2-\gamma$ where $\infty$ is placed. Nevertheless, the value of ~\eqref{equ:windingons2} is well-defined and is the same for all such choices on $S^2$. On $\R^2$, one is not restricted to take the difference as in ~\eqref{equ:windingons2} to get a well-defined quantity. We will instead use the pair
\begin{equation}\label{equ:sepwindingpair}
    ( w_{\gamma_s}(L)-w_\gamma(L) , w_{\gamma_s}(H)-w_\gamma(H) )
\end{equation}
to give a refined version, for knotoids in $\R^2$, of the Turaev polynomial $T_K(A,u)$ by
\begin{align}\label{equ:anglebracketforr2}
    \accentset{*}{T}_K (A,\ell,h)=\,&(-A^3)^{-wr(K)}\ell\,^{-w_\gamma(L)}h^{-w_\gamma(H)}\times\\
    &\times \sum_{s\in S(K)}A^{\sigma_s}\ell\,^{w_{\gamma_s}(L)}h^{w_{\gamma_s}(H)}(-A^2-A^{-2})^{|s|-1} \in \Z[A^{\pm 1},\ell\,^{\pm 1},h^{\pm 1}]. \nonumber
\end{align}
The choice of a shortcut does not change the values in the pair ~\eqref{equ:sepwindingpair}. Denoting only the summation part in ~\eqref{equ:anglebracketforr2} by $\langle\!\langle K, \alpha \rangle\!\rangle_*$ (as the summation term depends on the choice of $\alpha$ without the normalizing factors in front), we have the usual skein relation $\langle\!\langle \,\raisebox{-0.05cm}{\includegraphics[scale=0.1]{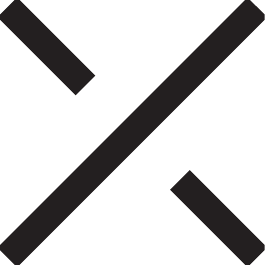}}\,,\alpha \rangle\!\rangle_*=A\langle\!\langle \,\LeftSoft\,\RightSoft\,,\alpha \rangle\!\rangle _*+A^{-1}\langle\!\langle \,\raisebox{-0.03cm}{\rotatebox{90}{\(\LeftSoft\,\RightSoft\)}}\,,\alpha \rangle\!\rangle_*$. Since Reidemeister moves are local deformations of the curves $\gamma_s$, $\gamma$ far from the head/leg, the values of the winding potentials remain unchanged at the head/leg. Then the invariance under Reidemeister moves follow from the skein relation. 

For a knotoid $K$ in $\R^2$, and its image under the inclusion $\R^2\hookrightarrow S^2$ (also denoted by $K$ by abuse of notation), the refined polynomial recovers the Turaev polynomial by the substitution
\begin{equation}
    \accentset{*}{T}_K(A,\ell^{\,a} h^b=u^{a-b})=T_K(A,u).
\end{equation}
\begin{figure}[hbt!]
    \centering
    \begin{subfigure}[b]{.21\textwidth}
    \centering
    \includegraphics[scale=0.3]{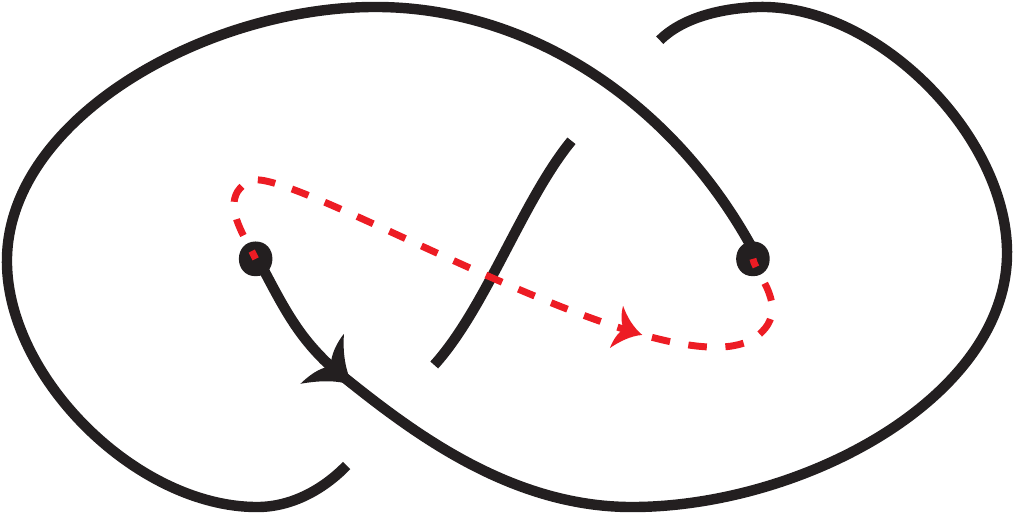}
    \caption{$B_1$}
    \label{fig:bifoil1}
    \end{subfigure}
    \hfill
    \begin{subfigure}[b]{.15\textwidth}
    \centering
    \includegraphics[scale=0.3]{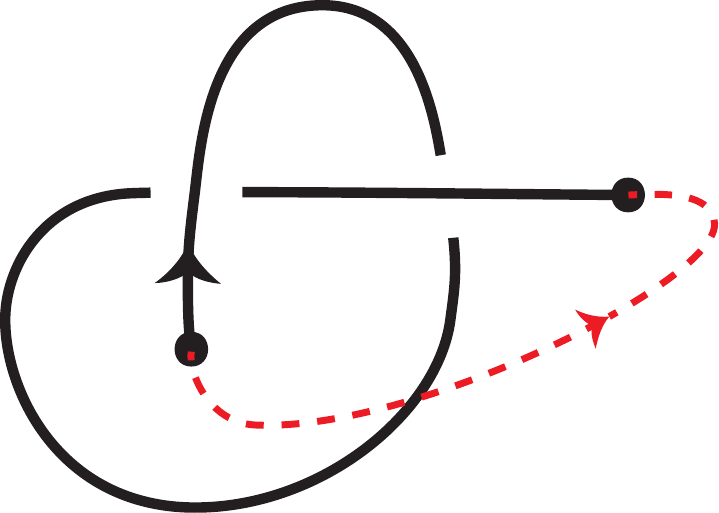}
    \caption{$B_2$}
    \label{fig:bifoil2}
    \end{subfigure}
    \hfill
    \begin{subfigure}[b]{.15\textwidth}
    \centering
    \includegraphics[scale=0.3]{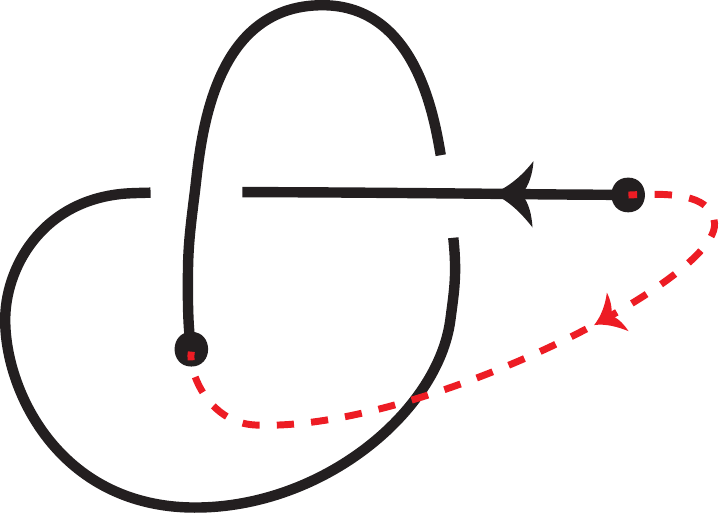}
    \caption{$\textnormal{Rev}(B_2)$}
    \label{fig:bifoil2rev}
    \end{subfigure}
    \hfill
    \begin{subfigure}[b]{.15\textwidth}
    \centering
    \includegraphics[scale=0.3]{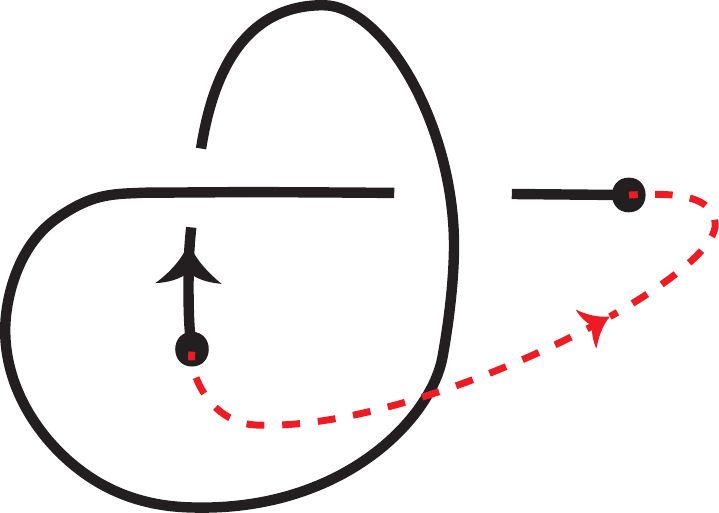}
    \caption{$\textnormal{Mir}(B_2)$}
    \label{fig:bifoil2mir}
    \end{subfigure}
    \hfill
    \begin{subfigure}[b]{.15\textwidth}
    \centering
    \includegraphics[scale=0.3]{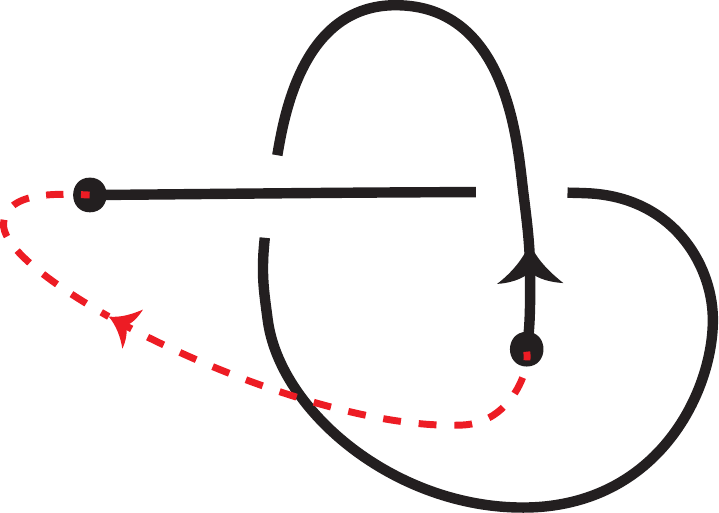}
    \caption{$\textnormal{Sym}(B_2)$}
    \label{fig:bifoil2sym}
    \end{subfigure}
    \caption{Two bifoils $B_1$, $B_2$ with their shortcuts. Reverse, mirror image, and symmetric reflection of $B_2$, respectively. These three elementary involutions of knotoids commute with each other.}
    \label{fig:bifoils}
\end{figure}For example, two knotoids $B_1, B_2\subset \R^2$ in Figure ~\ref{fig:bifoils} are distinguished by their
refined polynomials
\begin{align}
    \accentset{*}{T}_{B_1}&=(-A^{10}-A^6)\,\ell \, h^{-1}+A^6\,h^{-2}+A^6\,\ell^{\,2}+A^4,\\
    \accentset{*}{T}_{B_2}&=(-A^{10}+A^6)\,\ell^{\,2}+A^4,
\end{align}
whereas the substitution $\ell^{\,a}h^b=u^{a-b}$ gives
\begin{equation}
    T_{B_1} = (-A^{10}+A^6)\,u^2+A^4 =T_{B_2}.
\end{equation}
In fact, it is easy to see geometrically that $B_1$ and $B_2$ are equivalent in $S^2$.

\begin{rem}\label{rem:riemannstates}
There is a more intuitive way to think about the values of the winding potential function for a knotoid $K$, and its shortcut $\alpha$. Consider $K$ as a diagram on the complex plane $\CP$ instead of $\R^2$ (or $S^2$). Then the shortcut $\alpha$ can be viewed as a branch cut in $\CP$, along which multiple copies of $\CP-a$ are glued together to form a Riemann surface. For example, the branch cut $(-1,1)\subset \CP$ is used to construct a Riemann surface on which the multi-valued function $\sec^{-1}(z)$ is defined. For our purposes, we will use a Riemann surface $\Sigma$ with infinitely many sheets that are visualized to be stacked vertically. Then the knotoid $K$ and all its states $k_s$ are smooth, oriented curves in $\Sigma$. The number $k_s\cdot \alpha = w_{\gamma_s}(L)-w_{\gamma_s}(H)$ (resp. $K\cdot \alpha = w_\gamma(L)-w_\gamma(H)$) gives the \lq\lq change of level" on the sheets of $\Sigma$ when $k_s$ (resp. $K$) is traced from leg to head. Assuming that $k_s$ and $K$ start on the same sheet of $\Sigma$ at the leg, the difference $k_s\cdot \alpha-K\cdot \alpha$ gives the \lq\lq normalized" change of level when $k_s$ finishes at the head. This quantity is intrinsic to the state $s$ in the sense that it does not depend on the choice of the branch cut in $\CP$, or the choice of diagram for $K$ in $S^2$ to be projected onto $\R^2$ . Now, the values $(w_{\gamma_s}(L)-w_\gamma(L))$, and $(w_{\gamma_s}(H)-w_\gamma(H))$ give the normalized winding numbers of $k_s$ around each pole $L$, and $H$, respectively. The equation ~\eqref{equ:windingons2} then has the natural interpretation that a positive normalized winding of $k_s$ around the leg increases the level in $\Sigma$ by 1, whereas a positive normalized winding around the head decreases the level by 1.
\end{rem}
In Section 10 of ~\cite{Turaev}, Turaev defines a 3-variable polynomial $[\,\,\cdot\,\,]_\circ\in \Z[A^{\pm 1},B,u^{\pm 1}]$ for knotoids in $\R^2$ by
\begin{equation}
    [K]_\circ=(-A^3)^{-wr(K)}u^{K\cdot \alpha}\sum_{s\in S(K)}A^{\sigma_s}u^{k_s\cdot \alpha}(-A^2-A^{-2})^{e_s}B^{f_s},
\end{equation}
where $e_s$ (resp. $f_s$) are the number of closed components not surrounding (resp. surrounding) $k_s$, such that $e_s+f_s=|s|-1$. The refinement above, namely the separation of the normalized winding numbers around each pole, can also be carried out for the polynomial $[K]_\circ$ to get a 4-variable polynomial $[K]_\bullet$ as follows
\begin{align}\label{equ:sqbracketforr2}
    [K]_\bullet(A,B,\ell,h)=&(-A^3)^{-wr(K)} \ell\,^{-w_\gamma(L)}h^{-w_\gamma(H)}\times \\
    &\times \sum_{s\in S(K)}A^{\sigma_s}\ell\,^{w_{\gamma_s}(L)}h^{w_{\gamma_s}(H)}(-A^2-A^{-2})^{e_s}B^{f_s}. \nonumber
\end{align}
Independence of $[K]_\bullet\in \Z[A^{\pm 1},B,\ell^{\,\pm 1},h^{\pm 1}]$ from the choice of the shortcut $\alpha$, and invariance under Reidemeister moves are proved similarly as in the case of $\accentset{*}{T}_K$.

\begin{rem}
It is natural to ask whether it is possible to \lq\lq split" the winding information around the head/leg of knotoids to obtain refined invariants in $S^2$ as we did in this section for knotoids in $\R^2$. The first difficulty one encounters is that the winding numbers are not well-defined on $S^2$, so we work with stereographic projection of knotoids in $S^2$ onto $\R^2$. However, a knotoid in $S^2$ may have non-equivalent projections as knotoids in $\R^2$. In ~\cite{KutluayThesis}, we overcome this difficulty by normalizing the refined Turaev polynomial $\accentset{*}{T}$ of knotoids in $\R^2$ using turning numbers, to obtain a well-defined polynomial invariant on $S^2$. Initially, this invariant and its categorification seem to provide refinements of the Turaev polynomial, and the winding homology, respectively. It turns out, however, that the newly obtained invariants are equivalent to the original invariants, as proved in ~\cite{KutluayThesis}, and thus they only give reformulations of the Turaev polynomial, and the winding homology.
\end{rem}

\end{document}